\documentclass[12pt]{article}
\usepackage{geometry}                
\geometry{letterpaper}                   
\usepackage{graphicx}
\usepackage{amssymb}
\usepackage{amsmath}
\usepackage{mathtools}
\usepackage{enumerate}
\usepackage{mathrsfs}
\usepackage{xcolor}
\usepackage{amsthm}
\usepackage{hyperref}
\DeclareGraphicsRule{.tif}{png}{.png}{`convert #1 `dirname #1`/`basename #1 .tif`.png}

\usepackage[style=alphabetic]{biblatex}
\addbibresource{refs.bib}

\AtEveryBibitem{
    \clearfield{urlyear}
    \clearfield{urlmonth}
    \clearfield{isbn}
    \clearfield{issn}
    \clearfield{pagetotal}
}

\newcommand*{\FF}{\mathbb{F}}
\newcommand*{\NN}{\mathbb{N}}

\newcommand*{\RR}{\mathbb{R}}
\newcommand*{\CC}{\mathbb{C}}

\newcommand*{\PP}{\mathbb{P}}
\DeclareMathOperator*{\EE}{\mathbb{E}}

\newcommand*{\calE}{\mathcal{E}}
\newcommand*{\calF}{\mathcal{F}}

\newcommand*{\calO}{\mathcal{O}}
\newcommand*{\calU}{\mathcal{U}}

\newcommand*{\A}{\mathtt{A}}
\newcommand*{\ta}{\mathtt{a}}
\newcommand*{\B}{\mathtt{B}}
\newcommand*{\tb}{\mathtt{b}}

\newcommand*{\1}{\mathbf{1}}

\newcommand*{\mb}[1]{\mathbf{#1}}

\newcommand*{\st}{\,:\,}

\newcommand*{\ball}[3][\relax]{\mathrm{B}^{#1}(#2, #3)}

\newtheorem{lemma}{Lemma}[section]
\newtheorem{cor}[lemma]{Corollary}
\newtheorem{prop}[lemma]{Proposition}
\newtheorem{theorem}[lemma]{Theorem}


\theoremstyle{definition}

\DeclareMathOperator{\Prob}{Prob}
\DeclareMathOperator{\Var}{Var}

\newcommand*{\unif}{\mathrm{up}}

\DeclareMathOperator{\Hom}{Hom}

\DeclareMathOperator{\Sym}{Sym}

\DeclareMathOperator{\vspan}{span}
\DeclareMathOperator{\Lip}{Lip}

\DeclareMathOperator{\diam}{diam}
\DeclareMathOperator{\shent}{H}
\DeclareMathOperator{\h}{h}

\DeclareMathOperator{\f}{f}

\DeclareMathOperator{\press}{p}

\DeclareMathOperator{\acoth}{arccoth}

\DeclarePairedDelimiter{\abs}{\lvert}{\rvert}

\DeclarePairedDelimiter{\norm}{\|}{\|}
\newcommand{\nnorm}[1]{{\left\vert\kern-0.25ex\left\vert\kern-0.25ex\left\vert #1 
    \right\vert\kern-0.25ex\right\vert\kern-0.25ex\right\vert}}
\DeclarePairedDelimiterX{\inprod}[2]{\langle}{\rangle}{#1,\ #2}

\newcommand*{\sofic}{\mathrm{sofic}}
\newcommand*{\ann}{\mathrm{ann}}

\title{Typical sofic entropy and local limits for free group shift systems}
\author{Christopher Shriver}

\begin{document}
\maketitle

\begin{abstract}
	We show that for any invariant measure $\mu$ on a free group shift system, there are two numbers $h^\flat \leq h^\sharp \in \{-\infty\} \cup [0, \log \abs{\A}]$ which in some sense are the typical upper and lower sofic entropy values. We also give a condition under which $h^\flat = h^\sharp = \f(\mu)$. This can be used to compute typical local limits of finitary Gibbs states over sequences of random regular graphs. As examples, we work out typical local limits of the Ising and Potts models.
	
	We also show that, for Markov chains, the Kesten--Stigum second-eigenvalue reconstruction criterion actually implies there are no good models over a typical sofic approximation (i.e. $h^\sharp = -\infty$). In particular, we have an exact value for the typical entropy $h^\flat = h^\sharp$ of the free-boundary Ising state: it is equal to the annealed entropy $\f$ for interaction strengths up to the reconstruction threshold, after which it drops abruptly to $-\infty$.
\end{abstract}

\tableofcontents

\section{Introduction}

Throughout, let $\A$ be a finite set (an ``alphabet''), $\Gamma$ be the free group of rank $r$, and $\mu \in \Prob^\Gamma(\A^\Gamma)$ be a shift-invariant measure. The shift action is defined precisely in Section~\ref{sec:definitions}. We will usually identify $\Gamma$ with its left Cayley graph, which is a $2r$-regular tree.

In this paper, we consider the question of whether a typical random $2r$-regular graph, having geometry locally like $\Gamma$, admits vertex labelings whose average local statistics are close to $\mu$. We also investigate how many such labelings a typical graph has and, as an application, work out some typical local limits of Gibbs states on random regular graphs.

Before stating the main results, we give a brief introduction to sofic entropy. See Section~\ref{sec:definitions} below for more precise definitions.

A sofic approximation to $\Gamma$ is a sequence $\Sigma$ of $2r$-regular graphs which converge locally to the Cayley graph of $\Gamma$. These graphs may be random or deterministic. In the deterministic case, $\overline{\h}_\Sigma(\mu)$ is the upper exponential growth rate of the number of vertex labelings of these graphs whose average local statistics are consistent with $\mu$. We call these labelings \emph{microstates} or \emph{good models} for $\mu$. If the graphs are random, then $\overline{\h}_\Sigma$ refers to the upper exponential growth rate of the \emph{expected} number of such labelings. Similarly, $\underline{\h}_\Sigma$ denotes the lower exponential growth rate. Sofic entropy was introduced by Lewis Bowen as an isomorphism invariant which distinguishes between Bernoulli shifts (iid processes) over nonamenable groups \cite{bowen2010a}.

An important special case is when the random graphs are drawn from the uniform permutation model $\PP^\unif$: given a finite set $V$, pick $r$ permutations in $\Sym(V)$ uniformly and independently. Then produce a graph with vertex set $V$ by connecting each $v \in V$ to its images under the $r$ permutations. This tuple of $r$ permutations can also be thought of as an element of $\Hom(\Gamma, \Sym(V))$, where the $i$th generator of $\Gamma$ acts on $V$ by the $i$th permutation. If $\Sigma$ is this sequence of random graphs, $\overline{\h}_\Sigma = \underline{\h}_\Sigma$ is called the $\f$ invariant \cite{bowen2010c, bowen2010}, which we will denote by $\f(\cdot)$.

We will state some results for a more general kind of random sofic approximation which we call ``exponentially concentrated random sofic approximations'' and call the associated upper and lower ``annealed'' entropies $\overline{\h}^\ann(\cdot)$ and $\underline{\h}^\ann(\cdot)$. See Section~\ref{sec:ecrsa} for definitions. In the present paper we will only establish that the uniform permutation model has this property, but the stochastic block models introduced in \cite{shriver2022b} are other likely candidates.

\subsection{Typical sofic entropy values}

We say that $\mu$ satisfies the \emph{second-moment criterion} for a random sofic approximation if for every joining $\lambda$ of $\mu$ with itself we have $\overline{\h}^\ann(\lambda) \leq 2 \underline{\h}^\ann(\mu)$; recall that a joining is a shift-invariant coupling.

In the case of the uniform permutation model, an equivalent formulation is that the product $\mu \times \mu$ has maximal $\f$ invariant among all self-joinings of $\mu$. Note that we do not require the product to be the \emph{only} joining with maximal $\f$. For Gibbs measures of nearest-neighbor interactions with no hard constraints, the second-moment criterion is implied by non-reconstruction \cite[Corollary 16]{shriver2021}. \\


In the uniform case, Proposition 4.1 of \cite{shriver2023a} states that if $\mu$ satisfies the second-moment criterion then $\underline{\h}_\Sigma(\mu) = \overline{\h}_\Sigma(\mu) = \f(\mu)$ for a ``typical'' sofic approximation $\Sigma$ to $\Gamma$. In general, when we say a property holds for a typical sofic approximation, we mean that there is a sequence sets of homomorphisms $\{ S_n \subset \Hom(\Gamma, \Sym(n))\}_{n=1}^\infty$ such that $\PP^\unif(S_n) \to 1$ and if $\sigma_n\in S_n$ for each $n$ then $\Sigma = \{\sigma_n\}_{n=1}^\infty$ is a sofic approximation with that property.

This terminology somewhat obscures the order of quantifiers. In \cite{shriver2023a}, the possibility of the sequence $\{S_n\}_{n=1}^\infty$ depending on $\mu$ is not ruled out.

The first result of the present paper is a new version which is stronger in a few ways: it shows that $\{S_n\}_{n=1}^\infty$ can be chosen independent of $\mu$. Second, it shows there are typical upper and lower sofic entropy values for $\mu$ even if $\mu$ does not satisfy the second-moment criterion.

We also allow any random sofic approximation which is exponentially concentrated; see Section~\ref{sec:ecrsa} for definitions. Note, however, that we only verify this property for the uniform permutation model $\PP^\unif$.

\begin{theorem}
\label{thm:main}
	Let $\PP$ denote an exponentially concentrated random sofic approximation to $\Gamma$ and let $\A$ be a finite alphabet. Then there exist upper semicontinuous functions $h^\sharp,h^\flat \colon \Prob^\Gamma(\A^\Gamma) \to \{-\infty\} \cup [0, \log\abs{\A}]$ with the following property:
	
	There exists a sequence $\{S_n \subset \Hom(\Gamma, \Sym(n)\}_{n=1}^\infty$ such that $\PP(S_n) \to 1$ and if $\sigma_n \in S_n$ for each $n$ then $\Sigma = \{\sigma_n\}_{n=1}^\infty$ is a sofic approximation, and for all $\mu \in \Prob^\Gamma(\A^\Gamma)$ we have
		\[ \overline\h_\Sigma(\mu) = h^\sharp(\mu) \]
	and
		\[ \underline\h_\Sigma(\mu) = h^\flat(\mu) . \]
\end{theorem}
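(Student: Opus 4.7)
The plan is to define $h^\sharp$ and $h^\flat$ directly from the annealed good-model counts, then use the exponential concentration hypothesis together with a diagonalization over a countable neighborhood base to produce a single sequence $S_n$ that works uniformly for all $\mu$.

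For each open $\calO \subset \Prob^\Gamma(\A^\Gamma)$ and $\sigma \in \Hom(\Gamma, \Sym(n))$, let $\Omega(\sigma, \calO)$ denote the set of good models over $\sigma$ whose empirical distribution lies in $\calO$. With the convention $\log 0 := -\infty$, define
\[
h^\sharp(\mu) = \inf_{\calO \ni \mu} \limsup_{n \to \infty} \tfrac{1}{n} \log \EE_\PP |\Omega(\sigma_n, \calO)|, \qquad h^\flat(\mu) = \inf_{\calO \ni \mu} \liminf_{n \to \infty} \tfrac{1}{n} \log \EE_\PP |\Omega(\sigma_n, \calO)|.
\]
Upper semicontinuity is immediate: for $\mu_n \to \mu$, any $\calO \ni \mu$ eventually contains each $\mu_n$, so $\limsup_n h^\sharp(\mu_n)$ is dominated by the inner $\limsup$ for $\calO$, and taking the infimum over $\calO$ yields $\limsup_n h^\sharp(\mu_n) \leq h^\sharp(\mu)$; the same argument handles $h^\flat$. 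Both functions take values in $\{-\infty\} \cup [0, \log|\A|]$ from the trivial bound $|\Omega(\sigma_n, \calO)| \leq |\A|^n$.

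Next, fix a countable base $\{\calO_k\}_{k \geq 1}$ for the weak-$*$ topology on the compact metrizable space $\Prob^\Gamma(\A^\Gamma)$. Exponential concentration, applied to each $\calO_k$, supplies events $T_n^{(k)} \subset \Hom(\Gamma, \Sym(n))$ with $\PP(T_n^{(k)}) \to 1$ on which $\tfrac{1}{n} \log |\Omega(\sigma_n, \calO_k)| = \tfrac{1}{n} \log \EE_\PP |\Omega(\sigma_n, \calO_k)| + o_n(1)$. A standard diagonalization---pick increasing $n_k$ so that $\PP((T_n^{(j)})^c) < 2^{-j-k}$ for every $j \leq k$ and $n \geq n_k$, set $K(n) = \max\{k : n \geq n_k\}$, and define $S_n = \bigcap_{k \leq K(n)} T_n^{(k)}$, intersected with the locally tree-like event so that any $\sigma_n \in S_n$ yields a genuine sofic approximation to $\Gamma$---then gives $\PP(S_n) \to 1$ and $K(n) \to \infty$.

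Finally, given $\Sigma = (\sigma_n)$ with $\sigma_n \in S_n$, the countable base is cofinal at every $\mu$, so $\overline{\h}_\Sigma(\mu) = \inf_{k \st \calO_k \ni \mu} \limsup_n \tfrac{1}{n} \log |\Omega(\sigma_n, \calO_k)|$; for each such $k$, $K(n) \geq k$ eventually, hence $\sigma_n \in T_n^{(k)}$ eventually and the inner $\limsup$ coincides with the annealed one. Taking the infimum yields $\overline{\h}_\Sigma(\mu) = h^\sharp(\mu)$; the identity $\underline{\h}_\Sigma(\mu) = h^\flat(\mu)$ is the same argument with $\liminf$ in place of $\limsup$. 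The main obstacle is the calibration of the diagonalization---$K(n)$ must grow slowly enough for $\PP(S_n) \to 1$ yet fast enough that every fixed $k$ is eventually captured---together with the verification that for the random models under consideration, $\tfrac{1}{n}\log|\Omega|$ really does concentrate sharply around $\tfrac{1}{n}\log\EE|\Omega|$ (rather than around, say, its median, which a priori could differ by $\Theta(n)$ when $\EE|\Omega|$ is dominated by rare events). A secondary step is justifying that the infimum defining $\overline{\h}_\Sigma(\mu)$ may be restricted to a countable base, which follows from the monotonicity of $|\Omega(\sigma, \cdot)|$ in $\calO$.
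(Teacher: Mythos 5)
Your proposal has a fundamental gap that cannot be repaired: the functions you define are not the typical entropies, they are the \emph{annealed} entropies $\overline{\h}^\ann(\mu)$ and $\underline{\h}^\ann(\mu)$. The paper proves (Lemma~\ref{lem:fupperbound}) that these are only \emph{upper bounds} for $h^\sharp(\mu)$ and $h^\flat(\mu)$, with equality precisely when $\mu$ satisfies the second-moment criterion (Prop.~\ref{prop:secondmomentgeneral}). The inequality is strict in general and in interesting cases: by Theorem~\ref{thm:KSbound} and Corollary~\ref{cor:isingtypical}, the free-boundary Ising state $\mu_\theta$ with $\abs{\theta}^2(2r-1)>1$ has $h^\sharp(\mu_\theta)=-\infty$, whereas its $\f$-invariant (which is $\overline{\h}^\ann=\underline{\h}^\ann$ for the uniform permutation model) is finite and typically positive. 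So the claim $\overline{\h}_\Sigma(\mu) = \inf_{\calO \ni \mu}\limsup_n \tfrac1n\log\EE\abs{\Omega(\sigma_n,\calO)}$ for a typical $\Sigma$ is simply false.

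The obstacle you flag at the end --- whether $\tfrac1n\log\abs{\Omega}$ concentrates around $\tfrac1n\log\EE\abs{\Omega}$ rather than around a potentially much smaller quantity --- is not a gap to be filled but the crux of the problem, and the answer is no: $\EE\abs{\Omega}$ can be dominated by a vanishing-probability tail, so the typical count can be exponentially smaller (even zero). Moreover the exponential concentration hypothesis in the paper gives sub-Gaussian fluctuations only for \emph{Lipschitz} functionals of $\sigma$ under the switching metric, and $\sigma\mapsto\tfrac1n\log\abs{\Omega(\sigma,\calO)}$ is not Lipschitz (it drops to $-\infty$ when $\Omega$ becomes empty). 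The paper circumvents both problems at once by working with $\rho_\mu(\sigma,h)$, the smallest $\varepsilon$ for which $\abs{\Omega(\sigma,\mu,\varepsilon)}\ge e^{nh}$. Lemma~\ref{lem:rhoconcentration} shows this is 2-Lipschitz in $\sigma$, so it concentrates around its mean; $h^\flat$ and $h^\sharp$ are then defined as the thresholds where $\overline\rho_\mu$ and $\underline\rho_\mu$ vanish. This is a genuinely different object from the annealed entropy, and it is what makes the theorem true without a second-moment assumption. Your diagonalization scaffolding over a countable base is in the right spirit and echoes part of the paper's final construction, but the quantity it is diagonalizing over must be replaced.
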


We prove Theorem~\ref{thm:main} in Section~\ref{sec:typicalproof}.

For the uniform permutation model, for every $\mu$ we have $h^\flat(\mu) \leq h^\sharp(\mu) \leq \f(\mu)$ (Lemma~\ref{lem:fupperbound}), and if $\mu$ satisfies the second-moment criterion, then the three quantities are equal (Cor.~\ref{cor:2ndmomentf}).

For more general sequences, since the upper and lower annealed entropies are not always equal, in general we have $h^\flat(\mu) \leq \underline{\h}^\ann(\mu)$ and $h^\sharp(\mu) \leq \overline{\h}^\ann(\mu)$ (Lemma~\ref{lem:fupperbound}), and the second-moment criterion implies $h^\flat(\mu) = \underline{\h}^\ann(\mu)$ and $h^\sharp(\mu) =\overline{\h}^\ann(\mu)$ (Prop.~\ref{prop:secondmomentgeneral}).

\begin{cor}
	For every $\PP$ and $\A$, the typical entropies $h^\sharp$ and $h^\flat$ are isomorphism-invariant.
\end{cor}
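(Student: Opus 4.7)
The plan is to leverage two ingredients: first, the uniformity feature of Theorem~\ref{thm:main}, namely that the sets $S_n$ are chosen independently of $\mu$ and live in the alphabet-free space $\Hom(\Gamma, \Sym(n))$; second, the classical fact that, for any \emph{deterministic} sofic approximation $\Sigma$, the sofic entropies $\overline{\h}_\Sigma$ and $\underline{\h}_\Sigma$ are measure-conjugacy invariants, as established in Bowen's original work. Combining these should give isomorphism-invariance of $h^\sharp$ and $h^\flat$ essentially for free.

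In detail, fix $\PP$ and $\A$, let $\{S_n\}$ be the sequence provided by Theorem~\ref{thm:main}, and pick any $\sigma_n \in S_n$ for each $n$. The resulting $\Sigma$ is a deterministic sofic approximation satisfying $\overline{\h}_\Sigma = h^\sharp$ and $\underline{\h}_\Sigma = h^\flat$ on all of $\Prob^\Gamma(\A^\Gamma)$ \emph{simultaneously}. If $\mu_1, \mu_2 \in \Prob^\Gamma(\A^\Gamma)$ are isomorphic as measure-preserving $\Gamma$-systems, then classical isomorphism invariance of deterministic sofic entropy yields $\overline{\h}_\Sigma(\mu_1) = \overline{\h}_\Sigma(\mu_2)$ and $\underline{\h}_\Sigma(\mu_1) = \underline{\h}_\Sigma(\mu_2)$, which immediately forces equality of $h^\sharp$ and $h^\flat$ at $\mu_1$ and $\mu_2$.

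To cover isomorphisms that cross alphabets, say between $\mu_i \in \Prob^\Gamma(\A_i^\Gamma)$, apply Theorem~\ref{thm:main} twice (once for $\A_1$ and once for $\A_2$) to produce $\{S_n^{(1)}\}$ and $\{S_n^{(2)}\}$, each subsets of the same alphabet-independent space $\Hom(\Gamma, \Sym(n))$. Since $\PP(S_n^{(1)} \cap S_n^{(2)}) \to 1$, any selection $\sigma_n \in S_n^{(1)} \cap S_n^{(2)}$ gives a single $\Sigma$ that computes both typical entropies at once, and the same comparison closes the argument. I do not anticipate a real obstacle here: the entire proof is a quantifier manipulation enabled by the uniformity in $\mu$ and the alphabet-independence of the sets $S_n$ in Theorem~\ref{thm:main}, layered on top of known isomorphism invariance of the deterministic sofic entropy.
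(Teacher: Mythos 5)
Your proposal is correct and follows essentially the same route as the paper: pick a single $\Sigma$ with $\sigma_n\in S_n$, observe $\overline{\h}_\Sigma=h^\sharp$ and $\underline{\h}_\Sigma=h^\flat$ simultaneously for all $\mu$, and invoke isomorphism-invariance of deterministic sofic entropy. The paper's own proof is just a one-line citation of these two facts; your expansion, including the cross-alphabet remark via intersecting the two typical sequences, is a sound elaboration of the same idea.
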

\begin{proof}
	This follows from Theorem~\ref{thm:main} and isomorphism-invariance of $\overline{\h}_\Sigma,\underline{\h}_\Sigma$ for every sofic approximation $\Sigma$.
\end{proof}

\subsection{Typical local limits}
For precise definitions of terminology used here, see Section~\ref{sec:locallimits}.

We use Theorem~\ref{thm:main} to give a method for establishing typical local limits of nearest-neighbor interactions. Given such an interaction and a sofic approximation $\Sigma = \{\sigma_n\}_{n=1}^\infty$, we want to understand the local statistics of the Gibbs states $\xi_n$ on the finite graphs $\sigma_n$. If $n$ is large, then the graph of $\sigma_n$ is locally tree-like at most vertices, so the marginal of the $\xi_n$ on the radius-$r$ neighborhood of most vertices can be compared to some fixed measure $\mu \in \Prob(\A^\Gamma)$. 

If, for a typical sofic approximation, the average of these marginals converges to $\mu$ in the weak topology as $n \to \infty$, we say that $\mu$ is the typical local-on-average limit.

If all but a vanishing fraction of the marginals are close to $\mu$ as $n \to \infty$, we say that $\mu$ is the typical local limit. We will establish this stronger mode of convergence.

The idea is the following: Theorem \ref{thm:main} implies that if a measure $\mu$ is $\f$-equilibrium and satisfies the second-moment criterion, then it's also equilibrium over typical $\Sigma$. This is because if $\nu\in \Prob^\Gamma(\A^\Gamma)$ is any other state then
	\[ \press_\Sigma(\nu) = \overline\h_\Sigma(\nu) - u(\nu) = h^\sharp(\nu) - u(\nu) \leq \f(\nu) - u(\nu) = \press_{\f}(\nu) \leq \press_{\f}(\mu) = \cdots = \press_\Sigma(\mu) . \]
	If $\mu$ is the unique $\f$-equilibrium state then the same argument shows that $\mu$ is uniquely $\Sigma$-equilibrium over a typical $\Sigma$. By \cite[Theorem~6.5]{shriver2023a}, this means it must be the local-on-average limit of Gibbs states over a typical sofic approximation.

In Section~\ref{sec:locallimits} we work out a few typical local limits over the uniform permutation model $\PP^\unif$ using a refined version of this method. After proving a general result (Proposition~\ref{prop:locallimit}), as a first example (Proposition~\ref{prop:isinglimit}) we show how this makes the ferromagnetic Ising case (previously established in \cite{montanari2012}) easy. The antiferromagnetic case, which does not seem to be in the literature, is also easily dealt with (Proposition~\ref{prop:afisinglimit}).

Then we use calculations from \cite{galanis2016a} to work out the $\PP^\unif$-typical local limit of the Potts model at all temperatures but one -- at the critical temperature where the ordered and disordered states have the same pressure, we are unable to say exactly what the limit is (Proposition~\ref{prop:pottslimit}).

Note that this method relies on there being only a few equilibrium states among the uncountably many Gibbs states, which is a special property of nonamenable systems: for nice enough interactions over amenable groups, all Gibbs states are equilibrium \cite{tempelman1984}.


\subsection{Kesten--Stigum bound for typicality}
\label{sec:KSintro}
In this section we consider only the uniform permutation model.

As remarked above, if $\mu$ satisfies the second-moment criterion for the uniform permutation model then $h^\flat(\mu) = h^\sharp(\mu) = \f(\mu)$ (Cor.~\ref{cor:2ndmomentf}). If $\mu$ is a Markov chain then this gives a simple formula for the typical entropy (see Section~\ref{sec:finvariant}). To complement this, we give a criterion which implies the typical entropy of a $\Gamma$-indexed Markov chain is $-\infty$.

We call a measure $\mu$ \emph{weakly typical} if $h^\sharp(\mu) > -\infty$. This is analogous to the notion of typicality introduced in \cite{backhausz2018}; see also \cite{backhausz2022}.

Given a Markov chain $\mu$ indexed by $\Gamma=\FF_r$, let $\theta$ denote the second-largest (in absolute value) eigenvalue of its transition matrix. The Kesten--Stigum bound for reconstruction states that if $\abs{\theta}^2 (2r-1) > 1$ then reconstruction is possible. (In fact, this is the exact threshold for ``census'' reconstruction \cite{mossel2003}.) 
We prove the following stronger version of this statement:

\begin{theorem}
\label{thm:KSbound}
	Suppose $\mu$ is a Markov chain indexed by $\Gamma$ and $\theta$ is the second-largest eigenvalue of its transition matrix. If $\abs{\theta}^2(2r-1) > 1$ then $\mu$ is not weakly typical.
\end{theorem}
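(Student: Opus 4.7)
The plan is a Markov-chain adaptation of Robinson-Wormald's small-subgraph conditioning method (SSC). For fixed $\epsilon > 0$, let $Z_n(\sigma_n)$ count labelings $x \in \A^{[n]}$ whose empirical edge statistics lie within $\epsilon$ of the edge marginal of $\mu$. By the definition of the $\f$-invariant, the annealed expectation is $\EE^\unif Z_n = \exp(n\f(\mu) + o(n))$. To conclude $h^\sharp(\mu) = -\infty$, it is enough to find a sequence $\{S_n\}$ of sets with $\PP^\unif(S_n) \to 1$ on which $Z_n(\sigma_n) = 0$ for large $n$; then by Theorem~\ref{thm:main}, the typical sofic approximations admit no good models.

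The heart of the argument is a cycle-correction analysis. For $\sigma_n \sim \PP^\unif$, the cycle counts $C_\ell(\sigma_n)$ converge jointly in distribution to independent Poissons $Y_\ell$ with means $\lambda_\ell \sim (2r-1)^\ell/(2\ell)$. Conditioning on a length-$\ell$ cycle multiplies $\EE Z_n$ by a factor $\delta_\ell$ determined from the spectral decomposition of the Markov chain's transition matrix $P$. Specifically, summing the Markov weights over label sequences consistent with closure around a length-$\ell$ cycle gives $\delta_\ell = \mathrm{tr}(P^\ell) = 1 + \sum_{i \geq 2} \theta_i^\ell$ where $\theta_i$ are the non-Perron eigenvalues, so $\delta_\ell - 1 \sim \theta^\ell$ as $\ell \to \infty$. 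The standard SSC theorem says: $Z_n/\EE Z_n$ converges to a positive limit iff $\sum_\ell \lambda_\ell (\delta_\ell - 1)^2 < \infty$, a series of order $\sum_\ell ((2r-1)|\theta|^2)^\ell/\ell$. Under the hypothesis $|\theta|^2(2r-1) > 1$ this series diverges, forcing $Z_n/\EE Z_n \to 0$ in probability.

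The hard part is promoting $Z_n/\EE Z_n \to 0$ in probability to the outright vanishing $\PP^\unif(Z_n = 0) \to 1$ required for $h^\sharp(\mu) = -\infty$ (merely subexponential $Z_n$ only gives $h^\sharp \leq 0$). Two possible routes: (a) refine the good-model definition to enforce closeness of empirical statistics on radius-$R$ balls with $R$ slowly growing, so that Kesten-Stigum reconstruction of $\mu$ imposes long-range correlations that a typical $\sigma_n$ cannot globally support via its mixing properties; (b) extend the SSC analysis so that the limiting correction $W = \prod_\ell \delta_\ell^{Y_\ell} e^{-\lambda_\ell(\delta_\ell - 1)}$ not only degenerates in distribution but forces the finite-$n$ quantities $Z_n$ to be identically zero on a set of $\sigma_n$ with probability tending to one (through truncation at large $\ell$ together with a union bound on the conditional second moment). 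The main obstacle is thus bridging the gap between the standard $L^2$ SSC conclusion and the outright degeneracy required; the computation of $\delta_\ell$ and the divergence of the criterion series constitute the easy, structural part of the proof, while the upgrade to $\PP^\unif(Z_n = 0) \to 1$ is where substantive work is needed.
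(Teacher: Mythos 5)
Your proposal is a genuinely different route, and the gap you flag at the end is real — but it is in fact fatal to the SSC approach as stated, not merely ``substantive work.'' The paper avoids it entirely by not using small subgraph conditioning at all.

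Two concrete problems with the SSC route. First, your $Z_n$ counts labelings whose empirical \emph{edge} statistics are within $\epsilon$ of $\mu$'s edge marginal. For any reasonable edge marginal, $Z_n > 0$ deterministically once $n$ is large (one can always approximately realize prescribed single-site and pair frequencies on a $2r$-regular graph). So it is not possible to show $\PP^\unif(Z_n = 0) \to 1$: the event never occurs. You would need to instead work with $\abs{\Omega(\sigma_n, \mu, \varepsilon)}$, the count of labelings whose full empirical distribution on $\A^\Gamma$ is $\varepsilon$-close to $\mu$, which imposes constraints on all depth-$R$ ball statistics, not just edges. But for that object the cycle-correction factor $\delta_\ell$ is no longer a clean trace $\mathrm{tr}(P^\ell)$, and the SSC machinery does not apply off the shelf. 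Second, even with the correct counting variable, divergence of $\sum_\ell \lambda_\ell(\delta_\ell - 1)^2$ yields only $Z_n / \EE Z_n \to 0$ in probability; this is consistent with $Z_n$ growing exponentially, just at a slower rate than $\EE Z_n$. To conclude $h^\sharp(\mu) = -\infty$ you need, for some fixed $\varepsilon_0 > 0$, that $\Omega(\sigma_n, \mu, \varepsilon_0) = \varnothing$ with probability bounded away from zero (so that $\underline\rho_\mu(h) > 0$ for all $h$). Neither route (a) nor (b) in your sketch closes this gap.

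The paper's argument is spectral rather than combinatorial, and sidesteps the second-moment analysis entirely. It first proves (Theorem~\ref{thm:repopbound}) that if $\mu$ is weakly typical then the Koopman representation on $L^2_0(\mu)$ is weakly contained in the left-regular representation of $\Gamma$: concretely, $\norm{\pi(A)}_{L^2_0(\mu)} \leq \norm{\lambda(A)}_{\ell^2(\FF_r)}$ for every $A \in \CC\FF_r$. The key input here is the Bordenave--Collins theorem that, for a uniformly random permutation action, the operator norm of $\lambda_n(A)$ restricted to $\mb{1}^\perp$ converges in probability to $\norm{\lambda(A)}$; if good models existed with positive probability, one could build near-eigenvectors witnessing a larger operator norm, a contradiction. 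One then takes $A = A_m$, the averaging operator over the sphere $S_m$, and computes $\norm{\lambda(A_m)} = (m + 1 - m/r)(2r-1)^{-m/2}$ (Proposition~\ref{prop:treeavgbound}) while a direct Markov-chain correlation computation gives $\norm{\pi(A_m)}_{L^2_0(\mu)} \geq \abs{\theta}^m$ (Proposition~\ref{prop:repopbound}). Taking $m$th roots and $m \to \infty$ gives $\abs{\theta} \leq 1/\sqrt{2r-1}$. Your computation that $\delta_\ell - 1 \sim \theta^\ell$ and the resulting divergence at $\abs{\theta}^2(2r-1) > 1$ is good structural intuition for \emph{why} the threshold is where it is, and is genuinely parallel to the spectral picture; but as a proof strategy it stalls at exactly the upgrade you identify, whereas the operator-norm argument delivers the nonexistence of good models directly.
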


We describe this as stronger because, for nearest-neighbor interactions with no hard constraints (which in this case means the Markov transition matrix has all positive entries), nonreconstruction implies the second moment criterion \cite[Corollary 16]{shriver2021} which implies typicality (by Theorem~\ref{thm:main} or \cite[Proposition~4.1]{shriver2023a}).

We actually show something which may be even stronger: that if $\abs{\theta}^2(2r-1) > 1$ then the Koopman representation on $L_0^2(\mu)$ is not weakly contained in the left-regular representation of $\Gamma$. Here $L_0^2(\mu)$ is the Hilbert space of square-integrable mean-zero functions on $(\A^\Gamma, \mu)$. This weak containment is implied by typicality (Thm.~\ref{thm:repopbound}) but we do not know if the converse is true.

The proof is related to proofs of some correlation bounds for factors of iid processes \cite{backhausz2015,lyons2017}. \\

As an example, we apply this to the Ising model: let $\A = \{-1,+1\}$, and for each $\theta \in (-1/2, 1/2)$ let $\mu_\theta \in \Prob^\Gamma(\A^\Gamma)$ denote the stationary Markov chain with uniform single-site marginal and transition matrix
	\[ \begin{pmatrix}
			\frac{1+\theta}{2} & \frac{1-\theta}{2} \\
			\frac{1-\theta}{2} & \frac{1+\theta}{2} 
		\end{pmatrix} . \]
This is a free boundary state with inverse temperature determined by $\theta$. If $\theta<0$ then the model is considered antiferromagnetic.

This parametrization is convenient because $\theta$ is the second-largest eigenvalue of the transition matrix.

\begin{cor}
\label{cor:isingtypical}
	If $\abs{\theta} > \frac{1}{\sqrt{2r-1}}$ then $\mu_\theta$ is not weakly typical.
\end{cor}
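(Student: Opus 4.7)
The plan is to obtain this as an immediate consequence of Theorem~\ref{thm:KSbound}, so the work reduces to verifying the hypotheses of that theorem for the family $\mu_\theta$.

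First I would confirm that the parameter $\theta$ appearing in the definition of $\mu_\theta$ is indeed the second-largest eigenvalue (in absolute value) of the transition matrix
\[ P_\theta = \begin{pmatrix}
        \frac{1+\theta}{2} & \frac{1-\theta}{2} \\
        \frac{1-\theta}{2} & \frac{1+\theta}{2}
    \end{pmatrix}. \]
Since $P_\theta$ has trace $1+\theta$ and determinant $\tfrac{(1+\theta)^2 - (1-\theta)^2}{4} = \theta$, its eigenvalues are $1$ and $\theta$, with eigenvectors $(1,1)^T$ and $(1,-1)^T$ respectively. Because $\theta \in (-1/2, 1/2)$, we have $|\theta| < 1$, so $\theta$ is the second-largest eigenvalue in absolute value, matching the notation of Theorem~\ref{thm:KSbound}.

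Next I would translate the condition $|\theta| > 1/\sqrt{2r-1}$ into the Kesten--Stigum form: squaring both sides and multiplying by $2r-1$ gives $|\theta|^2(2r-1) > 1$, which is exactly the hypothesis of Theorem~\ref{thm:KSbound}. Applying that theorem then yields that $\mu_\theta$ is not weakly typical, as desired.

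There is essentially no obstacle to this argument: all the substantive work is contained in Theorem~\ref{thm:KSbound}, and the corollary is just the observation that the chosen parametrization of the Ising free-boundary state makes $\theta$ literally equal to the relevant eigenvalue, so the Kesten--Stigum threshold translates directly into the stated inequality on $|\theta|$.
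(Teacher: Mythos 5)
Your proposal is correct and follows exactly the paper's reasoning: the paper notes, immediately before stating the corollary, that the parametrization is chosen precisely so that $\theta$ equals the second eigenvalue, and then the corollary is a direct application of Theorem~\ref{thm:KSbound}. Your explicit trace/determinant computation of the eigenvalues just makes that observation rigorous; there is no gap.
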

	Conversely, if $\abs{\theta} \leq \frac{1}{\sqrt{2r-1}}$ then $\mu_\theta$ is tail-trivial \cite{bleher1995} so satisfies the second-moment criterion \cite[Corollary 16]{shriver2021}. Hence the Kesten--Stigum criterion gives an exact threshold for typicality of the free-boundary Ising state. Combining the previous corollary with Corollary~\ref{cor:2ndmomentf} gives
		\[ h^\flat(\mu_\theta) = h^\sharp(\mu_\theta)
			= \left\{ \begin{array}{ll}
				\log 2 + r (\shent(\tfrac{1-\theta}{2}) - \log 2), & \theta^2(2r-1) \leq 1 \\
				-\infty, & \text{else}
				\end{array} \right. \]
	where $\shent(t) = -t \log t - (1-t)\log (1-t)$. See Figure~\ref{fig:isingentropy} for a plot.
	
\begin{figure}
\begin{center}
	\includegraphics[width=0.7\textwidth]{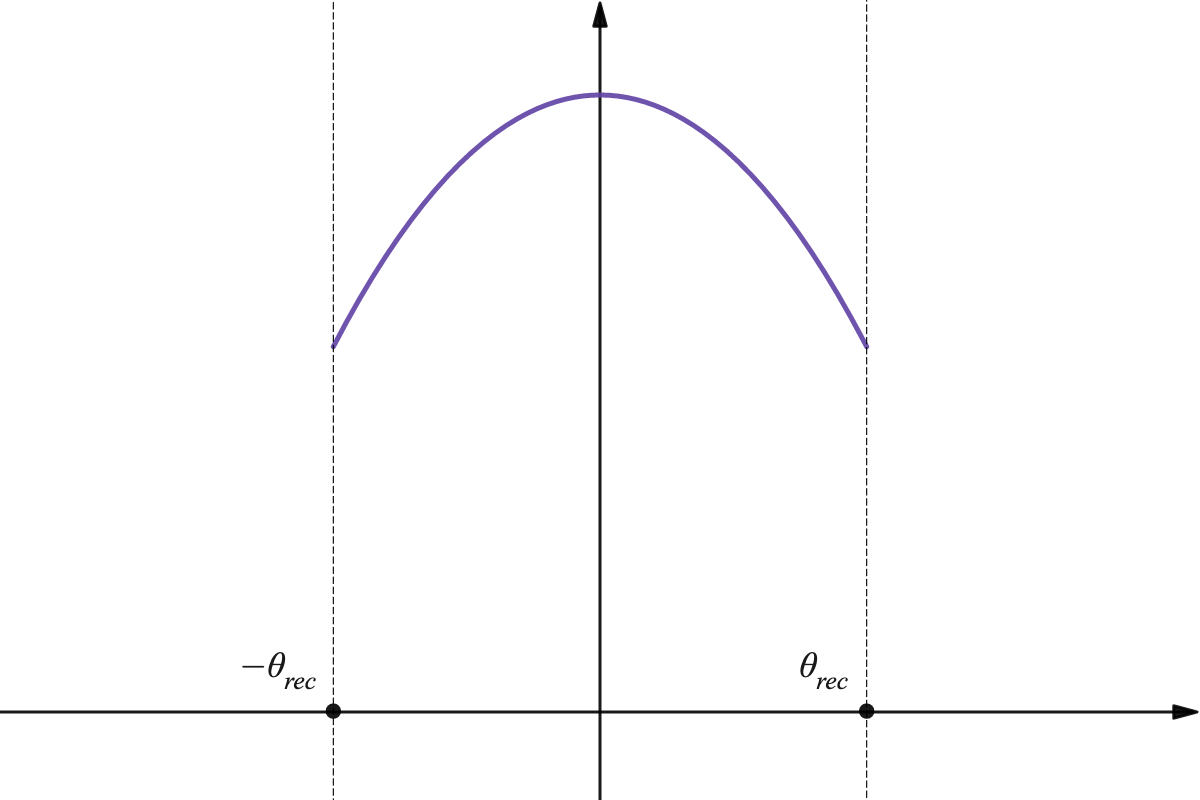}
	\caption{Typical entropy of free boundary Ising state with parameter $\theta \in (-1/2,\, 1/2)$. Outside the $\theta$ values marked with dashed lines, the value drops to $-\infty$.}
	\label{fig:isingentropy}
\end{center}
\end{figure}

	Any good model for $\mu_\theta$ on a finite graph $(V,E)$ determines a bisection of the vertex set into two approximately equal-sized pieces with about $\frac{1-\theta}{2}$ fraction of edges going from one piece to the other. If $\theta$ is close to 1, then, it is intuitively clear that there should typically be no good models. On the other hand, every such bisection has approximately the edge statistics of $\mu_\theta$, but may have different statistics over general neighborhoods, so is a good model for some measure with Markov approximation $\mu_\theta$ but not necessarily $\mu_\theta$ itself.

	One of the main results of \cite{dembo2017} determines the asymptotic minimal cut density of a random regular graph. It follows from this and Corollary~\ref{cor:isingtypical} that there is some range of $\theta$ for which there are typically good models for \emph{some} measure whose Markov approximation is $\mu_\theta$, but no good models for $\mu_\theta$ itself (at least for large $r$). Hence, given an edge marginal, the Markov chain is not necessarily the ``most typical'' measure with that edge marginal. \\

However, as is the case with reconstruction, the resulting bound for typicality of the hardcore model is \emph{not} sharp: Let $\A = \{1,0\}$. Given $\alpha \in [0,1/2]$, let $\mu_\alpha \in \Prob^\Gamma(\A^\Gamma)$ be the Markov chain with single-site marginal $(\alpha,\ 1-\alpha)$ and transition matrix
	\[ \begin{pmatrix}
			0 & \frac{\alpha}{1-\alpha} \\
			1 & \frac{1-2\alpha}{1-\alpha} 
		\end{pmatrix} . \]
This has second-largest eigenvalue $-\frac{\alpha}{1-\alpha}$. If we identify elements of $\{1,0\}^\Gamma$ with subsets of $\Gamma$, then $\mu_\alpha$ is supported on independent sets (subsets containing no two adjacent vertices). Any good model for $\mu_\alpha$ is a set of vertices of a finite graph with density $\alpha$ and a vanishing fraction of adjacent vertices.
\begin{cor}
	If $\frac{\alpha}{1-\alpha} > \frac{1}{\sqrt{2r-1}}$ (equivalently, if $\alpha > \frac{1}{1+\sqrt{2r-1}}$) then the hardcore free-boundary state $\mu_\alpha$ is not weakly typical.
\end{cor}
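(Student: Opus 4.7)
The plan is to apply Theorem~\ref{thm:KSbound} directly; this reduces the claim to two elementary verifications, namely computing the non-Perron eigenvalue of the $2\times 2$ transition matrix and algebraically rewriting the Kesten--Stigum condition.

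First I would confirm that the eigenvalues of the transition matrix are $1$ and $-\alpha/(1-\alpha)$. The quickest route is to expand the characteristic polynomial and clear the factor $1-\alpha$, giving $(1-\alpha)\lambda^2 - (1-2\alpha)\lambda - \alpha = 0$; the discriminant collapses as $(1-2\alpha)^2 + 4\alpha(1-\alpha) = 1$, so the roots are $\{1,\ -\alpha/(1-\alpha)\}$. (Alternatively, the matrix is column-stochastic, so $1$ is automatically an eigenvalue, and the other eigenvalue equals $\mathrm{tr} - 1 = \tfrac{1-2\alpha}{1-\alpha} - 1 = -\tfrac{\alpha}{1-\alpha}$.) For $\alpha \in [0, 1/2]$ we have $\alpha/(1-\alpha) \leq 1$, so the second-largest eigenvalue in absolute value is exactly $\theta = -\alpha/(1-\alpha)$, as asserted in the paragraph preceding the corollary.

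Next, Theorem~\ref{thm:KSbound} yields that $\mu_\alpha$ is not weakly typical whenever $|\theta|^2(2r-1) > 1$. Substituting gives $\bigl(\tfrac{\alpha}{1-\alpha}\bigr)^2(2r-1) > 1$; taking square roots (all quantities are nonnegative and $1-\alpha > 0$) produces the first form of the hypothesis, $\tfrac{\alpha}{1-\alpha} > \tfrac{1}{\sqrt{2r-1}}$. Clearing the denominator gives $\alpha\sqrt{2r-1} > 1-\alpha$, i.e.\ $\alpha(1+\sqrt{2r-1}) > 1$, which is the equivalent second form $\alpha > \frac{1}{1+\sqrt{2r-1}}$.

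There is no serious obstacle here: the corollary is a mechanical specialization of Theorem~\ref{thm:KSbound} to a particular $2\times 2$ matrix, and nothing about the hardcore structure (e.g. the hard constraint enforced by the zero entry) is used beyond the value of $\theta$. All of the substantive content lives in Theorem~\ref{thm:KSbound}; the only point worth flagging is that the matrix here is column-stochastic rather than row-stochastic, but this affects only the bookkeeping for the eigenvalue computation and not the application of the theorem.
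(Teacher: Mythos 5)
Your proposal is correct and matches the paper's approach exactly: the paper states the second-largest eigenvalue is $-\tfrac{\alpha}{1-\alpha}$ in the text just above the corollary and leaves the application of Theorem~\ref{thm:KSbound} implicit, which is precisely the calculation you spell out. The eigenvalue verification (trace minus Perron eigenvalue of the column-stochastic matrix) and the algebraic rearrangement of $\abs{\theta}^2(2r-1) > 1$ are both sound.
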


This bound on the density $\alpha$ is not optimal, at least for large $r$, since it's larger than the maximal density of an independent set which is $2\frac{\log r}{r}+o_{r \to \infty}(1)$ \cite{frieze1992}. 


\subsection{Acknowledgements}
This material is based on work completed at the University of Texas at Austin while supported by NSF Grant DMS 1937215.

Thanks to Tim Austin and Lewis Bowen for helpful conversations and for comments on earlier drafts.

\section{Definitions and basic lemmas}
\label{sec:definitions}

Throughout, let $\A$ be a finite alphabet, $\Gamma$ be the free group of rank $r$ with generating set $\{s_1, \ldots, s_r\}$ and identity $e$.

The shift action of $\Gamma$ is defined as follows: if $\mb{x} \in \A^\Gamma$ and $g,h \in \Gamma$ then the label of $g \cdot \mb{x}$ at $h$ is given by
	\[ (g \cdot \mb{x})(h) = \mb{x}(hg) . \]
This is a left action. The action of $g \in \Gamma$ on $\Prob^\Gamma(\A^\Gamma)$ is then given by pushforwards:
	\[ g \cdot \mu = (\mb{x} \mapsto g\cdot \mb{x})_* \mu . \]

We will identify $\Gamma$ with its left Cayley graph, which is a $2r$-regular tree with edges of the form $(g,s_i g)$ for $g \in \Gamma$ and $i \in [r] = \{1, 2, \ldots, r\}$. 

\subsection{Sofic entropy}
We follow the notation conventions of \cite{austin2016}, but for simplicity assume the sofic approximation maps are homomorphisms.

Given finite set $V$ and a homomorphism $\sigma \in \Hom(\Gamma, \Sym(V))$ and a ``microstate'' $\mb{x} \in \A^V$, we define the pullback labeling of $\mb{x}$ at $v \in V$ as the labeling $\Pi_v^\sigma \mb{x} \in \A^\Gamma$ defined by
	\[ \left( \Pi_v^\sigma \mb{x} \right)(h) = \mb{x}(\sigma^h v). \]
Note that with this definition we have $g \cdot (\Pi_v^\sigma \mb{x}) = \Pi_{\sigma^g v}^\sigma \mb{x}$, since $\sigma$ is a homomorphism.

The empirical distribution of $\mb{x}$ over $\sigma$ is then defined by
	\[ P_{\mb{x}}^\sigma = \frac{1}{\abs{V}} \sum_{v \in V} \delta_{\Pi_v^\sigma \mb{x}} \in \Prob^\Gamma(\A^\Gamma) . \]
Since $\sigma$ is a homomorphism, the empirical distribution is always shift-invariant.
	
Briefly, the sofic entropy of $\mu \in \Prob^\Gamma(\A^\Gamma)$ over a sequence of homomorphisms is the exponential growth rate of the number of microstates whose empirical distribution is close to $\mu$ in the weak topology. To make this precise, endow $\Prob^\Gamma(\A^\Gamma)$ with the transportation metric 
	\[ \overline{d}(\mu, \nu) = \sup \left\{ \abs*{\int f\, d\mu - \int f\, d\nu} \st f \in \Lip_1(\A^\Gamma) \right\} \]
where $\Lip_1(\A^\Gamma)$ is the set of real-valued, 1-Lipschitz functions on $\A^\Gamma$, which we give the metric
	\[ d(\mb{x}, \mb{y}) = \sum_{\gamma \in \Gamma} \1_{\{\mb{x}(\gamma) \ne \mb{y}(\gamma)\}} (5r^2)^{-d(\gamma, e)}. \]
For $\sigma \in \Hom(\Gamma, \Sym(V))$ we then define the set of $\varepsilon$-good microstates for $\mu$ by
	\[ \Omega(\sigma, \mu, \varepsilon) = \{ \mb{x} \in \A^V \st \overline{d}( P_{\mb{x}}^\sigma,\ \mu) < \varepsilon \}. \]
Here we could alternatively use open neighborhoods of $\mu$ instead of fixing a metric, but using a metric will be convenient for defining $h^\flat,h^\sharp$. The choice of $5r^2$ in particular will be convenient in the proof of Lemma~\ref{lem:rhoconcentration}.
	
We will consider both upper and lower exponential growth rates: if $\Sigma = \{ \sigma_n \in \Hom(\Gamma, \Sym(V_n))\}_{n=1}^\infty$ the upper sofic entropy over $\Sigma$ is defined by
	\[ \overline{\h}_\Sigma(\mu) = \inf_{\varepsilon>0} \limsup_{n \to \infty} \frac{1}{\abs{V_n}} \log \abs{\Omega(\sigma_n, \mu, \varepsilon)} \]
and the lower sofic entropy by
	\[ \underline{\h}_\Sigma(\mu) = \inf_{\varepsilon>0} \liminf_{n \to \infty} \frac{1}{\abs{V_n}} \log \abs{\Omega(\sigma_n, \mu, \varepsilon)} . \]

For a finite set $F \subset \Gamma$ and $\delta>0$, a homomorphism $\sigma$ is called $(F,\delta)$-sofic if 
	\[ \frac{1}{\abs{V}} \abs*{\{v \in V \st \sigma(\gamma_1)\, v \ne \sigma(\gamma_1)\,v \ \text{for all distinct } \gamma_1,\gamma_2 \in F \}} > 1- \delta . \]
For example, if $F$ is the radius-$R$ ball centered at the identity, then this requires that for all but a fraction $\delta$ of the vertices $v$, the subgraph of $\sigma$ induced by the vertices within distance $R$ of $v$ is a tree. In other words, the graph of $\sigma$ locally looks like the Cayley graph of $\Gamma$, with quality parameters $(F,\delta)$.

The sequence $\Sigma$ is called a sofic approximation to $\Gamma$ if for any $(F, \delta)$, for all large enough $n$ $\sigma_n$ is $(F,\delta)$-sofic. If $\Sigma$ is a sofic approximation then these quantities are measure-conjugacy invariants \cite{bowen2010a}.

The quantities may depend on the choice of sofic approximation $\Sigma$, but it is not fully understood how. It is known that for some measures, the value can be $-\infty$ for some choices of $\Sigma$ and positive for others. But it is not known whether two choices of $\Sigma$ can achieve distinct positive values for the same measure.

\subsection{$\f$ invariant}
\label{sec:finvariant}
The $\f$ invariant is the exponential growth rate of the expected number of good microstates over a uniformly random homomorphism: if $\sigma_n$ is uniformly distributed over $\Hom(\Gamma, \Sym(n))$ then
	\[ \f(\mu) \coloneqq \inf_{\varepsilon > 0} \limsup_{n \to \infty} \frac{1}{n} \log \EE \abs{\Omega(\sigma_n, \mu, \varepsilon)} . \]
The method of \cite{bowen2010c} shows that the limsup can be replaced by a liminf without changing the value: in either case $\f$ is equivalently given by the information-theoretic formula in \cite{bowen2010}.

A result of \cite{bowen2010b} is that there is an especially simple formula for Markov chains: if $p_1 \in \Prob(\A)$ is the single-site marginal of $\mu$ and $p_2 \in \Prob(\A^2)$ is the marginal on any pair of adjacent elements of $\Gamma$, then
	\[ \f(\mu) = (1-2r) \shent(p_1) + r \shent(p_2) \]
where $\shent$ is the Shannon entropy.

The $\f$ invariant is also sometimes called the annealed entropy, but in the present paper we will use that term for sofic entropy over more general random sofic approximations. We use the term $\f$ invariant to make more clear when we are discussing the uniform case specifically.


\subsection{Exponentially concentrated random sofic approximations}
\label{sec:ecrsa}

A \emph{random sofic approximation} (by homomorphisms) is a sequence of random homomorphisms $\{\sigma_n \in \Hom(\Gamma, \Sym(V_n))\}_{n=1}^\infty$ such that for every $F,\delta$, the random homomorphism is $(F,\delta)$-sofic with superexponentially high probability:
	\[ \limsup_{n \to \infty} \frac{1}{\abs{V_n}} \log \PP\{ \sigma_n \text{ is not $(F,\delta)$-sofic}\} = - \infty . \]
This implies the existence of a sequence of sets of homomorphisms $\{ S_n^\sofic \}_{n=1}^\infty$ such that $\PP(S_n^\sofic) \to 1$ and if $\sigma_n \in S_n^\sofic$ for all $n$ then $\{\sigma_n\}$ is a sofic approximation. For the proof of Theorem~\ref{thm:main}, we will only need to use this sequence, not the superexponential decay.

We say that two homomorphisms $\sigma_1, \sigma_2 \in \Hom(\Gamma, \Sym(V))$ differ by a ``switching'' if for some $i,j \in V$ and $k \in [r]$
	\[ \sigma_1^k(i) = \sigma_2^k(j) \quad\text{and}\quad \sigma_1^k(j) = \sigma_2^k(i) , \]
and $\sigma_1, \sigma_2$ are otherwise identical. If $\sigma_2$ can be obtained from $\sigma_1$ using a minimal number $m$ of switchings, then we define $d(\sigma_1, \sigma_2) = m/\abs{V}$.

We call a sequence of random homomorphisms \emph{exponentially concentrated} if there is some constant $C>0$ such that for every $n \in \NN$, 1-Lipschitz $f \colon \Hom(\Gamma, \Sym(V_n)) \to \RR$, and $t > 0$ we have
	\[ \PP\{ \abs{f(\sigma_n) - \EE [f(\sigma_n)]} > t \} \leq 2 e^{-C \abs{V_n} t^2} . \]
	
By \cite[Lemma 22]{shriver2021}, this holds for the uniform permutation model with $C = \frac{1}{2r}$. The proof uses Azuma's inequality, mimicking a similar concentration inequality for the configuration model which also uses the notion of ``switching'' \cite{wormald1999}.

The exact dependence of the upper bound on $t$ is not important, but we do not try to formulate the most general version here.


\subsection{Upper and lower typical entropy functions $h^\sharp$, $h^\flat$}
In this section, $\PP$ and $\EE$ refer to an arbitrary (but fixed throughout) exponentially concentrated random sofic approximation by homomorphisms.

For $h \in [0, \log|\A|]$ and $\sigma \in \Hom(\Gamma, \Sym(n))$, let
	\[ \rho_\mu (\sigma, h) = \inf \left\{ \varepsilon>0 \st \frac{1}{n} \log \abs*{\Omega(\sigma, \mu, \varepsilon)} \geq h \right\} . \]
Note that $\rho_\mu(\sigma, h) \leq \diam(\Prob^\Gamma(\A^\Gamma))$. Given a random sofic approximation we define the limits
	\[ \overline\rho_\mu (h) = \limsup_{n \to \infty} \EE \rho_\mu(\sigma_n, h) \]
and
	\[ \underline\rho_\mu (h) = \liminf_{n \to \infty} \EE \rho_\mu(\sigma_n, h) \]
where the expectations are over the random $\sigma_n \in \Hom(\Gamma, \Sym(V_n))$.

For each $\sigma$ the function $\rho_\mu(\sigma, h)$ is nondecreasing in $h$, so the expectations and limit functions are nondecreasing as well. Define
	\[ h^\flat = \sup \{ h \in \RR \st \overline\rho_\mu(h) = 0 \} \ \leq \ h^\sharp = \sup \{ h \in \RR \st \underline\rho_\mu(h) = 0 \} \ \leq \ \log \abs{\A} , \]
using the convention that the supremum of the empty set is $-\infty$.
	
For the permutation model, it may be that $h^\flat \equiv h^\sharp$, but this seems to be a difficult question; cf.~\cite[Conjecture~1.4]{backhausz2022}.

Note that since $\underline{\h}_\Sigma$ and $\overline{\h}_\Sigma$ are independent of the choice of metric on $\A^\Gamma$, Theorem~\ref{thm:main} implies that $h^\flat,h^\sharp$ are independent of the metric as well.


\subsubsection{Properties of $\rho$}

The crucial property of $\rho_\mu(\cdot, h)$ is that it's exponentially concentrated around its mean. This follows from the exponential concentration property of the random sofic approximation, once we show that it's a Lipschitz function:

\begin{lemma}
\label{lem:rhoconcentration}
For every $\mu \in \Prob^\Gamma(\A^{\Gamma})$ and $h>0$, the function $\sigma \mapsto \rho_\mu(\sigma, h)$ is $2$-Lipschitz.
In particular, over an exponentially concentrated random sofic approximation we have
	\[ \PP \left\{ \abs*{ \rho_\mu (\sigma_n, h) - \EE [\rho_\mu(\sigma_n, h)]} > t \right\} \leq 2 \exp\left( \frac{-C\abs{V_n} t^2}{4} \right) .\]
\end{lemma}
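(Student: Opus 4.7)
The claimed concentration inequality follows mechanically from the Lipschitz statement: apply the exponential concentration hypothesis to the $1$-Lipschitz function $\tfrac12\,\rho_\mu(\cdot, h)$, with threshold $t/2$ in place of $t$. So the substantive task is to prove that $\sigma \mapsto \rho_\mu(\sigma, h)$ is $2$-Lipschitz with respect to the switching metric. By the triangle inequality (after composing single-switching moves along a shortest switching path), it suffices to handle the case when $\sigma_1, \sigma_2$ differ by exactly one switching and show $\abs{\rho_\mu(\sigma_1, h) - \rho_\mu(\sigma_2, h)} \leq 2/\abs{V}$. This in turn reduces, via the triangle inequality in $\overline{d}$ and the definition of $\rho_\mu$ as an infimum, to the empirical-distribution stability estimate
\[ \overline{d}(P_{\mb{x}}^{\sigma_1},\, P_{\mb{x}}^{\sigma_2}) \leq 2/\abs{V} \quad \text{for every } \mb{x} \in \A^V, \]
since this forces $\Omega(\sigma_1, \mu, \varepsilon) \subseteq \Omega(\sigma_2, \mu, \varepsilon + 2/\abs{V})$.

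To establish the stability estimate I use the diagonal coupling $\pi = \frac{1}{\abs{V}}\sum_v \delta_{(\Pi_v^{\sigma_1}\mb{x},\, \Pi_v^{\sigma_2}\mb{x})}$ of $P_{\mb{x}}^{\sigma_1}$ and $P_{\mb{x}}^{\sigma_2}$, expand the metric $d$ on $\A^\Gamma$, and swap the order of summation to obtain
\[ \overline{d}(P_{\mb{x}}^{\sigma_1},\, P_{\mb{x}}^{\sigma_2}) \leq \frac{1}{\abs{V}} \sum_{\gamma \in \Gamma} (5r^2)^{-\abs{\gamma}}\, N_\gamma, \qquad N_\gamma \coloneqq \abs*{\{v \in V \st \sigma_1^\gamma v \neq \sigma_2^\gamma v\}}. \]

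The heart of the argument is the combinatorial bound $N_\gamma \leq 2\abs{\gamma}$. To see this, suppose $\sigma_1, \sigma_2$ disagree only in the action of a single generator $s_k$ at two points $i, j$; then the permutations $\sigma_1^{s_k^{\pm1}}$ and $\sigma_2^{s_k^{\pm1}}$ differ at exactly two points, and $\sigma_1^{s_i^{\pm 1}} = \sigma_2^{s_i^{\pm 1}}$ for $i \neq k$. Write $\gamma$ as a reduced word and track the orbits $v \mapsto \sigma_1^\gamma v$ and $v \mapsto \sigma_2^\gamma v$ step by step. Both orbits agree up to the first step where (a) the generator applied is $s_k^{\pm 1}$ and (b) the common current position lies at one of the two "flipped" points; once they diverge they may or may not reconverge, but in any case the set of $v$ whose orbits have ever diverged grows by at most $2$ at each step that uses $s_k^{\pm 1}$, giving $N_\gamma \leq 2\abs{\gamma}$.

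Given this, it remains to estimate $\sum_\gamma 2\abs{\gamma}(5r^2)^{-\abs{\gamma}}$. Using that the sphere of radius $\ell$ in $\Gamma$ has $2r(2r-1)^{\ell-1}$ elements (for $\ell \geq 1$), this becomes
\[ \frac{4r}{2r-1} \sum_{\ell \geq 1} \ell \left( \frac{2r-1}{5r^2} \right)^\ell = \frac{20 r^3}{(5r^2 - 2r + 1)^2}, \]
which is maximized at $r = 1$ with value $5/4 \leq 2$; this is precisely where the choice of the weight $5r^2$ in the metric on $\A^\Gamma$ is being used. Hence $\sum_v d(\Pi_v^{\sigma_1}\mb{x}, \Pi_v^{\sigma_2}\mb{x}) \leq 2$ and the stability bound follows. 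The main obstacle is the orbit-tracking step yielding $N_\gamma \leq 2\abs{\gamma}$; the rest is bookkeeping and a geometric-series computation.
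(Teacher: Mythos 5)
Your proof is correct and follows the same overall skeleton as the paper's: reduce to a single switching, show that the empirical distributions $P_{\mb{x}}^{\sigma_1}$ and $P_{\mb{x}}^{\sigma_2}$ are within $2/\abs{V}$ in $\overline{d}$ uniformly over $\mb{x}$, and then deduce the Lipschitz bound and the concentration inequality by applying exponential concentration to $\tfrac12\rho_\mu(\cdot,h)$. Where you genuinely diverge from the paper is in the key combinatorial estimate. The paper fixes $v$ and observes that the pullback names $\Pi_v^{\sigma_1}\mb{x},\ \Pi_v^{\sigma_2}\mb{x}$ agree on the ball of radius $R = d_{\sigma_1}(v, B)$ about the identity, where $B$ is the set of (at most four) vertices touched by the switch; it then sums over $v$ stratified by $R$. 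You instead swap the order of summation and bound, for each $\gamma$, the number $N_\gamma = \abs{\{v \in V \st \sigma_1^\gamma v \neq \sigma_2^\gamma v\}}$ by $2\abs{\gamma}$ via orbit tracking. These are dual bookkeepings of the same bad set $\{(v,\gamma) \st \sigma_1^\gamma v \neq \sigma_2^\gamma v\}$; yours trades the paper's geometric ``distance to $B$'' picture for a cleaner linear-in-$\abs{\gamma}$ bound (equivalently, $\sigma_1^\gamma(\sigma_2^\gamma)^{-1}$ is a product of at most $\abs{\gamma}$ transpositions, so its support has size at most $2\abs{\gamma}$), and the closed-form evaluation $\sum_{\gamma \neq e}2\abs{\gamma}(5r^2)^{-\abs{\gamma}} = 20r^3/(5r^2-2r+1)^2 \leq 5/4$ is tidy and checks out. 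One small point: the assertion that ``the set of $v$ whose orbits have ever diverged grows by at most $2$ at each $s_k^{\pm1}$-step'' is correct but deserves a sentence of justification in a full write-up --- it holds because the time-$(m-1)$ orbit map $v \mapsto \sigma_1^{(\text{prefix})}v$ is a bijection of $V$, so exactly two starting points occupy the two flipped positions at that time, and only those not already in the diverged set can be newly added.
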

The constant $C$ here is the same one from the definition of exponential concentration above, and does not depend on $\mu$ or $h$.

\begin{proof}
	We first show that if $\sigma_1, \sigma_2 \in \Hom(\Gamma, \Sym(V))$ differ by a switching then for any microstate $\mb{x} \in \A^V$, its empirical distributions $P_{\mb{x}}^{\sigma_1}, P_{\mb{x}}^{\sigma_2}$ are distance at most $2/\abs{V}$ apart. By definition of the transportation metric,
		\[ \overline{d}(P_{\mb{x}}^{\sigma_1}, P_{\mb{x}}^{\sigma_2}) = 
			\sup\left\{ \abs*{ \int f\, d P_{\mb{x}}^{\sigma_1} - \int f\, d P_{\mb{x}}^{\sigma_2}} \st f \in \Lip_1(\A^\Gamma) \right\} \]
	where $\Lip_1(\A^\Gamma)$ is the set of real-valued, 1-Lipschitz functions on $\A^\Gamma$. For any such function, by definition of the empirical distribution,
		\[ \abs*{ \int f\, d P_{\mb{x}}^{\sigma_1} - \int f\, d P_{\mb{x}}^{\sigma_2}}
    			\leq \frac{1}{\abs{V}} \sum_{v \in V} \abs*{f(\Pi_v^{\sigma_1}\mb{x}) - f(\Pi_v^{\sigma_2}\mb{x})}
			\leq \frac{1}{\abs{V}} \sum_{v \in V} d\left( \Pi_v^{\sigma_1}\mb{x},\ \Pi_v^{\sigma_2}\mb{x}\right) . \]
	With $i,j \in V$ and $k \in [r]$ as in the definition of switching above, let $B = \{i, \sigma_1^k(i), j, \sigma_1^k(j)\}$. If the $\sigma_1$-graph distance between $v$ and $B$ is $R$, then the pullback names $\Pi_v^{\sigma_1}\mb{x},\ \Pi_v^{\sigma_2}\mb{x}$ agree at least on the radius-$R$ ball centered at $e \in \Gamma$. So
		\[ d\left( \Pi_v^{\sigma_1}\mb{x},\ \Pi_v^{\sigma_2}\mb{x}\right)
			\leq \sum_{l=R+1}^\infty (5r^2)^{-l} 2r(2r-1)^{l-1} 
			\leq \frac{(2/5r)^{R+1}}{1-2/5r} \leq 2 \left( \frac{2}{5r} \right)^R , \]
	and
	\begin{align*}
		\overline{d}(P_{\mb{x}}^{\sigma_1}, P_{\mb{x}}^{\sigma_2})
			&\leq \frac{1}{\abs{V}} \sum_{R=0}^\infty 2 \left( \frac{2}{5r} \right)^R \abs{\{v \in V \st d(v,B) = R\}} \\
			&\leq \frac{1}{\abs{V}} \sum_{R=0}^\infty 2 \left( \frac{2}{5r} \right)^R 4 (2r-1)^{R-1} \\
			&\leq \frac{8}{\abs{V}} \sum_{R=0}^\infty \left( \frac{4}{5} \right)^R \\
			&\leq \frac{2}{\abs{V}}.
	\end{align*}
				
	This implies that if $\sigma_1, \sigma_2$ differ by a switching, the minimum distance (in the transportation metric) from $\mu$ to get any specified number of microstates differs by at most $2/\abs{V}$. Therefore $\rho_\mu(\sigma, h)$ is a Lipschitz function of the permutation $\sigma$.
\end{proof}

We will also need continuity as a function of the measure $\mu$. In fact, $\rho$ is Lipschitz in that sense as well:

\begin{lemma}
\label{lem:rholipschitz}
	For each $h$, $\overline\rho_\mu(h)$ and $\underline\rho_\mu(h)$ are 1-Lipschitz functions of $\mu$.
\end{lemma}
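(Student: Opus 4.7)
The plan is to show the stronger pointwise statement that for every fixed $\sigma \in \Hom(\Gamma, \Sym(V))$ and every $h$, the map $\mu \mapsto \rho_\mu(\sigma, h)$ is 1-Lipschitz with respect to $\overline{d}$. The conclusion for $\overline\rho_\mu(h)$ and $\underline\rho_\mu(h)$ will then follow immediately by taking expectations (which preserves the inequality) and then $\limsup$/$\liminf$ (which preserves the 1-Lipschitz property, since if $a_n(\mu) \le b_n(\mu) + c$ for all $n$, then $\limsup a_n(\mu) \le \limsup b_n(\mu) + c$).

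For the pointwise step, fix $\sigma$, $h$, and $\mu, \nu \in \Prob^\Gamma(\A^\Gamma)$, and set $\delta = \overline{d}(\mu,\nu)$. The key observation is the triangle inequality for the transportation metric: for any microstate $\mb{x} \in \A^V$,
    \[ \overline{d}(P_{\mb{x}}^\sigma,\ \nu) \leq \overline{d}(P_{\mb{x}}^\sigma,\ \mu) + \delta. \]
Consequently $\Omega(\sigma, \mu, \varepsilon) \subseteq \Omega(\sigma, \nu, \varepsilon + \delta)$ for every $\varepsilon > 0$, so the number of good microstates at radius $\varepsilon + \delta$ around $\nu$ is at least as large as the number at radius $\varepsilon$ around $\mu$. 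By the definition of $\rho$ as the infimal radius that yields at least $e^{nh}$ microstates, this yields $\rho_\nu(\sigma, h) \leq \rho_\mu(\sigma, h) + \delta$. Swapping the roles of $\mu$ and $\nu$ gives $|\rho_\mu(\sigma,h) - \rho_\nu(\sigma,h)| \leq \overline{d}(\mu,\nu)$.

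Taking expectations over the random $\sigma_n$ preserves this bound, and then applying $\limsup_{n \to \infty}$ or $\liminf_{n \to \infty}$ yields
    \[ \bigl| \overline\rho_\mu(h) - \overline\rho_\nu(h) \bigr| \leq \overline{d}(\mu, \nu), \qquad \bigl|\underline\rho_\mu(h) - \underline\rho_\nu(h)\bigr| \leq \overline{d}(\mu,\nu), \]
which is the claim. There is no serious obstacle here — the argument is a direct triangle-inequality manipulation — the only thing to verify carefully is that the monotonicity of $|\Omega(\sigma,\mu,\cdot)|$ in its third argument interacts correctly with the infimum in the definition of $\rho$, which it does.
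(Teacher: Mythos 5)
Your proposal is correct and follows essentially the same route as the paper: establish the pointwise Lipschitz bound $\abs{\rho_\mu(\sigma,h) - \rho_\nu(\sigma,h)} \leq \overline{d}(\mu,\nu)$ for each fixed $\sigma$, then observe that expectation and $\limsup$/$\liminf$ preserve it. The paper asserts the pointwise inequality as "clear," while you spell out the intermediate step $\Omega(\sigma,\mu,\varepsilon) \subseteq \Omega(\sigma,\nu,\varepsilon+\delta)$ via the triangle inequality, which is exactly the justification the paper leaves implicit.
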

\begin{proof}
	For any $\sigma,h,$ and $\nu$, clearly
		\[ \rho_\mu(\sigma, h) \leq \rho_\nu (\sigma, h) + d(\mu, \nu) , \]
	and by reversing the roles of $\mu,\nu$ we get
		\[ \abs*{ \rho_\mu(\sigma, h) - \rho_\nu (\sigma, h)} \leq d(\mu, \nu) . \]
	The inequality is preserved after taking expectations and a limsup or liminf.
\end{proof}

\subsubsection{Properties of $h^\sharp$, $h^\flat$}

\begin{lemma}
	$h^\sharp(\mu)$ and $h^\flat(\mu)$ are upper semicontinuous functions of $\mu$.
\end{lemma}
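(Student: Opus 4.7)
The plan is a direct deduction from Lemma~\ref{lem:rholipschitz}, which says that for each fixed $h$, both $\overline\rho_\mu(h)$ and $\underline\rho_\mu(h)$ are $1$-Lipschitz functions of $\mu$. Upper semicontinuity of $h^\flat$ and $h^\sharp$ will then follow by a routine threshold argument, using that $h^\flat$ and $h^\sharp$ are defined as cut-off values of nondecreasing functions of $h$.

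Concretely, let $\mu_n \to \mu$ in the transportation metric; I want to show $\limsup_n h^\flat(\mu_n) \leq h^\flat(\mu)$. Suppose for contradiction there exists $h_0 > h^\flat(\mu)$ with $h^\flat(\mu_n) > h_0$ for infinitely many $n$. For each such $n$, the definition of $h^\flat(\mu_n)$ as a supremum provides some $h > h_0$ with $\overline\rho_{\mu_n}(h) = 0$, and the monotonicity of $\overline\rho_{\mu_n}$ in $h$ (inherited from monotonicity of $\rho_\mu(\sigma, \cdot)$) then gives $\overline\rho_{\mu_n}(h_0) = 0$. Passing to a subsequence and applying Lemma~\ref{lem:rholipschitz} yields $\overline\rho_\mu(h_0) = \lim_n \overline\rho_{\mu_n}(h_0) = 0$. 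But any $h_0$ strictly greater than $h^\flat(\mu) = \sup\{h : \overline\rho_\mu(h) = 0\}$ must satisfy $\overline\rho_\mu(h_0) > 0$, a contradiction.

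The identical argument, with $\overline\rho$ replaced by $\underline\rho$ throughout, handles $h^\sharp$. There is no substantive obstacle: Lipschitz continuity in $\mu$ does all the work, and the convention $\sup \emptyset = -\infty$ causes no issue, since the choice of $h_0 > h^\flat(\mu)$ (real when $h^\flat(\mu) = -\infty$, and in $(h^\flat(\mu), \log|\A|]$ otherwise) in the contradiction argument is unconstrained.
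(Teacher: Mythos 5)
Your proposal is correct and takes essentially the same approach as the paper's proof: both derive upper semicontinuity from the $1$-Lipschitz continuity of $\overline\rho_\mu(h)$ and $\underline\rho_\mu(h)$ in $\mu$ (Lemma~\ref{lem:rholipschitz}) combined with monotonicity of $\rho$ in $h$. You phrase the threshold argument by contradiction while the paper argues directly with an arbitrary $\delta > 0$, but the two are just contrapositives of one another.
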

\begin{proof}
	Suppose $\mu_k \to \mu$, and let $h = \limsup_{k \to \infty} h^\sharp(\mu_{k})$. Then for any $\delta>0$ we have $h^\sharp(\mu_{k}) \geq h-\delta$ for infinitely many $k$, so $\underline\rho_{\mu_{k}}(h-\delta) = 0$ for infinitely many $k$, and by continuity of $\underline\rho$ (Lemma~\ref{lem:rholipschitz})
		\[ \underline\rho_\mu(h-\delta) = \lim_{k \to \infty} \underline\rho_{\mu_{k}}(h-\delta) = 0 . \]
	This implies $h-\delta \leq h^\sharp(\mu)$, so since $\delta>0$ is arbitrary, $\limsup_{k \to \infty} h^\sharp(\mu_{k}) \leq h^\sharp(\mu)$.
	
	The same argument shows that $h^\flat$ is upper semicontinuous.
\end{proof}

\begin{lemma}
\label{lem:fupperbound}
	For every $\mu$, $h^\sharp(\mu) \leq \overline{\h}^\ann(\mu)$ and $h^\flat(\mu) \leq \underline{\h}^\ann(\mu)$.
\end{lemma}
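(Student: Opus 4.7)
The plan is to relate the condition defining $h^\sharp(\mu), h^\flat(\mu)$ — namely, smallness of $\EE \rho_\mu(\sigma_n, h)$ — to a lower bound on $\EE \abs{\Omega(\sigma_n, \mu, \varepsilon)}$ via Markov's inequality. Presumably the annealed entropies are defined, by analogy with the $\f$ invariant, as
\[ \overline{\h}^\ann(\mu) = \inf_{\varepsilon>0} \limsup_{n \to \infty} \frac{1}{\abs{V_n}} \log \EE \abs{\Omega(\sigma_n, \mu, \varepsilon)} \]
and similarly for $\underline{\h}^\ann$ with $\liminf$.

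I fix $h < h^\sharp(\mu)$ (the case $h^\sharp(\mu) = -\infty$ is vacuous). By definition $\underline\rho_\mu(h) = 0$, so I can select a subsequence $\{n_k\}$ along which $\EE \rho_\mu(\sigma_{n_k}, h) \to 0$. The key observation is that whenever $\rho_\mu(\sigma, h) < \varepsilon$, the definition of $\rho_\mu$ and the monotonicity of $\Omega$ in $\varepsilon$ give $\abs{\Omega(\sigma, \mu, \varepsilon)} \geq e^{\abs{V}h}$. For any fixed $\varepsilon > 0$, Markov's inequality yields
\[ \PP\bigl\{\rho_\mu(\sigma_{n_k}, h) \geq \varepsilon\bigr\} \leq \EE \rho_\mu(\sigma_{n_k}, h)/\varepsilon \to 0 , \]
so for all large $k$ this probability is at most $1/2$. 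Combining these produces $\EE \abs{\Omega(\sigma_{n_k}, \mu, \varepsilon)} \geq \tfrac{1}{2} e^{\abs{V_{n_k}} h}$, and taking $\tfrac{1}{\abs{V_{n_k}}}\log$ gives $\limsup_n \frac{1}{\abs{V_n}} \log \EE \abs{\Omega(\sigma_n, \mu, \varepsilon)} \geq h$ along the subsequence. Since this holds for every $\varepsilon > 0$, I conclude $\overline{\h}^\ann(\mu) \geq h$; letting $h \nearrow h^\sharp(\mu)$ gives the first inequality.

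The case of $h^\flat(\mu)$ is parallel. Here $h < h^\flat(\mu)$ means $\overline\rho_\mu(h) = 0$, which — since $\rho_\mu \geq 0$ — forces $\EE \rho_\mu(\sigma_n, h) \to 0$ along the full sequence of $n$, not merely along a subsequence. Running the same Markov/monotonicity step then produces $\EE \abs{\Omega(\sigma_n, \mu, \varepsilon)} \geq \tfrac{1}{2} e^{\abs{V_n} h}$ for all sufficiently large $n$, which upgrades the $\limsup$ bound to a $\liminf$ bound and hence yields $\underline{\h}^\ann(\mu) \geq h^\flat(\mu)$.

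I do not expect a real obstacle: the whole argument is essentially one application of Markov's inequality together with the immediate implication $\rho_\mu(\sigma,h) < \varepsilon \Rightarrow \abs{\Omega(\sigma,\mu,\varepsilon)} \geq e^{\abs{V}h}$. The only point requiring attention is matching the $\limsup$/$\liminf$ quantifiers correctly between the $\rho$ side and the annealed-entropy side — $h^\sharp$ (defined via $\underline\rho$, hence a subsequential statement) pairs with $\overline{\h}^\ann$ (a $\limsup$), and $h^\flat$ (defined via $\overline\rho$, a full-sequence statement) pairs with $\underline{\h}^\ann$ (a $\liminf$).
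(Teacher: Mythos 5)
Your proof is correct and is essentially the paper's argument run in the contrapositive direction: the paper applies Markov's inequality to $\abs{\Omega(\sigma_n,\mu,\varepsilon)}$ to conclude $\underline\rho_\mu(\overline{\h}^\ann(\mu)+\delta)>0$ for every $\delta>0$, whereas you apply Markov's inequality to $\rho_\mu(\sigma_n,h)$ to conclude $\EE\abs{\Omega(\sigma_n,\mu,\varepsilon)}$ grows at exponential rate at least $h$ for every $h<h^\sharp(\mu)$. Both rest on a single use of Markov plus the equivalence $\rho_\mu(\sigma,h)<\varepsilon \Leftrightarrow \abs{\Omega(\sigma,\mu,\varepsilon)}\geq e^{\abs{V}h}$, and your guessed definitions of $\overline{\h}^\ann$ and $\underline{\h}^\ann$ match the ones the paper implicitly uses.
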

\begin{proof}
	
	For any $\delta>0$, by Markov's inequality
		\[  \PP\left\{ \frac{1}{\abs{V_n}} \log \abs*{\Omega(\sigma_n, \mu, \varepsilon)} > \overline{\h}^\ann(\mu) + \delta \right\} \leq \EE\abs*{\Omega(\sigma_n, \mu, \varepsilon)} \, e^{-\abs{V_n}(\overline{\h}^\ann(\mu) + \delta)} \]
	which, by definition of $\overline{\h}^\ann$, for all small enough $\varepsilon>0$ is bounded above by $e^{\abs{V_n}(\overline{\h}^\ann(\mu) + \delta/2)} e^{-\abs{V_n}(\overline{\h}^\ann(\mu) + \delta)} = e^{-\abs{V_n} \delta/2}$ for all large enough $n$. In particular, for all large enough $n$
	\begin{align*}
		\EE\rho_\mu(\sigma_n, \overline{\h}^\ann(\mu+\delta))
			&\geq \varepsilon\, \PP\left\{ \rho_\mu(\sigma_n, \overline{\h}^\ann(\mu+\delta)) \geq \varepsilon \right\}  \\
			&= \varepsilon \, \PP\left\{ \frac{1}{\abs{V_n}} \log \abs*{\Omega(\sigma_n, \mu, \varepsilon)} \leq \overline{\h}^\ann(\mu) + \delta \right\} \\
			&\geq \varepsilon \, (1-e^{-\abs{V_n} \delta/2}) .
	\end{align*}
	This implies that $\underline\rho_\mu(\overline{\h}^\ann(\mu)+\delta) > 0$, which, since $\delta>0$ is arbitrary, implies $\overline{\h}^\ann(\mu) \geq h^\sharp(\mu)$.
	
	The proof for lower entropies is the same, but with ``for all large enough $n$'' replaced by ``for infinitely many $n$'' and $\underline{\rho}_\mu$ replaced by $\overline{\rho}_\mu$.
\end{proof}


\subsubsection{Consequences of second-moment condition}

Recall the ``second moment condition'' is that $\overline{\h}^\ann(\lambda) \leq 2 \underline{\h}^\ann(\mu)$ for all self-joinings $\lambda$ of $\mu$.

\begin{lemma}
\label{lem:secondmoment}
	If $\mu$ satisfies the second-moment condition
	then
			\[ \lim_{\varepsilon \to 0} \liminf_{n \to \infty} \frac{1}{\abs{V_n}} \log \frac{\left(\EE \abs{\Omega(\sigma_n, \mu, \varepsilon)}\right)^2}{\EE\left[ \abs*{ \Omega(\sigma_n, \mu, \varepsilon)}^2 \right]} 
			= \lim_{\varepsilon \to 0} \limsup_{n \to \infty} \frac{1}{\abs{V_n}} \log \frac{\left(\EE \abs{\Omega(\sigma_n, \mu, \varepsilon)}\right)^2}{\EE\left[ \abs*{ \Omega(\sigma_n, \mu, \varepsilon)}^2 \right]} 
			= 0 . \]
\end{lemma}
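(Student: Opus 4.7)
The upper bound is easy: by Jensen's inequality $(\EE\abs{\Omega(\sigma_n,\mu,\varepsilon)})^2 \leq \EE\abs{\Omega(\sigma_n,\mu,\varepsilon)}^2$, so the logarithm of the ratio is non-positive for every $n$ and $\varepsilon$, and both the limsup and the liminf versions of the claim are automatically $\leq 0$. All the work goes into showing that for any $\eta > 0$ and any sufficiently small $\varepsilon$,
\[ \liminf_{n\to\infty} \frac{1}{\abs{V_n}} \log \frac{(\EE\abs{\Omega(\sigma_n,\mu,\varepsilon)})^2}{\EE\abs{\Omega(\sigma_n,\mu,\varepsilon)}^2} \geq -4\eta. \]

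The key idea is to identify a pair $(\mb{x},\mb{y}) \in \A^V \times \A^V$ with the single $(\A\times\A)$-microstate $\mb{z}(v) = (\mb{x}(v), \mb{y}(v))$: its joint empirical distribution $P_{\mb{z}}^{\sigma_n}$ is shift-invariant on $(\A\times\A)^\Gamma$ with coordinate marginals $P_{\mb{x}}^{\sigma_n}$ and $P_{\mb{y}}^{\sigma_n}$. Hence $\abs{\Omega(\sigma_n,\mu,\varepsilon)}^2$ is exactly the number of $(\A\times\A)$-microstates whose empirical distribution lies in the compact set $J_\varepsilon(\mu) \subset \Prob^\Gamma((\A\times\A)^\Gamma)$ of shift-invariant couplings with both marginals within $\varepsilon$ of $\mu$. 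As $\varepsilon \to 0^+$ the family $\{J_\varepsilon(\mu)\}$ decreases to the compact set $J(\mu)$ of self-joinings of $\mu$.

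Fix $\eta > 0$. For each $\lambda \in J(\mu)$, the second-moment hypothesis $\overline{\h}^\ann(\lambda) \leq 2\underline{\h}^\ann(\mu)$ provides some $\delta_\lambda > 0$ with $\limsup_n \frac{1}{\abs{V_n}}\log\EE\abs{\Omega(\sigma_n,\lambda,\delta_\lambda)} \leq 2\underline{\h}^\ann(\mu) + \eta$. The open balls $B(\lambda, \delta_\lambda)$ cover $J(\mu)$, so a finite subcover $\{B(\lambda_i, \delta_i)\}_{i=1}^N$ exists. The nested closed sets $J_\varepsilon(\mu) \setminus \bigcup_i B(\lambda_i,\delta_i)$ are a decreasing family of compacts with empty intersection, so $J_\varepsilon(\mu) \subset \bigcup_i B(\lambda_i,\delta_i)$ for all $\varepsilon$ below some $\varepsilon_0 > 0$. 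For such $\varepsilon$, every microstate contributing to $\abs{\Omega(\sigma_n,\mu,\varepsilon)}^2$ lies in $\Omega(\sigma_n, \lambda_i, \delta_i)$ for some $i$, so
\[ \EE\abs{\Omega(\sigma_n,\mu,\varepsilon)}^2 \leq \sum_{i=1}^N \EE\abs{\Omega(\sigma_n,\lambda_i,\delta_i)} \leq N \, e^{\abs{V_n}(2\underline{\h}^\ann(\mu) + 2\eta)} \]
for all sufficiently large $n$.

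Combining with the lower bound $\EE\abs{\Omega(\sigma_n,\mu,\varepsilon)} \geq e^{\abs{V_n}(\underline{\h}^\ann(\mu) - \eta)}$, valid for every $\varepsilon > 0$ and all large $n$ by the definition of $\underline{\h}^\ann$, yields a log-ratio at least $-\tfrac{\log N}{\abs{V_n}} - 4\eta$ whenever $\varepsilon < \varepsilon_0(\eta)$, which gives the displayed inequality above. Letting $\eta \to 0^+$ completes the proof. The main obstacle is purely quantifier bookkeeping: the radii $\delta_\lambda$ are chosen pointwise for each self-joining but must be coordinated with the single radius $\varepsilon$ that bounds the marginals of the joint empirical distribution; compactness of $J(\mu)$ in the weak topology is exactly what makes this coordination possible.
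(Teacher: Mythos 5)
Your proof is correct and follows essentially the same strategy as the paper's: the easy Jensen upper bound, the lower bound $\EE\abs{\Omega(\sigma_n,\mu,\varepsilon)}\geq e^{\abs{V_n}(\underline{\h}^\ann(\mu)-\eta)}$ coming from the definition of $\underline{\h}^\ann$, and an upper bound on $\EE\abs{\Omega(\sigma_n,\mu,\varepsilon)}^2$ obtained by viewing pairs of microstates as $(\A\times\A)$-microstates whose empirical distributions are near self-joinings of $\mu$. The only difference is that the paper argues by contradiction and simply asserts the identity $\inf_\varepsilon\limsup_n\frac{1}{\abs{V_n}}\log\EE\left[\abs{\Omega(\sigma_n,\mu,\varepsilon)}^2\right]=\sup_\lambda\overline{\h}^\ann(\lambda)$ over self-joinings, whereas you give the direct compactness-and-union-bound argument that proves the one inequality of that identity actually needed here.
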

\begin{proof}
	Note that $\frac{\left(\EE \abs{\Omega(\sigma_n, \mu, \varepsilon)}\right)^2}{\EE\left[ \abs*{ \Omega(\sigma_n, \mu, \varepsilon)}^2 \right]} \leq 1$ by Jensen's inequality, so
		\[ \limsup_{\varepsilon \to 0} \liminf_{n \to \infty} \frac{1}{\abs{V_n}} \log \frac{\left(\EE \abs{\Omega(\sigma_n, \mu, \varepsilon)}\right)^2}{\EE\left[ \abs*{ \Omega(\sigma_n, \mu, \varepsilon)}^2 \right]}
		\leq \limsup_{\varepsilon \to 0} \limsup_{n \to \infty} \frac{1}{\abs{V_n}} \log \frac{\left(\EE \abs{\Omega(\sigma_n, \mu, \varepsilon)}\right)^2}{\EE\left[ \abs*{ \Omega(\sigma_n, \mu, \varepsilon)}^2 \right]}
		\leq 0 , \]
	and it suffices to show that
		\[ \liminf_{\varepsilon \to 0} \liminf_{n \to \infty} \frac{1}{\abs{V_n}} \log \frac{\left(\EE \abs{\Omega(\sigma_n, \mu, \varepsilon)}\right)^2}{\EE\left[ \abs*{ \Omega(\sigma_n, \mu, \varepsilon)}^2 \right]} \geq 0. \]
		
	 So assume for the sake of contradiction that for some $\eta > 0$ there are arbitrarily small $\varepsilon>0$ with
		\[ \liminf_{n \to \infty} \frac{1}{\abs{V_n}} \log \frac{\left(\EE \abs{\Omega(\sigma_n, \mu, \varepsilon)}\right)^2}{\EE\left[ \abs*{ \Omega(\sigma_n, \mu, \varepsilon)}^2 \right]} \leq -\eta . \]
	By definition of $\underline{\h}^\ann$, for every $\delta>0$ if $\varepsilon$ is small enough then
		\[ \frac{1}{\abs{V_n}} \log \EE \abs{\Omega(\sigma_n, \mu, \varepsilon)} \geq \underline{\h}^\ann(\mu) - \delta \]
	for all large $n$, so
		\[ \limsup_{n \to \infty} \frac{1}{\abs{V_n}} \log \EE\left[ \abs*{ \Omega(\sigma_n, \mu, \varepsilon)}^2 \right] \geq 2 (\underline{\h}^\ann(\mu) - \delta) + \eta , \]
	and
		\[ \inf_{\varepsilon>0} \limsup_{n \to \infty} \frac{1}{\abs{V_n}} \log \EE\left[ \abs*{ \Omega(\sigma_n, \mu, \varepsilon)}^2 \right] \geq 2 \underline{\h}^\ann(\mu) + \eta . \]
	The left-hand side is equal to the supremum of $\overline{\h}^\ann(\lambda)$ where $\lambda$ ranges over self-joinings of $\mu$, so this implies that the second-moment criterion is violated.
	
	Note also that the inequality does hold if $\eta=0$, and the supremum is attained for some $\lambda$, so there must be a self-joining $\lambda$ with $\overline{\h}^\ann(\lambda) = 2 \underline{\h}^\ann(\mu)$.
\end{proof}

\begin{prop}
\label{prop:secondmomentgeneral}
	If $\mu$ satisfies the second-moment condition then $h^\flat(\mu) = \underline{\h}^\ann(\mu)$ and $h^\sharp(\mu) = \overline{\h}^\ann(\mu)$.
\end{prop}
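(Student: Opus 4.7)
The upper bounds $h^\flat(\mu)\leq \underline{\h}^\ann(\mu)$ and $h^\sharp(\mu)\leq \overline{\h}^\ann(\mu)$ are already given by Lemma~\ref{lem:fupperbound}, so the task is to prove the reverse inequalities. The plan is a one-sided second-moment argument: Paley--Zygmund lower bounds $|\Omega(\sigma_n,\mu,\varepsilon)|$ in probability only to within an exponentially small factor (that factor being essentially the second-moment ratio in Lemma~\ref{lem:secondmoment}), but the Lipschitz concentration of $\rho_\mu(\cdot,h)$ in Lemma~\ref{lem:rhoconcentration} beats any polynomial $\exp(-n\eta)$ once $\eta$ is sufficiently small compared to the tolerance $\tau$, so a rare event suffices to pin down the mean.

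Consider $h^\sharp(\mu)\geq \overline{\h}^\ann(\mu)$. Fix any $h<\overline{\h}^\ann(\mu)$ and any $\tau>0$; the goal is $\underline\rho_\mu(h)\leq \tfrac{3\tau}{2}$. Choose $\eta<C\tau^2/8$, then invoke Lemma~\ref{lem:secondmoment} to pick $\varepsilon\in(0,\tau/2)$ small enough that
\[ \liminf_{n\to\infty}\frac{1}{|V_n|}\log\frac{(\EE|\Omega(\sigma_n,\mu,\varepsilon)|)^2}{\EE[|\Omega(\sigma_n,\mu,\varepsilon)|^2]}\geq -\eta. \]
Pick $h'\in(h,\overline{\h}^\ann(\mu))$. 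By the definition of $\overline{\h}^\ann$, there are infinitely many $n$ with $\EE|\Omega(\sigma_n,\mu,\varepsilon)|\geq e^{|V_n|h'}$; for large such $n$ the ratio bound above also holds. Paley--Zygmund gives
\[ \PP\!\left\{|\Omega(\sigma_n,\mu,\varepsilon)|\geq \tfrac{1}{2}\EE|\Omega(\sigma_n,\mu,\varepsilon)|\right\}\geq \tfrac{1}{4}e^{-|V_n|\eta}, \]
and on that event $\rho_\mu(\sigma_n,h)\leq\varepsilon$ for all $n$ large enough that $h'-\tfrac{\log 2}{|V_n|}\geq h$. Meanwhile Lemma~\ref{lem:rhoconcentration} yields
\[ \PP\!\left\{\rho_\mu(\sigma_n,h)<\EE\rho_\mu(\sigma_n,h)-\tau\right\}\leq 2e^{-C|V_n|\tau^2/4}. \]
Because $C\tau^2/4>\eta$, for all large $n$ the concentration bound is strictly smaller than $\tfrac{1}{4}e^{-|V_n|\eta}$, so the two events cannot be disjoint. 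Hence some $\sigma_n$ satisfies both $\rho_\mu(\sigma_n,h)\leq\varepsilon$ and $\rho_\mu(\sigma_n,h)\geq\EE\rho_\mu(\sigma_n,h)-\tau$, giving $\EE\rho_\mu(\sigma_n,h)\leq \varepsilon+\tau\leq \tfrac{3\tau}{2}$ for infinitely many $n$. Since $\tau$ was arbitrary, $\underline\rho_\mu(h)=0$, hence $h^\sharp(\mu)\geq h$, and then $h^\sharp(\mu)\geq \overline{\h}^\ann(\mu)$.

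The argument for $h^\flat(\mu)\geq \underline{\h}^\ann(\mu)$ is identical in structure with \emph{every} ``infinitely many $n$'' upgraded to ``all large $n$'': if $h<\underline{\h}^\ann(\mu)$, then by definition of $\underline{\h}^\ann$ the bound $\EE|\Omega(\sigma_n,\mu,\varepsilon)|\geq e^{|V_n|h'}$ holds for all large $n$, and by Lemma~\ref{lem:secondmoment} the second-moment ratio is at least $e^{-|V_n|\eta}$ for all large $n$ when $\varepsilon$ is small. Running the same Paley--Zygmund / Lipschitz-concentration comparison then shows $\EE\rho_\mu(\sigma_n,h)\leq \varepsilon+\tau$ for all large $n$, whence $\overline\rho_\mu(h)\leq\varepsilon+\tau$, and sending $\varepsilon,\tau\to0$ gives $h^\flat(\mu)\geq h$, hence $h^\flat(\mu)\geq \underline{\h}^\ann(\mu)$.

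The only delicate step is the choice of constants: one must fix $\tau$ first, then $\eta<C\tau^2/8$, then $\varepsilon$ small enough for the second-moment lemma to apply, and then pass to $n$ large enough that the subexponential error terms ($\log 2/|V_n|$, the $2$ in the concentration bound, etc.) are absorbed. Distinguishing the two entropies amounts solely to whether the $\EE|\Omega|$ lower bound, the second-moment ratio, and the resulting statement about $\EE\rho_\mu$ hold on a subsequence or eventually; all three aspects line up because Lemma~\ref{lem:secondmoment} supplies a ``for all large $n$'' bound (via $\liminf$) regardless.
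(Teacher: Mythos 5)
Your proof is correct and takes essentially the same route as the paper's: combine Paley--Zygmund (with the ratio controlled by Lemma~\ref{lem:secondmoment}) to show that $\{\rho_\mu(\sigma_n,h)\leq\varepsilon\}$ has at worst exponentially small probability with arbitrarily small rate, and play this against the $\exp(-C|V_n|t^2/4)$ concentration of Lemma~\ref{lem:rhoconcentration} to pin down $\EE\rho_\mu(\sigma_n,h)$. The only cosmetic difference is bookkeeping: the paper fixes a tolerance $t$ and derives the constraint $t\leq 2\sqrt{\eta/C}$, while you fix $\tau$ up front and choose $\eta<C\tau^2/8$ so the two probability bounds clash for large $n$.
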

\begin{proof}
	By the Paley-Zygmund inequality, for any $\delta \in (0,1)$
		\[ \PP \left\{ \abs*{\Omega(\sigma_n, \mu, \varepsilon)} > (1-\delta) \EE \abs*{\Omega(\sigma_n, \mu, \varepsilon)} \right\} \geq \delta^2 \frac{\EE[\abs*{\Omega(\sigma_n, \mu, \varepsilon)}]^2}{\EE[\abs*{\Omega(\sigma_n, \mu, \varepsilon)}^2]} . \]
	By Lemma~\ref{lem:secondmoment}, the second-moment condition implies that for any $\eta>0$, for arbitrarily small $\varepsilon>0$ we have
		\[ \frac{\EE[\abs*{\Omega(\sigma_n, \mu, \varepsilon)}]^2}{\EE[\abs*{\Omega(\sigma_n, \mu, \varepsilon)}^2]} \geq e^{-\eta \abs{V_n}} \]
	for all large enough $n$.
	\begin{enumerate}
		\item (Lower entropies are equal)
		By definition we have
		\[  \EE \abs*{\Omega(\sigma_n, \mu, \varepsilon)} \geq e^{\abs{V_n} (\underline{\h}^\ann(\mu) - \delta)} \]
	for all large $n$. Hence for any $\eta, \delta > 0$, for arbitrarily small $\varepsilon>0$
		\[ \PP\{ \rho_\mu(\sigma_n, \underline{\h}^\ann(\mu)-2\delta) \leq \varepsilon\} \geq \PP\{ \tfrac{1}{\abs{V_n}} \log \abs{\Omega(\sigma_n, \mu, \varepsilon)} > \underline{\h}^\ann(\mu) - \delta\} \geq \delta^2 e^{-\eta \abs{V_n}} \]
	for all large $n$. If for some $t>0$ we had $\overline{\rho}_\mu(\underline{\h}^\ann(\mu)-2\delta) > \varepsilon+t$, then Lemma~\ref{lem:rhoconcentration} would imply
	\begin{align*}
		\PP\{ \rho_\mu(\sigma_n, \underline{\h}^\ann(\mu)-2\delta) \leq \varepsilon\}
			&\leq 2 \exp\left( \frac{-C\abs{V_n} t^2}{4} \right)
	\end{align*}
	for infinitely many $n$, so the above lower bound on the same probability implies $t^2 \leq 4\eta/C$. This means that $\overline{\rho}_\mu(\underline{\h}^\ann(\mu)-2\delta) \leq \varepsilon+2\sqrt{\eta/C}$. Taking $\varepsilon$ and then $\eta$ to 0 gives $\overline{\rho}_\mu(\underline{\h}^\ann(\mu)-2\delta)$, which means $h^\flat(\mu) \geq \underline{\h}^\ann(\mu)-2\delta$. Since $\delta>0$ is arbitrary, this shows $h^\flat(\mu) = \underline{\h}^\ann(\mu)$.
	
		\item (Upper entropies are equal)
		By definition we have
		\[  \EE \abs*{\Omega(\sigma_n, \mu, \varepsilon)} \geq e^{\abs{V_n} (\overline{\h}^\ann(\mu) - \delta)} \]
	for infinitely many $n$. Hence for any $\eta, \delta > 0$, for arbitrarily small $\varepsilon>0$
		\[ \PP\{ \rho_\mu(\sigma_n, \overline{\h}^\ann(\mu)-2\delta) \leq \varepsilon\} \geq \PP\{ \tfrac{1}{\abs{V_n}} \log \abs{\Omega(\sigma_n, \mu, \varepsilon)} > \overline{\h}^\ann(\mu) - \delta\} \geq \delta^2 e^{-\eta \abs{V_n}} \]
	for infinitely many $n$. If for some $t>0$ we had $\underline{\rho}_\mu(\overline{\h}^\ann(\mu)-2\delta) > \varepsilon+t$, then Lemma~\ref{lem:rhoconcentration} would imply
	\begin{align*}
		\PP\{ \rho_\mu(\sigma_n, \overline{\h}^\ann(\mu)-2\delta) \leq \varepsilon\}
			&\leq 2 \exp\left( \frac{-C\abs{V_n} t^2}{4} \right)
	\end{align*}
	for all large $n$, so $t^2 \leq 4\eta/C$. This means that $\underline{\rho}_\mu(\overline{\h}^\ann(\mu)-2\delta) \leq \varepsilon+2\sqrt{\eta/C}$. Taking $\varepsilon$ and then $\eta$ to 0 gives $\underline{\rho}_\mu(\overline{\h}^\ann(\mu)-2\delta)$, which means $h^\sharp(\mu) \geq \overline{\h}^\ann(\mu)-2\delta$. Since $\delta>0$ is arbitrary, this shows $h^\sharp(\mu) = \overline{\h}^\ann(\mu)$. \qedhere
	\end{enumerate}
\end{proof}

In particular, since in the case of $\PP^\unif$ the upper and lower annealed entropies are equal and $\f(\mu \times \mu) = 2\f(\mu)$, we get the following simpler version:

\begin{cor}
\label{cor:2ndmomentf}
	If the product self-joining of $\mu$ has maximal $\f$ invariant, then $\f(\mu) = h^\flat(\mu) = h^\sharp(\mu)$.
\end{cor}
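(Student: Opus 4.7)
The plan is to observe that this is a direct specialization of Proposition~\ref{prop:secondmomentgeneral} to the uniform permutation model, once we check that the hypothesis translates correctly.

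First, I would recall the two facts specific to $\PP^\unif$ mentioned in the paragraph preceding the corollary: the upper and lower annealed entropies coincide and both equal the $\f$ invariant, i.e.\ $\overline{\h}^\ann(\nu) = \underline{\h}^\ann(\nu) = \f(\nu)$ for every $\nu$ (this is the content of the Bowen limsup$=$liminf fact cited in Section~\ref{sec:finvariant}), and the product formula $\f(\mu \times \mu) = 2\f(\mu)$.

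Next, I would translate the hypothesis. The hypothesis says that $\mu \times \mu$ attains the maximum of $\f(\lambda)$ as $\lambda$ ranges over self-joinings of $\mu$. Combined with the product formula, this gives
\[ \overline{\h}^\ann(\lambda) \;=\; \f(\lambda) \;\leq\; \f(\mu \times \mu) \;=\; 2\f(\mu) \;=\; 2\,\underline{\h}^\ann(\mu) \]
for every self-joining $\lambda$ of $\mu$, which is precisely the second-moment condition as stated before Lemma~\ref{lem:secondmoment}.

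Finally, I would invoke Proposition~\ref{prop:secondmomentgeneral} to conclude
\[ h^\flat(\mu) \;=\; \underline{\h}^\ann(\mu) \;=\; \f(\mu) \quad\text{and}\quad h^\sharp(\mu) \;=\; \overline{\h}^\ann(\mu) \;=\; \f(\mu), \]
which is the desired equality. There is no substantial obstacle here; the entire proof is a one-line application of Proposition~\ref{prop:secondmomentgeneral}, with the only bookkeeping being that the two $\ann$-entropies collapse to $\f$ in the uniform case so that the second-moment condition reduces to the cleaner statement about maximality of $\f(\mu\times\mu)$.
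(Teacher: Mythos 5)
Your proof is correct and matches the paper's approach exactly: the paper presents this corollary as an immediate specialization of Proposition~\ref{prop:secondmomentgeneral}, justified in the preceding sentence by the same two observations you make (that $\overline{\h}^\ann = \underline{\h}^\ann = \f$ for $\PP^\unif$ and that $\f(\mu\times\mu)=2\f(\mu)$). There is nothing missing.
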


%


\section{Typical local limits for nearest-neighbor interactions}
\label{sec:locallimits}

First, we introduce three modes of convergence. Though the notation is different, these all essentially appear in \cite[Definition 2.3]{montanari2012}. A slight difference is that we consider our permutation graphs to come with directed and labeled edges, which fixes a particular identification of a vertex neighborhood with a neighborhood of the identity in $\Gamma$.

If $\sigma \in \Hom(\Gamma, \Sym(V))$ and $\zeta \in \Prob(\A^V)$, the average empirical distribution of $\zeta$ is defined by
	\[ P_\zeta^\sigma = \frac{1}{\abs{\A^V}} \sum_{\mb{x} \in \A^V} \zeta\{\mb{x}\} \, P_{\mb{x}}^\sigma \in \Prob^\Gamma(\A^\Gamma). \]
If $\{\sigma_n \in \Hom(\Gamma, \Sym(V_n))\}_{n=1}^\infty$ is a sequence of homomorphisms and $\zeta_n \in \Prob(\A^{V_n})$ for each $n$, then we say the sequence $\{\zeta_n\}_{n=1}^\infty$ converges to $\mu \in \Prob^\Gamma(\A^\Gamma)$ locally on average (LOA) if $P_{\zeta_n}^{\sigma_n}$ converges to $\mu$ in the weak topology.

In the same setting, we say the local limit of $\{\zeta_n\}_{n=1}^\infty$ is $\mu$ if for every open neighborhood $\calO$ of $\mu$ in $\Prob(\A^\Gamma)$, 
	\[ \lim_{n \to \infty} \frac{1}{\abs{V_n}} \abs*{\{ v \in V_n \st (\Pi_v^{\sigma_n})_* \zeta_n \in \calO\}} = 1. \]
That is, we require most of the local marginals of $\zeta_n$ to be close to $\mu$, not just their average. The individual local marginals are not necessarily shift-invariant.

More generally, we say the local limit is $\mathfrak{m} \in \Prob(\Prob(\A^\Gamma))$ if
	\[ \lim_{n \to \infty} \frac{1}{\abs{V_n}} \sum_{v \in V} \delta_{(\Pi_v^{\sigma_n})_*\zeta_n} = \mathfrak{m} . \]
If the local limit in this sense is a point mass $\delta_\mu$, we just say the local limit is $\mu$; this is equivalent to the previous definition. This mode of convergence is easier to work with because we can always pass to subsequential limits. In examples below, we will then show that the subsequential limits are point masses. \\

Given an energy function $u \colon \A^\Gamma \to \RR$, to each $\sigma \in \Hom(\Gamma, \Sym(V))$ we associate the finitary Gibbs state $\xi \in \Prob(\A^V)$ defined by
	\[ \xi\{\mb{x}\} \propto \exp \left( - \abs{V} \int u\, dP_{\mb{x}}^\sigma \right) = \exp \left( -\sum_{v \in V} u(\Pi_v^\sigma \mb{x}) \right). \]
This is also called the Boltzmann distribution. We interpret $u(\Pi_v^\sigma \mb{x})$ as the energy of interactions attributed to the vertex $v$, so the sum $\sum_{v \in V} u(\Pi_v^\sigma \mb{x})$ is the total energy. Note that the Gibbs state prefers individual microstates with low energy.

We say $u$ is a nearest-neighbor interaction if it is of the form
	\[ u(\mb{x}) = h(\mb{x}(e)) + \frac{1}{2} \sum_{\gamma \sim e} J(\mb{x}(e),\, \mb{x}(\gamma)) \]
for a function $h \colon \A \to \RR$ and a symmetric function $J \colon \A^2 \to \RR$. We include the factor of $\frac{1}{2}$ to attribute to the spin at the identity $e$ half the energy due to interactions with its neighbors.

Given an energy function $u$, we say the typical LOA limit is $\mu$ if for a typical sofic approximation $\Sigma$ (in the sense of Theorem~\ref{thm:main}) the finitary Gibbs states converge LOA to $\mu$. We use analogous terminology for the other modes of convergence. This makes sense for any random sofic approximation, but we mostly consider the uniform permutation model.

A state $\mu \in \Prob^\Gamma(\A^\Gamma)$ is called $\Sigma$-equilibrium if it is a global maximizer of the (upper) $\Sigma$-pressure defined by $\overline{\press}_\Sigma(\mu) = \overline{\h}_\Sigma(\mu) - u(\mu)$, where $u(\mu) = \int u\, d\mu$ is the average energy per site. The pressure captures the competition between the Boltzmann weight of any individual good model for $\mu$, which is determined by $u(\mu)$, and the number of good models for $\mu$, which is measured by $\overline{\h}_\Sigma(\mu)$.

For simplicity we will only consider random sofic approximations for which, like the uniform permutation model, the upper and lower annealed entropies are equal. This is a natural assumption for computing limits, since we do not want different behavior on different subsequences. In this case we will call maximizers of $\overline{\h}^\ann - u = \underline{\h}^\ann - u$ annealed-equilibrium or, in the uniform case, $\f$-equilibrium.

A Gibbs measure is called extreme if it is an extreme point in the convex set of all Gibbs measures for the interaction. This is equivalent to being tail-trivial \cite[Theorem~7.7]{georgii2011} which is equivalent to non-reconstruction \cite[Proposition~15]{mossel2001}.

\begin{prop}
\label{prop:locallimit}
	Consider any exponentially concentrated random sofic approximation such that $\overline{\h}^\ann \equiv \underline{\h}^\ann$.
	
	Let $\calE \subset \Prob^\Gamma(\A^\Gamma)$ be the set of states which are both annealed-equilibrium and Gibbs for a given nearest-neighbor interaction.
	
	Suppose every element of $\calE$ satisfies the second-moment criterion. Then, over a typical sofic approximation, any subsequential LOA limit of the finitary Gibbs states is a mixture of states in $\calE$.
	
	If also every element of $\calE$ is extreme, then any subsequential local limit $\mathfrak{m} \in \Prob(\Prob(\A^\Gamma))$ is supported on the convex hull of $\calE$.
\end{prop}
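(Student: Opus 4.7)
The plan is to combine Theorem~\ref{thm:main} and Proposition~\ref{prop:secondmomentgeneral} to identify, for typical $\Sigma$, the set of $\Sigma$-equilibrium Gibbs measures with $\calE$; then to apply \cite[Theorem~6.5]{shriver2023a}, which links LOA convergence of finitary Gibbs states to $\Sigma$-equilibrium. The extremality hypothesis then gives the local-limit refinement via uniqueness of extremal decomposition in the Gibbs simplex.

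Write $\h^\ann$ for the common value of $\overline{\h}^\ann$ and $\underline{\h}^\ann$, and $\press^\ann = \h^\ann - u$ for the annealed pressure. Let $\{S_n\}$ be the sequence provided by Theorem~\ref{thm:main} and take $\sigma_n \in S_n$. For each $\mu \in \calE$, Proposition~\ref{prop:secondmomentgeneral} (applicable by the second-moment hypothesis) gives $h^\sharp(\mu) = h^\flat(\mu) = \h^\ann(\mu)$, so $\press_\Sigma(\mu) = \press^\ann(\mu)$. Combined with Lemma~\ref{lem:fupperbound}, for any $\nu \in \Prob^\Gamma(\A^\Gamma)$:
\[
\press_\Sigma(\nu) = h^\sharp(\nu) - u(\nu) \leq \press^\ann(\nu) \leq \press^\ann(\mu) = \press_\Sigma(\mu),
\]
where the last inequality uses that $\mu$ is annealed-equilibrium. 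So every $\mu \in \calE$ is $\Sigma$-equilibrium; conversely, any $\Sigma$-equilibrium $\nu$ forces equality throughout, hence $\press^\ann(\nu) = \press^\ann(\mu)$, making $\nu$ annealed-equilibrium. Intersecting with the Gibbs measures, $\Sigma$-equilibrium Gibbs states coincide with $\calE$.

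By \cite[Theorem~6.5]{shriver2023a}, any subsequential LOA limit of the finitary Gibbs states is $\Sigma$-equilibrium; it is also a shift-invariant Gibbs measure on $\Gamma$, since the DLR equations pass to local-weak limits on the tree once $\sigma_n$ is locally tree-like. Thus the LOA limit lies in $\calE$, which is trivially a mixture of $\calE$-states; this is the first claim. For the second, let $\mathfrak{m}$ be a subsequential local limit, with barycenter the LOA limit $\mu \in \calE$. The same local-weak-limit argument shows $\mathfrak{m}$ is supported on shift-invariant Gibbs measures on $\Gamma$. Under the extremality hypothesis, $\mu$ is an extreme point of the Gibbs simplex, so the decomposition $\mu = \int \nu \, d\mathfrak{m}(\nu)$ into Gibbs measures must be trivial: $\mathfrak{m} = \delta_\mu$. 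Hence $\mathfrak{m}$ is supported on $\{\mu\} \subseteq \calE \subseteq \conv(\calE)$.

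The main obstacle is verifying that $\mathfrak{m}$ is genuinely supported on infinite-volume Gibbs measures on $\Gamma$: one must carefully pass from local marginals of finitary Gibbs states on random tree-like graphs to the DLR equations on the infinite tree, controlling boundary effects along the way. Once that is in hand, the identification $\mathfrak{m} = \delta_\mu$ follows from the definition of extremality in a convex set, and the first claim requires only the Gibbs-ness of the LOA limit together with the $\calE = \Sigma$-equilibrium Gibbs identification proved above.
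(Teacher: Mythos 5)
Your proof of the first (LOA) claim follows the paper's route — establish that the $\Sigma$-equilibrium Gibbs states coincide with $\calE$ via the pressure-inequality chain, then invoke \cite[Theorem~6.5]{shriver2023a}. One small imprecision: you write that the LOA limit ``lies in $\calE$,'' but the proposition (and Theorem~6.5) only give that it is a \emph{mixture} of states in $\calE$; that set need not be closed under convex combinations, since annealed entropy need not be affine.

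The second (local-limit) claim has a genuine gap. You argue that the barycenter $\mu$ of $\mathfrak{m}$ is an extreme point of the Gibbs simplex and therefore $\mathfrak{m} = \delta_\mu$. But $\mu$ is the LOA limit, which you just showed is a \emph{mixture} of elements of $\calE$; the hypothesis is that each element of $\calE$ is extreme, not that their mixtures are. So $\mu$ is typically \emph{not} extreme, and $\mathfrak{m}$ need not be a point mass — indeed, the Ising example in the paper has LOA limit $(\mu^+ + \mu^-)/2$ (not extreme) and local limit $\mathfrak{m}$ concentrated on $\{\mu^+, \mu^-\}$, not equal to $\delta_{(\mu^+ + \mu^-)/2}$. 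Note also that the proposition's conclusion is only that $\mathfrak{m}$ is \emph{supported on} $\conv(\calE)$, not that it is a point mass. The correct argument, sketched tersely in the paper, runs through the Choquet-simplex structure of the set of Gibbs measures: $\mathfrak{m}$ is supported on Gibbs measures, its barycenter $\bar\mu$ lies in $\conv(\calE)$, and since the elements of $\calE$ are extreme, uniqueness of the barycentric (extremal) decomposition of $\bar\mu$ forces every $\nu$ in the support of $\mathfrak{m}$ to have its own extremal decomposition supported on $\calE$, i.e.\ $\nu \in \conv(\calE)$.
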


It is probably true that every annealed-equilibrium state is automatically Gibbs, so the definition of $\calE$ is probably redundant. If the interaction has no hard constraints, this follows from \cite[Theorem A]{shriver2022a}. On the other hand, \cite{barbieri2022a} considered much more general interactions but did not allow for random sofic approximations.

In the uniform case, this should generally provide a tractable approach to computing local limits because $\f$-equilibrium states must be Markov chains \cite[Lemma~6.9]{shriver2023a}, the set of completely homogeneous Markov chains which are Gibbs states is parametrized by ``boundary laws'', and the $\f$-pressure of a Gibbs Markov chain admits a relatively simple expression in terms of its boundary law. See Section~\ref{sec:pottslimit} below.

In examples below, we will actually show that the subsequential local limits are all the same mixture of states in $\calE$, so that the limits exist.

\begin{proof}[Proof of Proposition \ref{prop:locallimit}]
	Since every element of $\calE$ is annealed-equilibrium and satisfies the second-moment criterion, $\calE$ contains all $\Sigma$-equilibrium measures for a typical sofic approximation $\Sigma$, and for any subsequence of $\Sigma$. This is because if $\mu \in \calE$ and $\nu \in \Prob^\Gamma(\A^\Gamma) \setminus \calE$ then, if $\Sigma'$ is a subsequence of $\Sigma$,
	\begin{align*}
		\overline{\h}_{\Sigma'}(\nu)- u(\nu)
			&\leq \overline{\h}_{\Sigma}(\nu) - u(\nu) \tag{$\Sigma'$ subsequence}\\
			&= \h^\sharp(\nu) - u(\nu) \tag{Thm.~\ref{thm:main}}\\
			&\leq \overline{\h}^\ann(\nu) - u(\nu) \tag{Lemma~\ref{lem:fupperbound}}\\
			&< \underline{\h}^\ann(\mu) - u(\mu) \tag{$\nu \not\in \calE$, $\overline{\h}^\ann \equiv \underline{\h}^\ann$}\\
			&= \h^\flat(\mu) - u(\mu) \tag{Cor.~\ref{cor:2ndmomentf}}\\
			&= \underline{\h}_\Sigma(\mu) - u(\mu) \tag{Thm.~\ref{thm:main}} \\
			&\leq \underline{\h}_{\Sigma'}(\mu) - u(\mu) \tag{$\Sigma'$ subsequence} \\
			&\leq \overline{\h}_{\Sigma'}(\mu) - u(\mu) \tag{$\underline{\h} \leq \overline{\h}$}
	\end{align*}
	The claim about LOA limits then follows from \cite[Theorem~6.5]{shriver2023a}. Note that we are only applying this theorem to individual deterministic sofic approximations, so we do not need to worry about the rather strict definition of local limits over random sofic approximations given in \cite{shriver2023a}.
	
	Now suppose all elements of $\calE$ are extreme. If $\mathfrak{m}$ is a subsequential local limit then it is supported on the set of Gibbs measures and its barycenter $\bar\mu$ is the LOA limit over the same subsequence. Since, as we showed above, $\bar\mu$ is in the convex hull of $\calE$, $\mathfrak{m}$ must be supported on the convex hull of $\calE$.
\end{proof}

The following subsections work out a few examples in the uniform case.

\subsection{Ferromagnetic Ising model}

Let $\A = \{-1, +1\}$. The energy function $u$ for the Ising model can be defined by taking $h \equiv 0$ and $J(\ta_1, \ta_2) = -J\ta_1 \ta_2$ for some fixed interaction strength $J > 0$.

For low interaction strengths ($J \leq J_u = \acoth(2r-1) = \frac{1}{2} \log \frac{r}{r-1}$) there is a unique Gibbs measure. For greater values of $J$ there are exactly three Gibbs Markov chains: one is the free-boundary state $\mu^{FB}$, which we already considered in Section~\ref{sec:KSintro} above. There are also plus- and minus-boundary states $\mu^+, \mu^-$ which are biased towards $+1$ and $-1$ respectively, and each of which can be obtained from the other by interchanging $+1$ and $-1$.

\begin{prop}
\label{prop:isinglimit}
	For the uniform permutation model, the typical local limit of the Ising model is $(\mu^+ + \mu^-)/2$.
\end{prop}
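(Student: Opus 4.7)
The plan is to apply Proposition~\ref{prop:locallimit}. By \cite[Lemma~6.9]{shriver2023a}, every $\f$-equilibrium state is a Markov chain, so the set $\calE$ of states which are simultaneously $\f$-equilibrium and Gibbs for the Ising interaction is contained in $\{\mu^+, \mu^-, \mu^{FB}\}$, with the convention that $\mu^+ = \mu^- = \mu^{FB}$ when $J \leq J_u$. The states $\mu^\pm$ are pure phases, hence tail-trivial, hence satisfy the second-moment criterion by \cite[Corollary~16]{shriver2021}. For $\mu^{FB}$, parametrized by $\theta = \tanh J$: if $\theta^2(2r-1) \leq 1$ then $\mu^{FB}$ is tail-trivial \cite{bleher1995} and so satisfies the second-moment criterion, while if $\theta^2(2r-1) > 1$ then Corollary~\ref{cor:isingtypical} gives $h^\sharp(\mu^{FB}) = -\infty$, so $\mu^{FB}$ has $\Sigma$-pressure $-\infty$ over a typical $\Sigma$ and therefore cannot lie in $\calE$. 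Either way every element of $\calE$ is extremal and satisfies the second-moment criterion, so the hypotheses of Proposition~\ref{prop:locallimit} are met.

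Next I would identify $\calE$ exactly. When $J \leq J_u$ there is a unique Gibbs measure, so $\calE = \{\mu^{FB}\}$ and trivially $\mu^{FB} = (\mu^+ + \mu^-)/2$. When $J > J_u$ I claim $\calE = \{\mu^+, \mu^-\}$. By the $\pm$-symmetry, $\mu^+$ and $\mu^-$ have equal $\f$-pressure, so it suffices to show they strictly beat $\mu^{FB}$. When $\theta^2(2r-1) > 1$ this is immediate from the previous paragraph. For the intermediate subregime $J_u < J \leq \arctanh((2r-1)^{-1/2})$, I would plug the explicit single-site and pair marginals of $\mu^{FB}$ and $\mu^\pm$ into the Markov chain formula $\f(\mu) = (1-2r)\shent(p_1) + r\shent(p_2)$ and compare $\f(\mu) - u(\mu)$ across the three candidates; this reduces to the standard Bethe free energy computation on the $2r$-regular tree, which shows that above the tree uniqueness threshold the symmetry-broken fixed points strictly dominate the symmetric one in pressure.

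Finally, Proposition~\ref{prop:locallimit} then asserts that any subsequential local limit $\mathfrak{m}$ of the finitary Gibbs states is supported on the convex hull of $\calE = \{\mu^+, \mu^-\}$, so the LOA limit (its barycenter) has the form $\alpha \mu^+ + (1-\alpha)\mu^-$. The Ising energy function and the uniform permutation model, and therefore the whole set $\calE$ and its induced $\Sigma$-pressure, are invariant under the involution $+1 \leftrightarrow -1$ which swaps $\mu^+$ and $\mu^-$; this forces $\alpha = 1/2$ and in fact pins down the full local limit as $\tfrac{1}{2}(\delta_{\mu^+} + \delta_{\mu^-})$. The main obstacle is the pressure comparison in the intermediate regime $J_u < J \leq \arctanh((2r-1)^{-1/2})$, where Corollary~\ref{cor:isingtypical} is not strong enough to eliminate $\mu^{FB}$ from $\calE$ and one must instead do the Bethe computation by hand; above the Kesten--Stigum threshold the argument becomes entirely mechanical.
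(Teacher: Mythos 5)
Your overall strategy is the right one and matches the paper's: apply Proposition~\ref{prop:locallimit}, pin down $\calE$, and conclude by $\pm$-symmetry. However, there is a genuine gap in your identification of $\calE$, and the step that is supposed to be ``entirely mechanical'' above the Kesten--Stigum threshold is actually wrong.

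You argue that when $\theta^2(2r-1)>1$, Corollary~\ref{cor:isingtypical} gives $h^\sharp(\mu^{FB})=-\infty$, ``so $\mu^{FB}$ has $\Sigma$-pressure $-\infty$ over a typical $\Sigma$ and therefore cannot lie in $\calE$.'' But $\calE$ is the set of \emph{annealed}-equilibrium Gibbs states, i.e.\ maximizers of $\f(\cdot)-u(\cdot)$, not of the $\Sigma$-pressure. And $\f(\mu^{FB})$ is computed explicitly in Section~\ref{sec:KSintro}; it equals $\log 2 + r(\shent(\tfrac{1-\theta}{2})-\log 2)$, which is finite for \emph{all} $\theta$, even where $h^\sharp(\mu^{FB})=-\infty$. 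So the implication ``$h^\sharp=-\infty\Rightarrow$ not $\f$-equilibrium'' fails; you have conflated $\Sigma$-pressure with $\f$-pressure. (What $h^\sharp(\mu^{FB})=-\infty$ does give you, via contraposition of Cor.~\ref{cor:2ndmomentf}, is that $\mu^{FB}$ does \emph{not} satisfy the second-moment criterion in that regime --- which, if $\mu^{FB}$ were in $\calE$, would break the hypotheses of Prop.~\ref{prop:locallimit} rather than let you apply it.) In the intermediate regime $J_u < J \le \arctanh((2r-1)^{-1/2})$ you note the gap yourself and defer to an unspecified Bethe free-energy comparison; if carried out, that comparison would in fact cover all $J>J_u$ at once, so there is no reason to split into two regimes. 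The paper avoids all of this by directly citing \cite[Proposition~1.1]{shriver2023a}, which asserts $\mu^{FB}$ is not $\f$-equilibrium whenever $J>J_u$; that single citation replaces both halves of your argument and is what actually establishes $\calE=\{\mu^+,\mu^-\}$. The remainder of your proof (extremality of $\mu^\pm$ via tail-triviality, the second-moment criterion via \cite[Corollary~16]{shriver2021}, and the $+1\leftrightarrow-1$ symmetry forcing the mixture weights to be $\tfrac12$) is sound and matches the paper.
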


Note that this result is weaker than \cite{montanari2012}, which establishes the local limit for an \emph{arbitrary} sofic approximation. We include it to show how little model-specific work is necessary to establish the typical local limit in this case.

\begin{proof}
	We only need to consider temperatures below the uniqueness threshold -- if there is a unique Gibbs state then it must be the local limit over any sofic approximation.
	
	We first find the elements of $\calE$. Any $\f$-equilibrium state is a Markov chain, and the only Gibbs Markov chains are $\mu^+, \mu^-, \mu^{FB}$. But $\mu^{FB}$ is not $\f$-equilibrium \cite[Proposition~1.1]{shriver2023a}. So $\mu^+, \mu^-$ are the unique $\f$-equilibrium states.
	
	Since $\mu^+$ and $\mu^-$ are extreme \cite[Theorem 12.31(b)]{georgii2011}, any subsequential local limit $\mathfrak{m}$ of the finitary Gibbs states must be supported on convex combinations of $\mu^+, \mu^-$. But every neighborhood marginal of any of the finitary states must be symmetric in $+1/-1$, so $\mathfrak{m}$ must be supported on Gibbs states which have this symmetry. The only such convex combination of $\mu^+$ and $\mu^-$ is $(\mu^+ + \mu^-)/2$. Since this is the only possible subsequential limit, it must be the true limit.
\end{proof}

Note that for the analogous upgrade from LOA to local convergence in \cite{montanari2012} they do not use the geometric property of extremality. Instead they show that the plus and minus states uniquely minimize the energy among Gibbs states, and use this fact in the same way.

\subsection{Antiferromagnetic Ising model}
At every temperature, there is a unique completely homogeneous Markov chain $\mu^\ast$ which is a Gibbs state \cite[p.~253]{georgii2011}. So $\mu^*$ is always the unique $\f$-equilibrium state.

As in the ferromagnetic case, the reconstruction threshold is given by the Kesten--Stigum bound \cite[bottom of page 3]{mezard2006}. Combined with Theorem~\ref{thm:KSbound}, this implies
\begin{prop}
\label{prop:afisinglimit}
	In the non-reconstruction regime, $\mu^\ast$ is the typical local limit of the antiferromagnetic Ising model. If instead $\mu^\ast$ is reconstructible, then it is not weakly typical.
\end{prop}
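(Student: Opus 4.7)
The plan is to handle the two regimes separately, applying Proposition~\ref{prop:locallimit} in the non-reconstruction regime and Theorem~\ref{thm:KSbound} in the reconstruction regime.

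\emph{Non-reconstruction regime.} I would first identify the set $\calE$ of states which are simultaneously $\f$-equilibrium and Gibbs. By \cite[Lemma~6.9]{shriver2023a} every $\f$-equilibrium state is a Markov chain, and by the cited fact from \cite{georgii2011} the only completely homogeneous Gibbs Markov chain is $\mu^\ast$; since the antiferromagnetic Ising interaction has no hard constraints, \cite[Theorem~A]{shriver2022a} additionally guarantees that every $\f$-equilibrium state is Gibbs. Combined with existence of an $\f$-equilibrium state (from upper semicontinuity of $\f - u$ on the compact space $\Prob^\Gamma(\A^\Gamma)$, or indeed from the fact that $\f$-equilibrium states exist whenever $\f$ is finite somewhere), this forces $\calE = \{\mu^\ast\}$. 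In the non-reconstruction regime $\mu^\ast$ is tail-trivial, hence extreme in the Gibbs simplex, and \cite[Corollary~16]{shriver2021} gives the second-moment criterion. Both hypotheses of Proposition~\ref{prop:locallimit} therefore hold, so every subsequential local limit $\mathfrak{m}$ is supported on the convex hull of $\calE = \{\mu^\ast\}$, and hence $\mathfrak{m} = \delta_{\mu^\ast}$. This identifies $\mu^\ast$ as the typical local limit.

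\emph{Reconstruction regime.} Here the plan is to combine the cited fact from \cite{mezard2006} that the reconstruction threshold for the antiferromagnetic Ising Markov chain coincides with the Kesten--Stigum bound with Theorem~\ref{thm:KSbound}. Reconstructibility of $\mu^\ast$ then means $\abs{\theta}^2(2r-1) > 1$, where $\theta$ is the second-largest eigenvalue of the transition matrix of $\mu^\ast$, and Theorem~\ref{thm:KSbound} immediately yields that $\mu^\ast$ is not weakly typical.

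The only delicate point is verifying $\calE = \{\mu^\ast\}$ rather than just $\calE \subseteq \{\mu^\ast\}$; both the upper inclusion (from the Markov-chain structure of $\f$-equilibrium states and uniqueness of the completely homogeneous Gibbs Markov chain) and the membership $\mu^\ast \in \calE$ (from existence of an $\f$-equilibrium state together with the no-hard-constraints hypothesis feeding into \cite[Theorem~A]{shriver2022a}) reduce to results already available. No substantial new computation is required beyond assembling these citations in the right order.
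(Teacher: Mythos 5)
Your proposal is correct and follows essentially the same route as the paper's (very terse) argument: identify $\mu^\ast$ as the unique $\f$-equilibrium Gibbs state, apply Proposition~\ref{prop:locallimit} (with $\calE = \{\mu^\ast\}$, extremality from tail-triviality, and the second-moment criterion from non-reconstruction) to get the local limit in the non-reconstruction regime, and combine the coincidence of the reconstruction and Kesten--Stigum thresholds for this model with Theorem~\ref{thm:KSbound} in the reconstruction regime. The only difference is that you make explicit several steps the paper leaves implicit (the citations establishing that $\f$-equilibrium states are Markov and Gibbs, and the existence argument for an $\f$-equilibrium state), which is a reasonable filling-in rather than a new approach.
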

Since $\mu^\ast$ is ergodic, it would have to be typical to be the typical local limit: by \cite[Corollary 5.7]{austin2016}, if a sequence of measures converges locally to $\mu^*$, then the measures are mostly supported on good models for $\mu^*$. In particular, the sofic approximation must have good models for $\mu^*$.

\subsection{Potts model}
\label{sec:pottslimit}

Let $\A = [q] = \{1, 2, \ldots, q\}$, and fix an interaction strength $J > 0$. This defines a nearest-neighbor interaction with $h \equiv 0$ (no external field) and pair interaction
	\[ J(\ta, \tb) = \left\{ \begin{array}{ll}
					-J, & \ta = \tb \\
					J, & \ta \ne \tb.
					\end{array} \right. \]

Let $Q$ be the $q \times q$ matrix with $e^J$ in the diagonal entries and $e^{-J}$ everywhere else. This is called the ``transfer matrix'' for the interaction.

Given a ``boundary law'' $\ell = (\ell_1, \ldots, \ell_q) \in (0,+\infty)^q$, let $B$ be the $q \times q$ matrix
	\[ B = \left(\begin{array}{ccc} \ell_1 &  & 0 \\  & \ddots &  \\0 &  & \ell_q \end{array}\right) Q \left(\begin{array}{ccc} \ell_1 &  & 0 \\  & \ddots &  \\0 &  & \ell_q \end{array}\right) . \]
After normalizing so that the entries sum to 1, we can think of this as a probability distribution on $[q] \times [q]$. Let $\mu_\ell$ be the $[q]$-valued $\FF_r$-indexed Markov chain with edge marginals proportional to $B$.

By \cite[Corollary 12.17]{georgii2011}, every Markov chain which is a Gibbs measure for the Potts interaction can be obtained by some choice of $\ell$. Multiplying $\ell$ by a positive scalar does not change $\mu_\ell$; if we normalize to fix $\ell_q=1$, then the Markov chain $\mu_\ell$ is Gibbs if and only if $\ell$ is a solution of 
	\[ \ell_i = \left( \frac{[\ell Q]_i}{[\ell Q]_q} \right)^{2r-1}  \quad \forall\ i\in [q] \tag{\cite[Equation 12.16]{georgii2011}} . \]
	
For every $J$, one solution is $\ell \equiv 1$: the corresponding Markov chain $\mu_*$ is called the disordered (or paramagnetic) state. A fairly complete description of the other solutions can be found for example in \cite{kulske2014, galanis2016a}, but here we only introduce the most relevant solutions.

If we fix $i \in [q]$ and look for a solution with $\ell_i = \ell^*$ and $\ell_j = 1$ for $j \ne i$, we get up to two solutions depending on $J$ (aside from $\ell^* = 1$). We call the solution with greater $\ell^*$ the $i$th \emph{ordered state} $\mu_i$ (also called ferromagnetic). All $q$ ordered states are equivalent up to permutations of the symbols $\{1, \ldots, q\}$. In particular, they all have the same entropy and pressure.

Let $J_p = \frac{1}{2} \log \left(\frac{q-2}{(q-1)^{1-\frac{1}{r}}-1}\right)$; this is sometimes called the ordered-disordered threshold.

\begin{prop}
\label{prop:pottslimit}
	 In the uniform permutation model, if $J < J_p$ then the Potts typical local limit is the disordered state $\mu_*$. If $J > J_p$ then the typical local limit is the symmetric mixture of the ordered states.
	 
	 If $J = J_p$ then, over a typical sofic approximation, every subsequential local limit is supported on $\mu_*$ and the symmetric mixture of the ordered states.
\end{prop}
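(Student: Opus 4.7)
The strategy is to apply Proposition~\ref{prop:locallimit}: identify $\calE$ (the set of Gibbs Markov chains which are also $\f$-equilibrium), verify each element satisfies the second-moment criterion, and verify each is extreme; then use the $\Sym([q])$-symmetry of the Potts interaction to isolate which convex combinations can actually appear.

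First I would identify $\calE$. By \cite[Lemma~6.9]{shriver2023a} every $\f$-equilibrium state is a completely homogeneous Markov chain, and by \cite[Corollary~12.17]{georgii2011} the Gibbs Markov chains are exactly those coming from boundary-law solutions of the fixed-point equation stated in Section~\ref{sec:pottslimit}. The $\f$-invariant of such a chain admits a closed form (Section~\ref{sec:finvariant}) in terms of its single-site and edge marginals, both of which are explicit in $\ell$. Feeding this into the enumeration of boundary-law solutions in \cite{galanis2016a, kulske2014} together with their comparison of $\f$-pressures yields $\calE = \{\mu_*\}$ for $J<J_p$, $\calE = \{\mu_1,\ldots,\mu_q\}$ for $J>J_p$, and $\calE = \{\mu_*,\mu_1,\ldots,\mu_q\}$ at the crossover $J = J_p$, with every other boundary-law Gibbs Markov chain strictly $\f$-dominated.

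Next I would verify the second-moment criterion. The second-moment computations of \cite{galanis2016a} show that in the regime where $\mu_*$ (respectively $\mu_i$) is $\f$-dominant, the expected squared number of good models is within a subexponential factor of the square of the expected number; since the exponential growth rate of the expected squared number of good models for $\mu$ equals the supremum of $\f$ over self-joinings of $\mu$, this is exactly the statement that $\mu\times\mu$ is $\f$-maximal among self-joinings. Cor.~\ref{cor:2ndmomentf} then yields $h^\flat(\mu)=h^\sharp(\mu)=\f(\mu)$ for each $\mu\in\calE$. Extremality in the relevant regime is part of the same analysis: each $\mu_i$ is extreme as a pure ordered phase by \cite[Theorem~12.31]{georgii2011}, and $\mu_*$ is tail-trivial (hence extreme) in the non-reconstruction range studied in \cite{galanis2016a, mezard2006}.

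Proposition~\ref{prop:locallimit} now gives that every subsequential local limit $\mathfrak{m}$ of the finitary Potts Gibbs states over a typical sofic approximation is supported on $\conv(\calE)$. Finally I would exploit the $\Sym([q])$-symmetry of the Potts energy: each $\zeta_n$ is $\Sym([q])$-invariant, hence so is each local marginal $(\Pi_v^{\sigma_n})_*\zeta_n$, so the support of $\mathfrak{m}$ lies in the $\Sym([q])$-fixed subset of $\conv(\calE)$. That fixed subset is $\{\mu_*\}$ for $J<J_p$, the singleton $\{\tfrac{1}{q}\sum_i\mu_i\}$ for $J>J_p$, and the segment $\{\alpha\mu_*+(1-\alpha)\tfrac{1}{q}\sum_i\mu_i:\alpha\in[0,1]\}$ for $J=J_p$, matching the three cases of the proposition. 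The main obstacle is the second-moment step for the ordered phase at $J\ge J_p$, where one must rule out asymmetric self-joinings of $\mu_i$ exceeding the product in $\f$-value; the disordered case reduces to the classical non-reconstruction second-moment calculation.
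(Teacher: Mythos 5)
Your overall strategy matches the paper's: determine $\calE$, verify the hypotheses of Proposition~\ref{prop:locallimit}, and then cut down $\conv(\calE)$ using the $\Sym([q])$-symmetry of the interaction, handling $J<J_p$, $J>J_p$, and $J=J_p$ separately. The symmetry argument and the final case split are exactly as in the paper.

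However, your treatment of the second-moment criterion has a real gap, which you yourself flag at the end as ``the main obstacle.'' You propose to read the second-moment criterion off directly from the second-moment computations in \cite{galanis2016a}, asserting that the exponential growth of $\EE\abs{\Omega(\sigma_n,\mu,\varepsilon)}^2$ equals $\sup_\lambda\f(\lambda)$ over self-joinings and that \cite{galanis2016a} shows this is $2\f(\mu)$. But \cite{galanis2016a} bounds the second moment of partition functions, not of good-model counts over all weak-$*$ neighborhoods of $\mu$, and translating their estimates into a bound on $\f(\lambda)$ for \emph{every} self-joining $\lambda$ is precisely the nontrivial step you'd need to supply. The paper sidesteps this entirely: it proves (or cites) tail-triviality of each element of $\calE$ --- the ordered states via \cite[Lemma 42]{galanis2016a}, the disordered state via an explicit column-contraction estimate for all $J<\tfrac12\log(1+\tfrac{q}{2r-2})$ --- and then invokes \cite[Corollary 16]{shriver2021}, which says non-reconstruction (equivalently tail-triviality) implies the second-moment criterion for nearest-neighbor interactions without hard constraints. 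Tail-triviality thus does double duty, giving both extremality and the second-moment criterion in one stroke; you treat these as separate steps and the second one is left dangling.

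Two smaller points. First, you cite \cite[Theorem 12.31]{georgii2011} for extremality of the Potts ordered states, but that theorem concerns the Ising model; the correct source for Potts is \cite[Lemma 42]{galanis2016a}. Second, you gesture at ``their comparison of $\f$-pressures'' in \cite{galanis2016a}, but that paper compares the functional $\Phi(\ell)$, not $\press_\f$. The identification $\press_\f(\mu_\ell)=\Phi(\ell)$ for Gibbs Markov chains is Lemma~\ref{lem:phireduction}, which the present paper proves precisely to make the \cite{galanis2016a} maximization usable; your proposal implicitly needs this lemma but does not name or establish it.
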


Comparing the values of $\press_{\f}(\mu_\ell)$ for different solutions $\ell$ is difficult to do rigorously.
In this section we will use the results of involved computations in \cite{galanis2016a}. Note that the published version \cite{galanis2016} is abridged and does not contain many of the results we need here.

Consider the function $\Phi(\ell)$ defined by
	\[ \Phi(\ell) = r \log \frac{\ell Q \ell^T}{\norm{\ell}_p^2} \]
where $p = \frac{2r}{2r-1}$. The following lemma allows us to use $\Phi$ to compare values of $\press_{\f}$:
\begin{lemma}
\label{lem:phireduction}
	If $\mu_\ell$ is a Gibbs state then $\press_{\f}(\mu_\ell) = \Phi(\ell)$.
\end{lemma}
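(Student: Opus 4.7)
The plan is to evaluate $\press_\f(\mu_\ell) = \f(\mu_\ell) - u(\mu_\ell)$ directly using the Markov chain formula $\f(\mu) = (1-2r)\shent(p_1) + r\shent(p_2)$ from Section~\ref{sec:finvariant}, and observe that several terms cancel, leaving exactly $\Phi(\ell)$. For $\mu_\ell$ the edge marginal is $p_2(i,j) = \ell_i Q_{ij} \ell_j / Z$ with $Z = \ell Q \ell^T$, and summing over $j$ gives $p_1(i) = \ell_i [\ell Q]_i / Z$.

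The first key step is to feed in the Gibbs fixed-point equation $\ell_i = ([\ell Q]_i/[\ell Q]_q)^{2r-1}$. This rearranges to $[\ell Q]_i = c\,\ell_i^{1/(2r-1)}$ for a constant $c$ that does not depend on $i$, giving $Z = c \sum_i \ell_i^p$ and the clean expression $p_1(i) = \ell_i^p / \norm{\ell}_p^p$ for the exponent $p = 1 + 1/(2r-1) = 2r/(2r-1)$ used to define $\Phi$.

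The second step is bookkeeping. Expanding the entropies, using the symmetry $p_2(i,j) = p_2(j,i)$, gives
\[ \shent(p_2) = -2\sum_i p_1(i)\log \ell_i - \sum_{i,j} p_2(i,j)\log Q_{ij} + \log Z \]
and
\[ \shent(p_1) = -p\sum_i p_1(i)\log \ell_i + p\log\norm{\ell}_p . \]
Since $Q_{ij} = e^{-J(i,j)}$ and each of the $2r$ neighbors of $e$ contributes half its edge energy, the average energy reduces to $u(\mu_\ell) = -r\sum_{i,j} p_2(i,j)\log Q_{ij}$. Two cancellations then occur: the $\sum_i p_1(i)\log \ell_i$ terms have coefficients $(2r-1)p = 2r$ coming from $(1-2r)\shent(p_1)$ and $-2r$ coming from $r\shent(p_2)$, which cancel thanks to the chosen value of $p$; and $u(\mu_\ell)$ cancels $-r\sum_{i,j} p_2(i,j)\log Q_{ij}$ inside $r\shent(p_2)$. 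What survives is $r\log Z - 2r\log\norm{\ell}_p = r\log(\ell Q \ell^T / \norm{\ell}_p^2) = \Phi(\ell)$.

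The calculation is routine once the boundary-law equation is substituted. The only real obstacle is careful bookkeeping: one must track coefficients without confusing $p$ with $2$. Conceptually, the heart of the computation is the identity $(2r-1)p = 2r$, which is precisely why the exponent $p = 2r/(2r-1)$ appears in the definition of $\Phi$ in the first place.
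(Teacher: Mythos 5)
Your proof is correct and follows essentially the same route as the paper: plug the Markov-chain formula for $\f$ into $\press_\f = \f - u$, write the single-site and edge marginals of $\mu_\ell$ in terms of $\ell$, and watch the $\sum_i p_1(i)\log\ell_i$ terms cancel because $(2r-1)p = 2r$. The only variation is that you derive the single-site marginal $\ell_i^p/\norm{\ell}_p^p$ from the boundary-law fixed-point identity rather than quoting it from Georgii's Equation~12.13 as the paper does, which is a pleasant self-contained touch but not a different argument.
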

See Section \ref{sec:phireductionproof} for a proof.

Maximizing $\Phi$ is still fairly involved, but the work has already been done in \cite{galanis2016a}:
\begin{theorem}[\cite{galanis2016a}]
	For $J < J_p$ the boundary law $\ell \equiv 1$ corresponding to the disordered state is the unique maximizer of $\Phi$. For $J > J_p$ the boundary laws corresponding to any of the ordered states are the only maximizers of $\Phi$.
\end{theorem}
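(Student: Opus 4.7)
The plan is to solve the constrained maximization of $\Phi(\ell)$ over $\ell\in(0,\infty)^q$. Since $\Phi(c\ell)=\Phi(\ell)$ for $c>0$, I can restrict to $\|\ell\|_p=1$ with $p=2r/(2r-1)$. Computing the gradient of $\log(\ell Q\ell^T) - \log\|\ell\|_p^2$ and setting it to zero (using the symmetry of $Q$) yields the critical-point equation
\[ (Q\ell)_i = \lambda\, \ell_i^{\,p-1} \qquad \text{for all } i\in[q], \]
for some Lagrange multiplier $\lambda>0$. Since $p-1=1/(2r-1)$, raising to the $(2r-1)$-st power and renormalizing $\ell_q=1$ reproduces exactly the Gibbs boundary-law equation \cite[Eq.~12.16]{georgii2011}. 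Hence critical points of $\Phi$ on the sphere correspond bijectively to Gibbs boundary laws, and the theorem reduces to identifying which of them maximize $\Phi$.

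Next I would exploit the $S_q$ symmetry of $Q$ and $\|\cdot\|_p$, together with the decomposition $Q = e^{-J}(\mathbf{1}\mathbf{1}^T + (e^{2J}-1)I)$, to argue that every maximizer has at most two distinct coordinate values. The heuristic is that within any block of coordinates taking distinct values, replacing them by a suitably-defined mean changes $\ell Q\ell^T$ more favorably than $\|\ell\|_p^2$, via a power-mean inequality adapted to the rank-one-plus-scalar structure of $Q$. This reduction (whose detailed form is worked out in \cite[Section~4]{galanis2016a}) restricts attention to the one-parameter family $\ell=(\ell^*,\ldots,\ell^*,1,\ldots,1)$ with $k$ copies of $\ell^*$ for some $k\in\{1,\ldots,q-1\}$, along with the uniform law $\ell\equiv 1$.

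Let $f_k(\ell^*) = \Phi(\ell^*,\ldots,\ell^*,1,\ldots,1)$. The remaining tasks are: (i) show that $\max_{k\in\{1,\ldots,q-1\}} \sup_{\ell^*>0} f_k(\ell^*)$ is attained at $k=1$ (equivalently $k=q-1$), corresponding to the ordered states $\mu_1,\ldots,\mu_q$; and (ii) compare this maximum with $\Phi(1,\ldots,1)$ as a function of $J$. I expect step (i) to be the main obstacle: it requires comparing the one-variable optima of $f_k$ across $k$ through delicate inequalities that balance cluster size against $\ell^*$ in both the numerator and the $p$-norm denominator of $\Phi$, with the exponent $p=2r/(2r-1)$ forced by the $\f$-invariant formula of Lemma~\ref{lem:phireduction} playing a decisive role.

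Finally, with the problem reduced to the ordered family $k=1$, I would use the scalar boundary-law equation to eliminate $\ell^*$ and express $\sup_{\ell^*} f_1(\ell^*)$ in closed form as a function of $J$, then separately compute $\Phi(1,\ldots,1)$. Solving $\sup_{\ell^*} f_1 = \Phi(1,\ldots,1)$ identifies the crossover as $J_p = \tfrac{1}{2}\log\bigl((q-2)/((q-1)^{1-1/r}-1)\bigr)$, and monotonicity of the difference in $J$ (strictly negative for $J<J_p$, strictly positive for $J>J_p$), combined with strict concavity of $f_1$ at its maximizing $\ell^*$, yields both the dichotomy and the uniqueness (up to $S_q$-symmetry) of the maximizers on each side of the threshold.
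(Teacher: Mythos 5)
This theorem is imported from \cite{galanis2016a}; the paper does not give a proof of it, so there is no in-paper argument to compare against. Your sketch correctly works out the opening move: the stationarity condition for $\Phi$ on the unit $p$-sphere, $(Q\ell)_i = \lambda \ell_i^{p-1}$, does indeed reduce (since $p-1 = 1/(2r-1)$) to the Gibbs boundary-law fixed-point equation \cite[Eq.~12.16]{georgii2011} after normalization, so critical points of $\Phi$ are precisely boundary laws of Gibbs Markov chains. That observation is sound and is the right way to connect the variational problem to the Gibbs structure.

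Beyond that, though, the proposal is a plan rather than a proof. The reduction to at-most-two distinct coordinate values (your ``power-mean inequality adapted to the rank-one-plus-scalar structure'') is asserted and then cited back to \cite[Section~4]{galanis2016a}, which is circular in a proof of a theorem attributed to that same paper. Step (i) --- that the one-block family $k=1$ dominates all $k\in\{1,\dots,q-1\}$ --- is the analytic heart of the result, and you explicitly flag it as the main obstacle without supplying the comparison inequalities; this is precisely where \cite{galanis2016a} does most of its work. Step (ii), the closed-form identification of the crossover at $J_p$ and the strict monotonicity in $J$ of the pressure gap, is likewise outlined but not carried out, and you would also need to rule out the lesser (``unstable'') solution of the scalar boundary-law equation as a maximizer and handle possible non-two-block critical points that survive the reduction. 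As written, the proposal identifies the correct scaffolding but does not establish the theorem; the genuine gap is that the substantive inequalities in steps (i) and (ii), and the two-block reduction itself, remain unproven.
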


\begin{cor}
	Among Gibbs Markov chains, for $J < J_p$ the disordered state uniquely maximizes $\press_{\f}$. For $J > J_p$ the ordered states are the only maximizers of $\press_{\f}$.
\end{cor}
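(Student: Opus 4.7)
The plan is to assemble the corollary from three ingredients already on the table, with essentially no new computation. First, Corollary 12.17 of Georgii (cited in the discussion preceding the theorem) says that every Gibbs Markov chain for the Potts interaction is of the form $\mu_\ell$ for some boundary law $\ell \in (0,+\infty)^q$, and moreover $\mu_\ell$ depends on $\ell$ only through its projective class, so we may normalize $\ell_q=1$ and restrict to those $\ell$ solving the fixed-point equation
\[ \ell_i = \left( \frac{[\ell Q]_i}{[\ell Q]_q} \right)^{2r-1}, \quad i\in[q]. \]
Second, by Lemma \ref{lem:phireduction}, for every such $\ell$ we have $\press_{\f}(\mu_\ell) = \Phi(\ell)$. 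Third, the quoted theorem of \cite{galanis2016a} identifies the global maximizers of $\Phi$ on the full domain $(0,+\infty)^q$: the constant vector $\ell\equiv 1$ uniquely if $J<J_p$, and the $q$ ordered-state boundary laws uniquely if $J>J_p$.

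The proof then amounts to checking that these three statements compose correctly. I would first observe that $\Phi$ is invariant under positive rescaling of $\ell$, since the numerator $\ell Q \ell^T$ and the denominator $\|\ell\|_p^2$ are both homogeneous of degree $2$ in $\ell$; this ensures the Galanis maximization statement descends to projective classes, hence to distinct Gibbs Markov chains. Next, I would note that each of the maximizers identified by Galanis et al.\ does satisfy the boundary-law equation above: $\ell\equiv 1$ is the disordered solution that exists at every $J$, and the ordered-state vectors $\ell^{(i)}$ are, by the definition given in the text, themselves solutions of the fixed-point system obtained by the ansatz $\ell_i = \ell^*$, $\ell_j = 1$ for $j\ne i$. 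Thus the unconstrained maximizers of $\Phi$ lie inside the set of boundary-law solutions, which means the maximum of $\Phi$ over boundary-law solutions equals the unconstrained maximum and is attained at precisely the same points.

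Combining these observations, for $J<J_p$ the only $\ell$ realizing the maximum of $\press_{\f}(\mu_\ell)$ over Gibbs Markov chains is $\ell\equiv 1$, yielding $\mu_*$; for $J>J_p$ the only maximizers are the $q$ ordered-state boundary laws, yielding the ordered states $\mu_1,\dots,\mu_q$ (which, as noted in the text, all have the same $\press_{\f}$).

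The main obstacle, which is really just bookkeeping, is verifying that the domain over which Galanis et al.\ maximize $\Phi$ is at least as large as the projective space of boundary laws, so that a unique maximizer there automatically gives a unique maximizer among Gibbs Markov chains. Beyond this there is no substantive analytic step: all of the combinatorial hardness — the evaluation of critical points of $\Phi$, the determination of $J_p$, and the ruling out of other stationary points — has been quarantined inside the cited theorem.
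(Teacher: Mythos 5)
Your proof is correct and takes essentially the same route as the paper, which disposes of the corollary in one sentence: restrict to boundary laws $\ell$ yielding Gibbs states, where $\press_{\f}(\mu_\ell)=\Phi(\ell)$, and invoke the quoted theorem. Your additional remarks --- that $\Phi$ is scale-invariant, that the Galanis maximizers are themselves boundary-law solutions (so the constrained maximum equals the unconstrained one), and that every Gibbs Markov chain arises from some $\ell$ --- are exactly the bookkeeping the paper leaves implicit, and they are all correct.
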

\begin{proof}
	We only need to compare values of $\press_{\f}(\mu_\ell)$ for $\ell$ such that $\mu_\ell$ is Gibbs, in which case $\press_{\f}(\mu_\ell) = \Phi(\ell)$.
\end{proof}

The last ingredient is that the maximizers of $\press_{\f}$ are tail-trivial.

\begin{prop}
	The Potts ordered states are tail-trivial.
\end{prop}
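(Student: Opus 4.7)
The plan is to prove the stronger statement that each ordered state $\mu_i$ is an extreme point of the convex set of Potts Gibbs measures; Theorem 7.7 of \cite{georgii2011} (cited earlier in this section) then yields tail-triviality for free.

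First, I would identify $\mu_i$ as the weak limit of the finite-volume Potts Gibbs states on balls of radius $n$ in $\Gamma$ with all boundary spins fixed to color $i$. From the boundary-law description of \cite[Corollary~12.17]{georgii2011}, iterating the tree recursion $T(\ell)_j = \bigl([\ell Q]_j / [\ell Q]_q\bigr)^{2r-1}$ from the ``$i$-magnetized'' initial condition produces the finite-volume conditional distributions on inner vertices, whose weak limit is the Markov chain with boundary law $\ell^*$, namely $\mu_i$.

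Second, I would extract extremality from a monotonicity property. The cleanest route uses the Edwards--Sokal coupling with the ferromagnetic random cluster model: sample the wired random cluster Gibbs measure on the $2r$-regular tree, color the almost-surely-unique infinite cluster by $i$, and color each finite cluster by an independent uniform draw from $\A$. The wired random cluster state is extreme by the standard FKG coupling argument (it is maximal in stochastic order among random cluster Gibbs measures on the tree), and this extremality transfers to $\mu_i$ because the coloring step is a factor map with respect to an independent source of randomness.

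The main obstacle is executing the random cluster argument precisely in the tree setting with general $q > 2$, in particular verifying that the wired infinite cluster is almost surely unique and carries positive density throughout the regime $J > J_p$. A more self-contained alternative bypasses random clusters entirely: linearize $T$ at the fixed point $\ell^*$ and invoke the stability analysis underlying the maximizer classification of \cite{galanis2016a} to show the derivative has spectral radius strictly less than $1$ in the ordered phase. Standard Markov-chain-on-tree arguments then convert this contraction into $L^2$-convergence $\EE_{\mu_i}[f \mid \text{spins on the sphere of radius } n] \to \int f\, d\mu_i$ for cylinder functions $f$, which directly yields tail-triviality.
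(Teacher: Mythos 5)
The paper does not give an argument at all: it simply cites \cite[Lemma~42]{galanis2016a}, whose method (as reverse-engineered from the paper's own proof of the companion proposition about the disordered state) is a Dobrushin-style \emph{total-variation contraction}: one verifies that the transition matrix of the ordered Markov chain $\mu_i$ has maximum column-to-column total variation distance strictly less than $1/(2r-1)$, from which a one-step coupling argument yields non-reconstruction (hence tail-triviality). Neither of your two routes matches this, and both have genuine gaps.

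Your Edwards--Sokal route founders on the geometry of the tree. On a $2r$-regular tree the Burton--Keane uniqueness argument does not apply, and in fact the supercritical random-cluster (and Bernoulli) measures have \emph{infinitely many} infinite clusters almost surely, not one. So ``color the almost-surely-unique infinite cluster by $i$'' is not available; in the infinite-volume wired limit you would have to color each infinite cluster independently, which does not reproduce $\mu_i$, and the extremality-transfer argument collapses. You do flag the uniqueness issue as an ``obstacle,'' but it is not a technical nuisance to be patched --- it is false, and the route cannot be repaired along those lines.

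Your alternative, linearizing the BP recursion $T$ at $\ell^*$ and inferring tail-triviality from spectral radius $<1$, conflates two different things. Linear stability of the fixed point is a \emph{local} property of the BP dynamics; non-reconstruction (equivalently, tail-triviality via \cite[Theorem~7.7]{georgii2011}) requires convergence of the conditional law at the root given \emph{arbitrary} boundary data on large spheres, which is a global contraction statement. The two are not equivalent in general: for the disordered Potts fixed point with large $q$, linear stability (the Kesten--Stigum condition) holds in a regime where reconstruction is nonetheless possible, so ``standard Markov-chain-on-tree arguments'' do not convert the local spectral bound into the needed $L^2$ convergence. What \cite[Lemma~42]{galanis2016a} supplies is precisely the stronger, global Dobrushin bound that makes the contraction uniform over all boundary conditions, and that is the step your proposal does not furnish. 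Also note that maximizing the free-energy functional $\Phi$ --- the stability analysis you invoke --- concerns which fixed points are equilibrium, not whether the corresponding Gibbs state is extreme; these are logically independent questions.
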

This is proven in \cite[Lemma 42]{galanis2016a}.

The following result is stated in \cite[Proposition 2.6]{coja-oghlan2023} but no proof seems to have been published. It can be proven using the same method as the previous result.

\begin{prop}
	The disordered state is tail-trivial for all $J < \frac{1}{2} \log \left( 1 + \frac{q}{2r-2} \right)$. In particular, it is tail-trivial for all $J < J_p$.
\end{prop}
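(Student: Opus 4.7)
The plan is to adapt the proof of \cite[Lemma~42]{galanis2016a} (which handles the ordered states) to the disordered state $\mu_*$, exploiting its symmetry under permutations of $[q]$. Since $\mu_*$ is a completely homogeneous Markov chain on the Cayley tree of $\Gamma$ whose single-site marginal is uniform and whose transition matrix $M$ has all equal diagonal entries $\alpha = e^J/(e^J+(q-1)e^{-J})$ and all equal off-diagonal entries $\beta = e^{-J}/(e^J+(q-1)e^{-J})$, tail-triviality is equivalent to non-reconstruction for the $S_q$-symmetric broadcast process on the $(2r-1)$-ary tree with channel $M$. So the problem reduces to establishing non-reconstruction under the stated condition on $J$.

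The core step is to introduce a scalar potential $\Phi_n$ measuring deviation from the uniform posterior after broadcasting for $n$ levels: for instance, the expected $\chi^2$-divergence between the conditional law of the root spin given a random depth-$n$ boundary configuration and the uniform prior $\pi$, where the boundary is sampled from $\mu_*$. Using conditional independence of the $2r-1$ subtrees of the root, one derives a recursion $\Phi_{n+1} \leq F(\Phi_n)$. In the asymmetric (ordered) setting of \cite{galanis2016a}, the analogue is a multivariate recursion that requires a careful fixed-point analysis; here, $S_q$-equivariance collapses the recursion to a single scalar equation, simplifying matters considerably. The linearization condition $F'(0)<1$ will then, after simplification, take the form $(2r-1)\cdot \varphi(\alpha,\beta) < 1$ for an explicit per-edge contraction factor $\varphi$, and this should rearrange to exactly $e^{2J}-1 < q/(2r-2)$, i.e.~$J < \tfrac12\log\bigl(1+\tfrac{q}{2r-2}\bigr)$. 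Once $F$ contracts near $0$, the potential $\Phi_n$ tends to $0$, which gives non-reconstruction and hence tail-triviality.

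The main obstacle is identifying the correct potential and verifying that the contraction threshold matches the stated bound rather than the weaker Kesten--Stigum threshold $\theta^2(2r-1)<1$ that the generator's second eigenvalue would naively suggest; this is a careful computation but is the standard sort of calculation used throughout the tree reconstruction literature. The ``in particular'' clause then requires only the elementary numerical inequality $J_p \leq \tfrac12\log\bigl(1+\tfrac{q}{2r-2}\bigr)$, equivalently $(q-2)(2r-2) \leq \bigl((q-1)^{1-1/r}-1\bigr)(2r-2+q)$, which reduces to an equality at $q=2$ and is easily checked for $q\geq 3$ and $r \geq 2$.
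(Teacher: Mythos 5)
Your proposed route is not the paper's, and as written it has a real gap. The paper does not set up a recursion at all: it simply invokes the criterion from \cite[Lemma~42]{galanis2016a} that a homogeneous Markov chain on $\Gamma$ is tail-trivial whenever the maximum total variation distance between two columns of its $(2r-1)$-branching transition matrix is strictly less than $\frac{1}{2r-1}$. Since the disordered state corresponds to $\ell \equiv 1$, its transition matrix is $\frac{1}{e^J+(q-1)e^{-J}}$ times the transfer matrix, whose columns are permutations of one another, so the TV between any two distinct columns is exactly $\frac{e^{2J}-1}{e^{2J}+q-1}$. Requiring this to be $<\frac{1}{2r-1}$ is a one-line rearrangement giving $J<\frac12\log\bigl(1+\frac{q}{2r-2}\bigr)$, and the ``in particular'' is then the claimed numerical inequality $J_p\le\frac12\log\bigl(1+\frac{q}{2r-2}\bigr)$, same as you identify.

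The gap in your approach is the choice of potential. You propose the expected $\chi^2$-divergence of the root posterior from the uniform prior, but for a symmetric channel the per-edge $\chi^2$-contraction factor is $\theta^2$, so linearizing your recursion at $0$ gives the Kesten--Stigum condition $\theta^2(2r-1)<1$, not the condition $\theta(2r-1)<1$ that your own rearrangement and the paper require. In other words, $F'(0)<1$ for the $\chi^2$-potential does \emph{not} ``rearrange to exactly $e^{2J}-1<q/(2r-2)$'' as you assert; that rearrangement corresponds to a per-edge contraction factor of $\theta$ (linear, not quadratic), which is what the total-variation/Dobrushin argument gives. This matters for two reasons. First, the claim that the linearization matches the target bound is simply false as stated for the $\chi^2$-potential. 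Second, even where the $\chi^2$-linearization is satisfied, you would still need a \emph{global} contraction of $F$, and for $q\ge 5$ the $\chi^2$-recursion does not globally contract up to the Kesten--Stigum point, so the linearization alone would not suffice. The paper's TV/Dobrushin argument sidesteps both issues: under the more conservative linear condition $\theta(2r-1)<1$ one gets a genuine global coupling contraction, which is exactly what the cited \cite[Lemma~42]{galanis2016a} provides. To repair your argument, replace the $\chi^2$-divergence with a total-variation potential (or just track the maximum over boundary configurations of the TV between the root posterior and the prior), at which point you recover the paper's criterion and the rest of your computation goes through. Also note you have the comparison backwards: the Kesten--Stigum condition $\theta^2(2r-1)<1$ is the \emph{less} restrictive one (it allows a larger range of $J$), so it is the stated bound $\theta(2r-1)<1$ that is the weaker sufficient condition, not vice versa.
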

As pointed out in \cite{coja-oghlan2023}, $\frac{1}{2} \log \left( 1 + \frac{q}{2r-2} \right)$ is the threshold up to which the messages $\ell \equiv 1$ corresponding to the disordered state are a local maximum of $\Phi$. For the Ising case ($q=2$) this coincides with the uniqueness threshold, so in general this is a sufficient but not necessary condition for tail-triviality.
\begin{proof}
	Since the disordered state corresponds to $\ell \equiv 1$, its edge marginal is simply the transfer matrix $Q$ except normalized so that the entries sum to 1. The Markov transition matrix can be obtained from this by normalizing the columns:
		\[ \frac{1}{e^J + (q-1)e^{-J}} \left(\begin{array}{ccc}e^{J} &  & e^{-J} \\  & \ddots &  \\ e^{-J} &  & e^{J}\end{array}\right) . \]
	
	As in \cite[Lemma 42]{galanis2016a}, it suffices to show that the maximum total variation distance (half the $1$-norm) between two columns is less than $\frac{1}{2r-1}$. For any two distinct columns, the distance is $\frac{e^{2J}-1}{e^{2J}+q-1}$. Solving
		\[ \frac{e^{2J}-1}{e^{2J}+q-1} < \frac{1}{2r-1} \]
	for $J$ yields the condition
		\[ J < \frac{1}{2} \log \left( 1 + \frac{q}{2r-2} \right). \]
	Since $J_p \leq \frac{1}{2} \log \left( 1 + \frac{q}{2r-2} \right)$ for $q \geq 2$, the result follows.
\end{proof}

We can now prove the claimed typical local limits of the Potts model:

\begin{proof}[Proof of Proposition~\ref{prop:pottslimit}]
	The above shows that $\calE$ is $\{\mu_*\}$ if $J<J_p$, and the set of the $q$ ordered states if $J>J_p$.
	
	Let $\mathfrak{m}$ be a subsequential local limit along some typical sofic approximation. Since every element of $\calE$ is tail-trivial (extreme), $\mathfrak{m}$ is supported on the convex hull of $\calE$ (Proposition~\ref{prop:locallimit}). In the disordered case $J <J_p$, this finishes the proof since $\calE$ has only one element.
	
	For the ordered case $J> J_p$, as with the Ising example above, $\mathfrak{m}$ must be supported on states which are invariant under permuting the labels in $[q]$. The symmetric mixture of the ordered states is the only convex combination of them with this property, so it is the only possible subsequential local limit.
	
	For the case $J = J_p$, the set $\calE$ contains the disordered state and all $q$ ordered states. There are two states in the convex hull of $\calE$ which are invariant under permutations of $[q]$: the disordered state and the symmetric combination of the ordered states. So every subsequential local limit $\mathfrak{m}$ is supported on these two states, but we are not able to say how much weight is given to each.
\end{proof}

\subsection{Proof of Lemma~\ref{lem:phireduction}}
\label{sec:phireductionproof}

As in the definition of $\Phi$, let $p = \frac{2r}{2r-1}$.

By \cite[Equation 12.13]{georgii2011}, if $\mu_\ell$ is Gibbs then the single-site marginal of $\mu_\ell$ is $\alpha = \frac{1}{Z_\alpha} (\ell_1^p, \ldots,\ell_q^p)$, and the edge marginal is the measure $\beta \in \Prob(\A \times \A)$ given by $\beta_{ij} = \frac{1}{Z_\beta} q_{ij} \ell_i \ell_j$, where $q_{ij}$ is the $(i,j)$ entry of the transfer matrix $Q$. Note that the assumption that $\mu_{\ell}$ is Gibbs ensures that both marginals of $\beta$ are $\alpha$.

The average energy per site $u(\mu_\ell)$ is given by $r \sum_{i,j = 1}^q \beta_{ij} (-\log q_{ij})$.

Since $\mu_\ell$ is a Markov chain, we have
	\[ \f(\mu_\ell) = (1-2r) \shent(\alpha) + r \shent(\beta) . \]
	
Putting this all together gives
\begin{align*}
	\press_{\f}(\mu_\ell)
		&= (1-2r) \sum_{i=1}^q -\alpha_i \log \alpha_i + r \sum_{i,j=1}^q -\beta_{ij} \log \beta_{ij} + r \sum_{i,j = 1}^q \beta_{ij} \log q_{ij} \\
		&= (1-2r) \sum_{i=1}^q -\alpha_i (p \log \ell_i - \log Z_\alpha) - r \sum_{i,j=1}^q \beta_{ij} (\log \beta_{ij}- \log q_{ij}) .
\end{align*}
By definition of $\beta_{ij}$, and using that both marginals of $\beta$ are $\alpha$, we get
\begin{align*}
	\sum_{i,j=1}^q \beta_{ij} (\log \beta_{ij}- \log q_{ij})
		&= \sum_{i,j=1}^q \beta_{ij} (\log \ell_i + \log \ell_j - \log Z_\beta) \\
		&= \sum_{i=1}^q \log \ell_i \sum_{j=1}^q \beta_{ij} + \sum_{j=1}^q \log \ell_j \sum_{j=1}^q \beta_{ij}- \log Z_\beta \\
		&= 2 \sum_{i=1}^q \alpha_i \log \ell_i- \log Z_\beta.
\end{align*}
So
	\[ \press_{\f}(\mu_\ell) = (1-2r)\log Z_\alpha + r \log Z_\beta - (1-2r) p \sum_{i=1}^q \alpha_i \log \ell_i - 2r \sum_{i=1}^q \alpha_i \log \ell_i .\]
Since $p = \frac{2r}{2r-1}$, the last two terms cancel each other out, leaving
	\[ \press_{\f}(\mu_\ell) = (1-2r)\log Z_\alpha + r \log Z_\beta = r \log \frac{Z_\beta}{Z_\alpha^{p/2}} = \Phi(\ell). \]


\section{Kesten--Stigum bound implies atypicality}

In this section, we only consider the uniform permutation model, since we rely on a result of \cite{bordenave2019} which is proved in that context.

The argument is loosely based on \cite[Theorem 3.1, Corollary 3.2]{lyons2017}, a proof attributed to Allan Sly that the Ising free boundary state is not a factor of iid below the reconstruction threshold. The connection between this approach and \cite{bordenave2019} was suggested by Tim Austin.

A similar correlation bound for factor of iid processes was proven in \cite{backhausz2015} using a very similar kind of spectral argument. An alternative approach to proving Theorem~\ref{thm:KSbound} may be to show that graphings of typical processes are Ramanujan and then apply the main result of \cite{backhausz2015}.

The following theorem was shared with me by Tim Austin, but is likely known to others. See for example \cite{burton2020} for more discussion of connections between sofic entropy and notions of weak containment.

\begin{theorem}
\label{thm:repopbound}
	Suppose $\A$ is a finite alphabet and $\mu \in \Prob^{\FF_r}(\A^{\FF_r})$. If $\mu$ is weakly typical then for any $A \in \CC \FF_r$
		\[ \norm{\pi(A)}_{L^2_0(\mu)} \leq \norm{\lambda(A)}_{\ell^2(\FF_r)} . \]
\end{theorem}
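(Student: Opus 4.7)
The plan is to transfer the spectral bound from the regular-representation side---provided by Bordenave's \cite{bordenave2019} strong-convergence result for the uniform permutation model---to the Koopman side, using good models for $\mu$ as a bridge. Given $\sigma \in \Hom(\Gamma, \Sym(V))$ and a microstate $\mb{x} \in \A^V$, each cylinder function $f \colon \A^\Gamma \to \CC$ has a natural pullback $\tilde{f}(v) = f(\Pi_v^\sigma \mb{x})$ on $V$, satisfying $\norm{\tilde{f}}_{\ell^2(V)}^2 = \abs{V} \int \abs{f}^2 \, dP_{\mb{x}}^\sigma$. The equivariance relation $g \cdot \Pi_v^\sigma \mb{x} = \Pi_{\sigma^g v}^\sigma \mb{x}$ from Section~\ref{sec:definitions} makes this pullback an intertwiner between the Koopman action $\pi$ and the permutation representation $L$ of $\Gamma$ on $\ell^2(V)$ defined by $(L(\gamma)\phi)(v) = \phi(\sigma^{\gamma^{-1}} v)$; thus $\widetilde{\pi(A)f} = L(A)\tilde{f}$ for every $A \in \CC\Gamma$.

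The first input I need is a strong-convergence result of \cite{bordenave2019}: for each fixed $A \in \CC\Gamma$ and $\varepsilon > 0$, the uniform random homomorphism satisfies $\norm{L(A)\restriction \ell^2_0(V_n)} \leq \norm{\lambda(A)}_{\ell^2(\Gamma)} + \varepsilon$ with probability tending to $1$, where $\ell^2_0(V_n) \subset \ell^2(V_n)$ is the orthogonal complement of the constants. The second input is that weak typicality gives $\underline{\rho}_\mu(0) = 0$, so by Lemma~\ref{lem:rhoconcentration} there is a subsequence $\{n_k\}$ along which, with probability tending to $1$, $\sigma_{n_k}$ admits good models $\mb{x}_{n_k}$ with $\bar d(P_{\mb{x}_{n_k}}^{\sigma_{n_k}}, \mu) \to 0$. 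A union bound lets me select $\sigma_{n_k}$ on which both events hold, together with corresponding $\mb{x}_{n_k}$.

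Since mean-zero cylinder functions are dense in $L^2_0(\mu)$, it suffices to bound $\norm{\pi(A)f}_{L^2(\mu)}$ for such $f$. For each $k$, the pullback $\tilde f_k$ has average $c_k = \int f \, dP_{\mb{x}_{n_k}}^{\sigma_{n_k}} \to 0$ by weak convergence. Writing $\tilde f_k = c_k \mb{1} + \tilde f_k^0$ with $\tilde f_k^0 \in \ell^2_0(V_{n_k})$, and noting $L(A)\mb{1} = (\sum_\gamma a_\gamma)\mb{1}$, the orthogonal decomposition gives
\[ \norm{L(A) \tilde f_k}_{\ell^2}^2 \leq (\norm{\lambda(A)} + \varepsilon)^2 \norm{\tilde f_k}_{\ell^2}^2 + c_k^2 \abs{\textstyle\sum_\gamma a_\gamma}^2 \abs{V_{n_k}} . \]
Dividing by $\abs{V_{n_k}}$, rewriting both sides via the isometry identity for the pullback, letting $k \to \infty$ (so $c_k \to 0$, and weak convergence handles the continuous functions $\abs{f}^2$, $\abs{\pi(A)f}^2$), and then letting $\varepsilon \to 0$ yields $\norm{\pi(A)f}_{L^2(\mu)} \leq \norm{\lambda(A)} \, \norm{f}_{L^2(\mu)}$, as required.

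The principal obstacle is pinning down the precise statement from \cite{bordenave2019} that we need: a high-probability operator-norm bound on $L(A)$ restricted to $\ell^2_0(V_n)$ for arbitrary group-algebra elements $A \in \CC\Gamma$, not merely a spectral gap of the adjacency operator of a random Schreier graph. Once that statement is in hand, the remaining combination of the two high-probability events and the limiting argument above is routine.
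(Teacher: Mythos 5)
Your proof is correct and follows essentially the same route as the paper's: good models serve as a bridge to transfer Bordenave's operator-norm convergence on $\ell^2_0$ of random Schreier graphs to the Koopman representation, applied to a dense class of functions in $L^2_0(\mu)$. The only cosmetic differences are that you pull back cylinder functions directly rather than passing to a finitely-valued factor $\nu$ of $\mu$ (which the paper does, invoking that weak typicality passes to factors), and you handle the small nonzero mean of the pullback explicitly via the decomposition $\tilde f_k = c_k\mb{1} + \tilde f_k^0$ where the paper just says ``we can assume $\mb{x}\cdot\mb{1}=0$.''
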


Here $L^2_0(\mu)$ is the space of square-integrable complex-valued functions with mean zero, $\pi$ is the Koopman representation, and $\lambda$ is the left-regular representation. These representations naturally extend to the group ring $\CC \FF_r$.

An equivalent conclusion of the theorem is that the Koopman representation on $L^2_0(\mu)$ is weakly contained in the left-regular representation of $\Gamma$ \cite[Theorem~F.4.4]{bekka2008}.

\begin{proof}
	Suppose $f \in L^2_0(\mu)$ takes only finitely many distinct values, say $\B \Subset \CC$, and let $\nu \in \B^{\FF_r}$ be the factor of $\mu$ generated by $f$. Since $\mu$ is weakly typical, so is $\nu$ \cite[Lem.~4.3 and Prop.~4.10]{austin2016}. Assume that $f$ is not $\mu$-almost-surely zero.

	Let $\sigma_n \colon \FF_r \to \Sym(V_n)$ be a uniformly random homomorphism. Let $\lambda_n$ be the induced (random) unitary representation on $\ell^2(V_n)$. Write $A = \sum a_i \gamma_i \in \CC \FF_r$. 

	Given $\mb{x} \in \B^{V_n}$,
	\begin{align*}
		\norm{\lambda_n(A) \mb{x}}^2
			&= \sum_{v \in V_n} \left( \sum_{i} a_i \, \mb{x}(\sigma_n^{\gamma_i} \cdot v) \right)^2 \\
			&= \abs{V_n} \int G_A \, d P_{\mb{x}}^{\sigma_n}
	\end{align*}
	where $G_A \colon \CC^{\FF_r} \to \CC$ is given by
		\[ G_A(\mb{z}) = \left( \sum_i a_i \mb{z}(\gamma_i) \right)^2 . \]
	Since $G_A$ is continuous, there is a weak-open neighborhood $\calO \ni \nu$ such that if $\mb{x} \in \Omega(\sigma_n, \calO)$ then
		\[ \abs*{\frac{\norm{\lambda_n(A) \mb{x}}^2}{\abs{V_n}} - \int G_A\, d\nu} < \varepsilon .\]
	Since $f$ has mean 0, we can assume that $\mb{x} \cdot \mb{1} = 0$, and $\norm{\mb{x}}^2 \approx \abs{V_n} \Var(\nu_e)$. This means that if there is some $\mb{x} \in \Omega(\sigma_n, \calO)$ then
		\[ \norm{\lambda_n(A)}_0^2 \geq \frac{\norm{\lambda_n(A) \mb{x}}^2}{\norm{\mb{x}}^2} \geq \frac{1}{\Var(\nu_e)} \int G_A\, d\nu - \varepsilon = \frac{\norm{\pi(A) f}^2_{L^2(\mu)}}{\norm{f}_{L^2(\mu)}^2} - \varepsilon \]
	where $\norm{\lambda_n(A)}_0$ is the operator norm of $\lambda_n(A)$ on $\CC^{V_n}$ restricted to the orthogonal complement of the all ones vector $\mb{1}$.
	
	Now since $\nu$ is weakly typical, the probability that $\Omega(\sigma_n, \calO)$ is nonempty has limit superior 1. This proves that for any $\varepsilon>0$
		\[ \limsup_{n \to \infty} \PP\left\{ \norm{\lambda_n(A)}_0 \geq \frac{\norm{\pi(A) f}_{L^2(\mu)}}{\norm{f}_{L^2(\mu)}} - \varepsilon \right\} = 1 \]

	But \cite{bordenave2019} implies that $\norm{\lambda_n(A)}_0$ converges to $\norm{\lambda(A)}$ in probability. (The convergence for $A$ is not explicitly stated in this terminology as a numbered theorem, since $A$ is a polynomial in the generators rather than a self-adjoint linear combination. See the discussion in \cite[Section 1.3]{bordenave2019}.) So we have
		\[ \norm{\lambda(A)} \geq \frac{\norm{\pi(A) f}_{L^2(\mu)}}{\norm{f}_{L^2(\mu)}} . \]
	Since $f$ was chosen arbitrarily from a dense subspace of $L^2_0(\mu)$, this proves the theorem.
\end{proof}

To obtain from this the Kesten--Stigum bound, we now consider the averaging operators $A_m = \sum_{\gamma \in S_m} \frac{1}{\abs{S_m}} \gamma$ where $S_m = \{v \in \FF_r \st d(e,v)=m\}$. We first compute $\norm{\lambda(A_m)}$, then establish a lower bound on $\norm{\pi(A_m)}$.

\begin{prop}
\label{prop:treeavgbound}
	For every $m \in \NN$,
		\[ \norm{\lambda(A_m)}_{\ell^2(\FF_r)} = \left( m+1-\frac{m}{r}\right) \frac{1}{(2r-1)^{m/2}} . \]
\end{prop}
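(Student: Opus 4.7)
The plan is to express $\chi_m := \sum_{\gamma \in S_m} \gamma$ as a polynomial $P_m(\chi_1)$ in $\chi_1 = \sum_{\gamma \in S_1} \gamma$, and then combine the functional calculus with Kesten's theorem. Since $S_m = S_m^{-1}$, the element $\chi_1 \in \CC\FF_r$ is self-adjoint. Counting reductions of products $s\gamma$ with $s \in S_1$ gives $\chi_1^2 = \chi_2 + 2r\,\chi_0$ and $\chi_1 \chi_m = \chi_{m+1} + (2r-1)\,\chi_{m-1}$ for $m \geq 2$: a length-$(m-1)$ word $\gamma'$ arises as $s\gamma$ for exactly $2r-1$ generators $s$ (those whose inverse does not cancel the first letter of $\gamma'$), and each length-$(m+1)$ word has a unique such factorization. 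Hence $\chi_m = P_m(\chi_1)$ where $P_0 = 1$, $P_1(x) = x$, $P_2(x) = x^2 - 2r$, and $P_{m+1}(x) = x P_m(x) - (2r-1)P_{m-1}(x)$ for $m \geq 2$.

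By Kesten's theorem, together with the standard identification of the spectral measure of $\lambda(\chi_1)$, the spectrum of $\lambda(\chi_1)$ on $\ell^2(\FF_r)$ is the full interval $[-2\sqrt{2r-1},\, 2\sqrt{2r-1}]$, so the functional calculus gives $\|\lambda(\chi_m)\| = \sup_{\theta \in [0,\pi]} |P_m(2\sqrt{2r-1}\cos\theta)|$. The substitution $x = 2\sqrt{2r-1}\cos\theta$ turns the three-term recurrence into a linear one with characteristic roots $\sqrt{2r-1}\, e^{\pm i\theta}$; solving subject to the initial data above yields the closed form
\[ P_m(2\sqrt{2r-1}\cos\theta) = (2r-1)^{m/2}\left[\,2\cos(m\theta) + \frac{2(r-1)}{2r-1} \cdot \frac{\sin((m-1)\theta)}{\sin\theta}\,\right]. \]

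The final step is to show that the supremum is attained as $\theta \to 0$. The bound $|\sin(k\theta)| \leq k|\sin\theta|$ (proved by induction via the sine addition formula), together with $|\cos(m\theta)| \leq 1$, shows that the expression above has modulus at most $(2r-1)^{m/2}\bigl[2 + 2(r-1)(m-1)/(2r-1)\bigr] = 2[(r-1)m + r](2r-1)^{m/2-1}$, with equality as $\theta \to 0$. Dividing by $|S_m| = 2r(2r-1)^{m-1}$ and simplifying gives $\|\lambda(A_m)\| = (m+1-m/r)(2r-1)^{-m/2}$.

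The main subtlety is the base case $\chi_1^2 = \chi_2 + 2r\,\chi_0$: the coefficient $2r$ (rather than the $2r-1$ one would get from naively extending the generic recurrence to $m=1$) is what introduces the spherical-function correction term $\sin((m-1)\theta)/\sin\theta$ in $P_m$, and what deforms the final answer away from the $(m+1)(2r-1)^{-m/2}$ one would expect from a pure Chebyshev-of-the-second-kind analysis.
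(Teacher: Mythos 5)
Your proof is correct, and it takes a genuinely different route from the paper's.

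The paper works directly with the graph $G_m$ whose edges connect vertices of $\FF_r$ at distance $m$, and adapts the combinatorial argument of Einsiedler--Ward (Theorem 12.23) to that graph: the upper bound on $\norm{\abs{S_m}\lambda(A_m)}$ comes from a Rayleigh-quotient estimate built on a weight function $\Lambda(v,w) = (2r-1)^{(d(e,v)-d(e,w))/2}$ assigned to directed edges of $G_m$, and the lower bound from explicit near-eigenfunctions $f_N$ supported on large balls and decaying like $(2r-1)^{-d(v,e)/2}$. No spectral theory beyond the definition of operator norm is used, at the cost of running the full upper-and-lower-bound machinery once for each $m$.

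Your argument instead passes through the Hecke-algebra structure of the radial subalgebra of $\CC\FF_r$: the identities $\chi_1^2 = \chi_2 + 2r\,\chi_0$ and $\chi_1\chi_m = \chi_{m+1} + (2r-1)\chi_{m-1}$ ($m \geq 2$) are correct, and they let you write $\chi_m = P_m(\chi_1)$. You then invoke the fact that $\sigma(\lambda(\chi_1))$ is the full interval $[-2\sqrt{2r-1},\,2\sqrt{2r-1}]$ (Kesten's theorem plus the computation of the Plancherel/Kesten--McKay spectral measure), so that the continuous functional calculus for the self-adjoint operator $\lambda(\chi_1)$ yields $\norm{\lambda(\chi_m)} = \sup_{\theta}\abs{P_m(2\sqrt{2r-1}\cos\theta)}$. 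Your closed form $Q_m(\theta) = (2r-1)^{m/2}\bigl[2\cos(m\theta) + \tfrac{2(r-1)}{2r-1}\tfrac{\sin((m-1)\theta)}{\sin\theta}\bigr]$ checks out against the initial data $Q_1, Q_2$ (noting correctly that the recurrence starts at $m\geq 2$, so $Q_1,Q_2$ are the right initial conditions, not $Q_0,Q_1$), the bound $\abs{\sin(k\theta)}\leq k\abs{\sin\theta}$ shows the supremum is achieved in the limit $\theta \to 0$, and the arithmetic $(r-1)m + r = r(m+1)-m$ together with dividing by $\abs{S_m} = 2r(2r-1)^{m-1}$ gives exactly the stated value. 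You also correctly pinpoint why the answer is $m+1 - m/r$ rather than $m+1$: the anomalous coefficient $2r$ in the base relation $\chi_1^2 = \chi_2 + 2r\chi_0$ forces the spherical-function correction $\sin((m-1)\theta)/\sin\theta$, deforming the pure Chebyshev behavior.

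In short: the paper redoes the Kesten-type estimate from scratch for each $m$; you reduce to the $m=1$ case once and use algebra and spectral theory for the rest. Your version is shorter and perhaps more conceptual, at the price of importing the spectral measure of $\lambda(\chi_1)$ as a black box rather than deriving what you need by hand.
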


\begin{proof}
	$\abs{S_m} \lambda(A_m)$ is essentially the sum operator of \cite[Section 12.2]{einsiedler2017}, but on a different graph: let $G_m$ be the graph with vertex set $\FF_r$ and an edge between every pair of vertices whose distance in $\FF_r$ is exactly $m$ (deleting the original edges).
	
	Within the proof of the upper bound of \cite[Theorem 12.23]{einsiedler2017}, the following is shown: if $\Lambda$ is a positive-valued function of the directed edges of $G_m$ such that $\Lambda(v,w) = (\Lambda(w,v))^{-1}$ then
		\[ \norm{ \abs{S_m} \lambda(A_m)}_{\ell^2} \leq \rho \coloneqq \sup_{v \in \FF_r} \sum_{w \sim_{G_m} v} \Lambda(v,w) \]
	where the sum is over vertices adjacent to $v$ in $G_m$.
	
	Our desired upper bound will come from taking
		\[ \Lambda(v,w) = (2r-1)^{(d(e,v) - d(e,w))/2} \]
	where $e \in \FF_r$ is the identity and the distance here refers to the original graph structure on $\FF_r$. We will bound $\sum_{w \sim v} \Lambda(v,w)$ by breaking up the sum according to $v$ and $w$'s nearest common ancestor (again, in the original graph): we say their nearest ancestor is $i$ above $v$ if the point on the simple path from $v$ to $w$ which comes closest to $e$ is $i$ units closer to $e$ than $v$ is. Breaking up the sum according to $i$ in this way, we get
	\begin{align*}
		\sum_{w \sim v} \Lambda(v,w)
			&= \sum_{i=0}^m \sum_{w \st \cdots} \Lambda(v,w) \\
			&= \sum_{i=0}^m \abs{\{w \st \cdots\}} \,(2r-1)^{(2i-m)/2}
	\end{align*}
	We bound this by noting that the set of $w$ whose nearest ancestor with $v$ is $i$ above $v$ and whose distance from $v$ is $n$ is a subset of the vertices which are $(m-i)$th generation descendants of that common ancestor. First suppose $v$ is not the identity. If $i=0$, there are $(2r-1)^{m}$ of them; otherwise there are at most $(2r-2)(2r-1)^{m-i-1}$ (or none if $v$ is closer than distance $i$ to the identity). This gives
	\begin{align*}
		\sum_{w \sim v} \Lambda(v,w)
			&\leq (2r-1)^{m} (2r-1)^{(-m)/2} + \sum_{i=1}^{m-1} (2r-2)(2r-1)^{m-i-1} \, (2r-1)^{(2i-m)/2} + (2r-1)^{m/2}\\
			&= (m+1-\tfrac{m-1}{2r-1})(2r-1)^{m/2} .
	\end{align*}
	If $v$ is the identity, then we can compute
		\[ \sum_{w \sim e} \Lambda(e,w) = 2r(2r-1)^{m-1} (2r-1)^{-m/2} = \tfrac{2r}{2r-1} (2r-1)^{m/2} \]
	but for $r \geq 2$ this is smaller than the upper bound in the $v \ne e$ case for all $m \geq 1$. So we get $\rho \leq (m+1-\tfrac{m-1}{2r-1})(2r-1)^{m/2}$,
	which implies the claimed upper bound on $\norm{\lambda(A_m)}$.
	
	We now prove the lower bound, again based on the $m=1$ case in \cite[Theorem~12.23]{einsiedler2017}. To simplify notation let $b=2r-1$ denote the branching factor. For the remainder of the proof we only consider the original, standard graph structure on $\FF_r$. For each $N \geq 2m$ let $f_N \in \ell^2(\FF_r)$ be given by
		\[ f(v) = \left\{ \begin{array}{ll}
			b^{-\frac{1}{2}d(v,e)}, & d(v,e) \leq N \\
			0, & \text{else}. \end{array} \right. \]
	A straightforward calculation shows
		\[ \norm{f_N}_{\ell^2}^2 = 1 + N\frac{b+1}{b} , \]
	and if $d(v,e) = n$ where $m \leq n \leq N-m$ then
		\[ \abs{S_m} \lambda(A_m)f(v) = b^{-n/2} b^{m/2} (m+1 - \tfrac{m-1}{b}) \]
	so
	\begin{align*}
		\norm[\big]{\abs{S_m} \lambda(A_m) f}_{\ell^2}^2
			&> \sum_{n=m}^{N-m}\left( b^{-n/2} b^{m/2} (m+1 - \tfrac{m-1}{b}) \right)^2 \abs{\{v \st d(v,e) = n\}} \\
			&= b^m (m+1 - \tfrac{m-1}{b})^2 (N - 2m+1) \tfrac{b+1}{b},
	\end{align*}
	and
		\[ \abs{S_m} \norm{\lambda(A_m)}_{\ell^2} \geq b^{m/2} (m+1 - \tfrac{m-1}{b}) \sqrt{\frac{N-2m+1}{\frac{b}{b+1}+N}}. \]
	Taking $N \to \infty$ gives the lower bound.
\end{proof}

The following lemma will be used to get a lower bound on $\norm{\pi(A_m)}_{L^2_0(\mu)}$ in terms of the second-largest eigenvalue.
\begin{lemma}
\label{lem:linalg}
	Suppose $M$ is a $d \times d$ column-stochastic matrix, let $\theta$ be its second-largest eigenvalue, and let $p$ be a stationary probability vector. Then $M$ has a left $\theta$-eigenvector $f \in \CC^d$ orthogonal to $p$.
\end{lemma}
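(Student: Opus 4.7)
The plan is to produce a left $\theta$-eigenvector of $M$, equivalently a right $\theta$-eigenvector of $M^T$, inside the linear subspace $W = \{v \in \CC^d \st v^T p = 0\}$. The key observation is that $W$ is $M^T$-invariant: if $v \in W$ then $(M^T v)^T p = v^T (M p) = v^T p = 0$, using the stationarity $Mp = p$. Moreover, since $M$ is column-stochastic we have $M^T \mathbf{1} = \mathbf{1}$, and $\mathbf{1} \notin W$ because $\mathbf{1}^T p = 1 \ne 0$. Thus $\CC^d = \CC \mathbf{1} \oplus W$ is a direct sum decomposition into $M^T$-invariant subspaces.

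From this decomposition the characteristic polynomial of $M$ (equivalently of $M^T$) factors as $\chi_M(\lambda) = (\lambda - 1) \cdot \chi_{M^T|_W}(\lambda)$. Consequently, the spectrum of $M^T|_W$, counted with algebraic multiplicity, is the spectrum of $M$ with exactly one copy of the eigenvalue $1$ removed.

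Now let $\theta$ be the second-largest eigenvalue of $M$ in absolute value. If $\theta \ne 1$, it appears in the spectrum of $M^T|_W$ with the same algebraic multiplicity as in $M$, so there is a vector $f \in W$ with $M^T f = \theta f$. If instead $\theta = 1$, then $1$ has algebraic multiplicity at least $2$ in $M$ and therefore multiplicity at least $1$ in $M^T|_W$, again yielding the required eigenvector in $W$. Transposing gives the desired left eigenvector of $M$ orthogonal to $p$.

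The only subtlety worth flagging is the case $\theta = 1$, where one might initially worry about a Jordan block obstructing the existence of a second $1$-eigenvector; the direct sum argument sidesteps this by tracking algebraic multiplicities on $\chi_{M^T|_W}$ rather than constructing eigenvectors directly. (Alternatively, one could cite that the eigenvalue $1$ of any stochastic matrix is semisimple because $\{M^n\}$ is norm-bounded, but this is not needed.)
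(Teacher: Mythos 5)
Your proof takes essentially the same approach as the paper: you form the decomposition $\CC^d = \CC\mathbf{1} \oplus \{v : v^T p = 0\}$, observe that both summands are invariant under $M^T$, and conclude that $\theta$ is an eigenvalue of $M^T$ restricted to the second summand. The only difference is one of explicitness: you spell out the characteristic polynomial factorization and the case $\theta = 1$ (handled in the paper only by a remark after the lemma), which is a nice bit of care but not a different argument.
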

\begin{proof}
	Let $p \in \RR^d$ be a stationary probability vector for $M$, or in other words a right $1$-eigenvector whose entries sum to 1. Let $U \subset \CC^d$ be the orthogonal complement of $p$. Since $U$ has codimension 1 and $\mathbf{1} \not\in U$, $\CC^d = U \oplus \vspan\{\mathbf{1}\}$.
	
	Since $x^T M p = x^Tp$, multiplication by $M$ on the right preserves this decomposition of $\CC^d$. This implies that $M$ has a left $\theta$-eigenvector in $U$.
\end{proof}

Note: if $1$ has algebraic multiplicity greater than 1 as an eigenvalue of $M$, then we can take $\theta=1$.

\begin{prop}
\label{prop:repopbound}
	Suppose $\mu$ is a completely homogeneous Markov chain on $\FF_r$ with transition matrix $M$, and let $\theta$ be the second-largest eigenvalue of $M$. Then for every $m \in \NN$,
	\[ \norm{\pi(A_m)}_{L^2_0(\mu)} \geq  \abs{\theta}^m . \]
\end{prop}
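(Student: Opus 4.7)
The plan is to realize the claimed lower bound using an explicit test function. Applying Lemma~\ref{lem:linalg} to $M$, let $\varphi \in \CC^\A$ be a left $\theta$-eigenvector of $M$ orthogonal to the stationary distribution $p$. Set $f(\mathbf{x}) = \varphi(\mathbf{x}(e))$. The orthogonality $\varphi \perp p$ gives $\EE_\mu[f] = 0$, so $f \in L^2_0(\mu)$; its squared norm is $\|f\|^2 = \sum_a p_a |\varphi_a|^2$.

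Next, I show that for all $\gamma_1, \gamma_2 \in \FF_r$,
\[ \bigl\langle \pi(\gamma_1) f,\ \pi(\gamma_2) f \bigr\rangle_{L^2(\mu)} = \theta^{d(\gamma_1, \gamma_2)} \|f\|^2. \]
The tree-Markov property together with complete homogeneity yields $\EE[\varphi(\mathbf{x}(\gamma_2)) \mid \mathbf{x}(\gamma_1)] = \theta^{d(\gamma_1,\gamma_2)} \varphi(\mathbf{x}(\gamma_1))$: conditional on $\mathbf{x}(\gamma_1)=a$, the value $\mathbf{x}(\gamma_2)$ is distributed as the $d(\gamma_1,\gamma_2)$-step Markov transition from $a$, and iterating the left-eigenvector relation produces the factor $\theta^{d(\gamma_1,\gamma_2)}$. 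Multiplying by $\overline{\varphi(\mathbf{x}(\gamma_1))}$ and taking expectation gives the formula. Reversibility of the chain (forced by the symmetric edge marginal) ensures $\theta$ is real, so the complex conjugation is harmless.

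Expanding $\pi(A_m) = |S_m|^{-1} \sum_{\gamma \in S_m} \pi(\gamma)$ then gives
\[ \frac{\|\pi(A_m) f\|^2}{\|f\|^2} = \frac{1}{|S_m|^2} \sum_{\gamma_1, \gamma_2 \in S_m} \theta^{d(\gamma_1, \gamma_2)} . \]
I partition $S_m \times S_m$ by the depth $j \in \{0, 1, \ldots, m\}$ at which the geodesics from $e$ to $\gamma_1$ and $\gamma_2$ first diverge; such a pair has $d(\gamma_1, \gamma_2) = 2(m-j)$, and a routine tree count gives the number of pairs at each depth.

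The proof then reduces to the algebraic inequality
\[ 1 + (2r-2) \sum_{k=1}^{m-1} (2r-1)^{k-1}\theta^{2k} + (2r-1)^m \theta^{2m} \ \geq\ 2r(2r-1)^{m-1}\theta^{2m}, \]
which I expect to be the main computation. Summing the geometric series on the left-hand side, the difference between the two sides simplifies to $(1-\theta^2) \sum_{k=0}^{m-1} \bigl((2r-1)\theta^2\bigr)^k$, which is nonnegative since $|\theta| \leq 1$.
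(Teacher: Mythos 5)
Your proof is correct, and it takes a genuinely different route from the paper's.  You and the paper pick the same test function $F(\mathbf{z}) = \varphi(\mathbf{z}(e))$ built from a left $\theta$-eigenvector orthogonal to $p$ (Lemma~\ref{lem:linalg}).  Where you diverge is in how you convert the test function into a lower bound on the operator norm.  The paper exploits self-adjointness of $\pi(A_m)$ to use the numerical-range inequality $\norm{\pi(A_m)} \geq \abs{\langle F,\ \pi(A_m) F\rangle}/\norm{F}^2$, and a single covariance computation for one pair $(e,\gamma)$ with $d(e,\gamma)=m$ gives $\langle F, \pi(A_m)F\rangle = \overline{\theta^m}\norm{F}^2$ on the nose, so the absolute value yields $|\theta|^m$ immediately with no further inequalities.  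You instead bound $\norm{\pi(A_m)} \geq \norm{\pi(A_m)F}/\norm{F}$, which requires expanding $\norm{\pi(A_m)F}^2$ as a double sum of covariances $\theta^{d(\gamma_1,\gamma_2)}$ over $S_m \times S_m$, counting pairs by the depth at which the geodesics split, and finishing with the algebraic inequality you state.  That inequality is correct and does telescope as you claim: with $b=2r-1$ and $t=\theta^2$, the difference of the two sides is $(1-t)\sum_{k=0}^{m-1}(bt)^k \geq 0$ since $|\theta|\leq 1$.  Both approaches work, and yours in fact produces a slightly sharper intermediate bound (the $\ell^2$-norm of $\pi(A_m)F$ rather than the Rayleigh quotient), but it pays for that with the tree-counting lemma the paper avoids.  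One point worth making explicit, since your geometric-series step needs $\theta$ real and in $[-1,1]$: as you note, complete homogeneity of the chain forces detailed balance $p(a)M(a,b)=p(b)M(b,a)$, so $M$ is conjugate to a symmetric matrix and its spectrum is real; the paper's version, by conjugating inside $\langle F,\cdot F\rangle$ and taking absolute values at the end, is formally agnostic to this but still relies on the same reversibility to express the two-point marginal as $p(\ta_1)M^m_{\ta_2,\ta_1}$.
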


\begin{proof}
	Let $p \in \RR^{\A}$ be the single-site marginal of $\mu$. Since $p$ is a right $1$-eigenvector of $M$, by Lemma~\ref{lem:linalg} we can pick some $f \in \CC^{\A}$ which is a left $\theta$-eigenvector of $M$ and orthogonal to $p$.
	
	Define $F \in L^2_0(\mu)$ by $F(\mb{z}) = f(\mb{z}(e))$; note $F$ has mean zero since $f$ is orthogonal to $p$. Multiplying $f$ by a scalar if necessary, we can also assume $\norm{F}_{L^2(\mu)} = \Var_p(f) = 1$. Since $\pi(A_m)$ is self-adjoint,
		\[ \norm{\pi(A_m)}_{L^2_0(\mu)} \geq \abs{\langle F,\ \pi(A_m)F\rangle} = \abs*{\frac{1}{\abs{S_m}} \sum_{\gamma \in S_m} \int f(\mb{z}(e))\, \overline{f(\mb{z}(\gamma))}\, \mu(d\mb{z})} . \]
	Since $\mu$ is completely homogenous, every term in the sum has the same value. If $\gamma \in \FF_r$ is distance $m$ from the identity,
	\begin{align*}
		\int f(\mb{z}(e))\, \overline{f(\mb{z}(\gamma))}\, \mu(d\mb{z})
			&= \sum_{\ta_1, \ta_2 \in \A} f(\ta_1) \overline{f(\ta_2)} \, \PP_{\mb{z} \sim\mu}\{\mb{z}(e) = \ta_1,\ \mb{z}(\gamma) = \ta_2\} \\
			&= \sum_{\ta_1, \ta_2 \in \A} f(\ta_1) \overline{f(\ta_2)}  \, p(\ta_1) M^m_{\ta_2, \ta_1} \\
			&= \sum_{\ta_1 \in \A} f(\ta_1) p(\ta_1) \sum_{\ta_2 \in \A} \overline{f(\ta_2)}M^m_{\ta_2, \ta_1} .
	\end{align*}
	Now since $M$ has real entries and $f$ is a left $\theta$-eigenvector,	
		\[ \sum_{\ta_2 \in \A} \overline{f(\ta_2)}M^m_{\ta_2, \ta_1} = \overline{\sum_{\ta_2 \in \A} f(\ta_2) M^m_{\ta_2, \ta_1} } = \overline{\theta^m f(\ta_1)} . \]
	So since $\Var_p(f) = 1$ and $\EE_p(f) = 0$
		\[ \int f(\mb{z}(e))\, f(\mb{z}(\gamma))\, \mu(d\mb{z}) = \overline{\theta^m} \sum_{\ta_1 \in \A} f(\ta_1) \overline{f(\ta_1)} p(\ta_1) = \overline{\theta^m} . \]
	Altogether this gives
		\[ \norm{\pi(A_m)}_{L^2_0(\mu)} \geq \abs{\overline{\theta^m}} = \abs{\theta}^m \]
	as claimed.
\end{proof}

We can now prove the main result of this section:

\begin{proof}[Proof of Theorem~\ref{thm:KSbound}]
We show that if a Markov chain $\mu$ on $\FF_r$ is weakly typical, the preceding results in this section imply that $\abs{\theta}^2(2r-1) \leq 1$.
		
	If $\mu$ is weakly typical then the above implies that for every $m \in \NN$
		\[ \abs{\theta}^m
			\overset{\text{Prop.}~\ref{prop:repopbound}}{\leq} \norm{\pi(A_m)}_{L^2_0(\mu)}
			\overset{\text{Thm.}~\ref{thm:repopbound}}{\leq} \norm{\lambda(A_m)}_{\ell^2(\FF_r)}
			\overset{\text{Prop.}~\ref{prop:treeavgbound}}{=} \left( m+1-\frac{m}{r}\right) \frac{1}{(2r-1)^{m/2}} , \]
	so
		\[ \abs{\theta} \leq \lim_{m \to \infty} \left( \left( m+1-\frac{m}{r}\right) \frac{1}{(2r-1)^{m/2}} \right)^{1/m} = \frac{1}{\sqrt{2r-1}} \]
	which is the claimed bound on $\theta$.
\end{proof}


\section{Typical upper and lower sofic entropy values: Proof of Theorem~\ref{thm:main}}
\label{sec:typicalproof}

Recall that we have a sequence of finite sets $\{V_n\}_{n=1}^\infty$ and for each $n$ a probability distribution on $\Hom(\Gamma,\Sym(V_n))$, such that the sequence of these probability distributions forms an exponentially concentrated random sofic approximation.

Let $\gamma_n \in \Hom(\Gamma,\Sym(V_n))$ denote a random homomorphism whose law is part of this sequence. 

Recall that there is then a sequence of sets of homomorphisms $\{ S_n^\sofic \subset \Hom(\Gamma, \Sym(V_n)) \}_{n=1}^\infty$ with $\PP(S_n^\sofic) \to 1$ such that if $\sigma_n \in S_n$ for each $n$ then $\Sigma = \{\sigma_n \}_{n=1}^\infty$ is a sofic approximation to $\Gamma$. All of the work in this section will go into using exponential concentration to produce another sequence $\{S'_n\}_{n=1}^\infty$ of sets along which the number of good models is controlled. We will then intersect that sequence with $S_n^{\sofic}$. \\

	
	For $\mu \in \Prob^\Gamma(\A^\Gamma)$, $n \in \NN$ and $\varepsilon,\delta>0$ we define the following four sets of ``bad'' homomorphisms in $\Hom(\Gamma, \Sym(V_n))$, which we write as events in terms of $\gamma_n$: if $h^\sharp(\mu)$ and $h^\flat(\mu)$ are finite, then
	\begin{align*}
		\mathcal{F}^{\geq,\sharp}(\mu,\varepsilon,\delta,n)
			&= \left\{ \frac{1}{\abs{V_n}} \log \abs*{\Omega(\gamma_n, \mu, \varepsilon)} \geq h^\sharp + \delta \right\} \\
		\mathcal{F}^{\geq,\flat}(\mu,\varepsilon,\delta,n)
			&= \left\{ \frac{1}{\abs{V_n}} \log \abs*{\Omega(\gamma_n, \mu, \varepsilon)} \geq h^\flat + \delta \right\}  \\
		\mathcal{F}^{\leq,\sharp}(\mu,\varepsilon,\delta,n)
			&= \left\{ \frac{1}{\abs{V_n}} \log \abs*{\Omega(\gamma_n, \mu, \varepsilon)} \leq h^\sharp - \delta \right\} \\
		\mathcal{F}^{\leq,\flat}(\mu,\varepsilon,\delta,n)
			&= \left\{ \frac{1}{\abs{V_n}} \log \abs*{\Omega(\gamma_n, \mu, \varepsilon)} \leq h^\flat - \delta \right\} .
	\intertext{If $h^\flat(\mu) = -\infty$ then we set $\mathcal{F}^{\leq,\flat}(\mu,\varepsilon,\delta,n) = \varnothing$ and in the definition of $\mathcal{F}^{\geq,\flat}(\mu,\varepsilon,\delta,n)$ we change $h^\flat(\mu) + \delta$ to $0$. Make similar changes if $h^\sharp(\mu) = -\infty$.}
	\end{align*}
	We first show that all of these sets have small probability:
	\begin{enumerate}
		\item For each $\delta > 0$ and small enough (depending on $\delta$) $\varepsilon > 0$ there exists $\xi$ such that for all large $n$
			\[ \PP \left( \mathcal{F}^{\geq,\sharp}(\mu,\varepsilon,\delta,n) \right) < e^{-\xi \abs{V_n}} . \]
		\emph{Proof:}
		\begin{enumerate}
			\item in the case $h^\sharp \geq 0$: by definition of $h^\sharp$ we have $\underline\rho_\mu(h^\sharp + \delta) > 0$, and
			\[ \mathcal{F}^{\geq,\sharp}(\mu,\varepsilon,\delta,n) = \left\{ \frac{1}{\abs{V_n}} \log \abs*{\Omega(\gamma_n, \mu, \varepsilon)} \geq h^\sharp + \delta \right\} \subseteq \left\{ \rho_\mu(\gamma_n, h^\sharp + \delta) \leq \varepsilon \right\} \]
		so if $\varepsilon$ is small compared to $\underline\rho_\mu(h^\sharp + \delta)$ then this is exponentially unlikely for all large $n$ by Lemma~\ref{lem:rhoconcentration}. 
			\item in the case $h^\sharp = -\infty$: by definition of $h^\sharp$ we have $\underline\rho_\mu(0) > 0$, and
			\[ \mathcal{F}^{\geq,\sharp}(\mu,\varepsilon,\delta,n) = \left\{ \frac{1}{\abs{V_n}} \log \abs*{\Omega(\gamma_n, \mu, \varepsilon)} \geq 0 \right\} \subseteq \left\{ \rho_\mu(\gamma_n, 0) \leq \varepsilon \right\} \]
		so if $\varepsilon$ is small compared to $\underline\rho_\mu(0)$ then this is exponentially unlikely for all large $n$. 
		\end{enumerate}
		
		\item For each $\delta > 0$ and small enough (depending on $\delta$) $\varepsilon > 0$ there exists $\xi$ such that for infinitely many $n$
			\[ \PP \left( \mathcal{F}^{\geq,\flat}(\mu,\varepsilon,\delta,n) \right) < e^{-\xi \abs{V_n}} . \]
		\emph{Proof:}
		\begin{enumerate}
			\item if $h^\flat \geq 0$: by definition of $h^\flat$ we have $\overline\rho_\mu(h^\flat + \delta) > 0$, and
			\[ \mathcal{F}^{\geq,\flat}(\mu,\varepsilon,\delta,n) = \left\{ \frac{1}{\abs{V_n}} \log \abs*{\Omega(\gamma_n, \mu, \varepsilon)} \geq h^\flat + \delta \right\} \subseteq \left\{ \rho_\mu(\gamma_n, h^\flat + \delta) \leq \varepsilon \right\} \]
		so if $\varepsilon$ is small compared to $\overline\rho_\mu(h^\flat + \delta)$ then this is exponentially unlikely for infinitely many $n$. 
			\item if $h^\flat = -\infty$: by definition of $h^\flat$ we have $\overline\rho_\mu(0) > 0$, and
			\[ \mathcal{F}^{\geq,\flat}(\mu,\varepsilon,\delta,n) = \left\{ \frac{1}{\abs{V_n}} \log \abs*{\Omega(\gamma_n, \mu, \varepsilon)} \geq 0 \right\} \subseteq \left\{ \rho_\mu(\gamma_n, 0) \leq \varepsilon \right\} \]
		so if $\varepsilon$ is small compared to $\overline\rho_\mu(0)$ then this is exponentially unlikely for infinitely many $n$.
		\end{enumerate}
		
		\item For each $\varepsilon, \delta > 0$ there exists $\xi$ such that for infinitely many $n$
			\[ \PP \left( \mathcal{F}^{\leq,\sharp}(\mu,\varepsilon,\delta,n) \right) \leq e^{-\xi \abs{V_n}} . \]
		\emph{Proof:}
		\begin{enumerate}
			\item if $h^\sharp \geq 0$: by definition of $h^\sharp$ we have $\underline\rho_\mu(h^\sharp - \delta) = 0$, and
			\[ \mathcal{F}^{\leq,\sharp}(\mu,\varepsilon,\delta,n) = \left\{ \frac{1}{\abs{V_n}} \log \abs*{\Omega(\gamma_n, \mu, \varepsilon)} \leq h^\sharp - \delta \right\} \subseteq \left\{ \rho_\mu(\gamma_n, h^\sharp - \tfrac{\delta}{2}) \geq \varepsilon \right\} \]
		which is exponentially unlikely for infinitely many $n$.
			\item if $h^\sharp = -\infty$: $\mathcal{F}^{\leq,\sharp}(\mu,\varepsilon,\delta,n)$ is defined to be empty, so the bound holds for all $n$ for any $\xi$.
		\end{enumerate}
		
		\item For each $\varepsilon, \delta > 0$ there exists $\xi$ such that for all large $n$
			\[ \PP \left( \mathcal{F}^{\leq,\flat}(\mu,\varepsilon,\delta,n) \right) \leq e^{-\xi \abs{V_n}} . \]
		\emph{Proof:}
		\begin{enumerate}
			\item if $h^\flat \geq 0$: by definition of $h^\flat$ we have $\overline\rho_\mu(h^\flat - \delta) = 0$, and
			\[ \mathcal{F}^{\leq,\flat}(\mu,\varepsilon,\delta,n) = \left\{ \frac{1}{\abs{V_n}} \log \abs*{\Omega(\gamma_n, \mu, \varepsilon)} \leq h^\flat - \delta \right\} \subseteq \left\{ \rho_\mu(\gamma_n, h^\flat - \tfrac{\delta}{2}) \geq \varepsilon \right\} \]
		which is exponentially unlikely for all large $n$. 
			\item if $h^\flat = -\infty$: $\mathcal{F}^{\leq,\flat}(\mu,\varepsilon,\delta,n)$ is defined to be empty, so the bound holds for all $n$ for any $\xi$.
		\end{enumerate}
	\end{enumerate}
	
	Pick a sequence $(\delta_m)_{m \in \NN}$ decreasing with limit 0. For each $m$, for every $\mu \in \Prob^\Gamma(\A^\Gamma)$ for small enough $\varepsilon>0$
	\begin{enumerate}[\indent(a)]
		\item the bounds in items 1 and 2 above hold
		\item for every $\nu \in \ball{\mu}{\varepsilon}$, $h^\sharp(\nu)$ and $h^\flat(\nu)$ are less than $h^\sharp(\mu) + \delta_m$ and $h^\flat(\mu) + \delta_m$, respectively (or less than 0 if $h^\sharp(\mu)$ or $h^\flat(\mu)$ is $-\infty$). This uses upper semicontinuity of $h^\flat$ and $h^\sharp$.
	\end{enumerate}
	Then, by compactness, for each $m$ we can pick a a finite set of measures $\mu_{m,1}, \ldots, \mu_{m,K_m}$ with corresponding $\varepsilon_{m,1}, \ldots, \varepsilon_{m,K_m}$ such that $\{ \ball{\mu_{m,k}}{\varepsilon_{m,k}}\}_{k=1}^{K_m}$ covers $\Prob^\Gamma(\A^\Gamma)$. We can also require that $\limsup_{m \to \infty} \max\{\varepsilon_{m,k} \st 1 \leq k \leq K_m\} = 0$.
	
	For $\diamond \in \{\leq, \geq\}$  and $\star \in \{ \flat, \sharp\}$ let $\calE^{\diamond,\star}_{m,k,n} = \mathcal{F}^{\diamond,\star}(\mu_{m,k}, \varepsilon_{m,k}, \delta_m, n)$, except for $(\leq,\sharp)$ and $(\geq,\flat)$: in those cases we take this definition for those infinitely many $n$ where the exponentially decaying bound on their probability holds and define the sets to be empty otherwise.
	
	For each $m$ let $\xi_m$ be such that $\PP(\calE^{\diamond,\star}_{m,k,n}) < e^{-\xi_m \abs{V_n}}$ for all large enough $n$ and all choices of $\diamond \in \{\leq,\geq\}$, $\star \in \{\flat, \sharp\}$ and all $k = 1, \ldots, K_m$.
	
	
	Choose an increasing sequence $N_m$ so that for all $n > N_m$
	\begin{align*}
		\frac{1}{m}
			&\geq 4 \sum_{i=1}^m K_i e^{-\xi_i \abs{V_n}}
		\intertext{and}
		\PP \left[ \calE^{\diamond,\star}_{i,k,n} \right]
			&\leq e^{-\xi_i \abs{V_n}} \quad \text{ for all } i \in [m],\ \diamond \in \{\leq,\geq\},\ \star \in \{\flat, \sharp \},\ k \in [K_i] . 
	\end{align*}
	Now let
		\[ \calU_{m,n} = \left( \bigcup_{i=1}^m \bigcup_{k=1}^{K_i} \calE^{\geq,\flat}_{i,k,n} \cup \calE^{\geq,\sharp}_{i,k,n} \cup\calE^{\leq,\flat}_{i,k,n} \cup\calE^{\leq,\sharp}_{i,k,n} \right)^c . \]
	For all $n > N_m$ we have
		\[ \PP \left( \calU_{m,n} \right) \geq  1 - 4 \sum_{i=1}^m K_i e^{-\xi_i \abs{V_n}}\geq 1 - \frac{1}{m} . \]
		
	Now for each $m$ and each $n \in (N_m, N_{m+1}]$ let $S_n' = \calU_{m,n}$. Note that for each $m,n$ we have $\calU_{m,n} \supset \calU_{m+1,n}$. So if $\Sigma = \{\sigma_n\}_{n = 1}^\infty$ is such that $\sigma_n \in S_n'$ for each $n$, this ensures that for each $m$, for all $n > N_m$ we have $\sigma_n \in \calU_{m,n}$. In particular, for any $\mu$:
	\begin{enumerate}
		\item Given $m \in \NN$ pick $k \in [K_m]$ such that $\mu \in \ball{\mu_{m,k}}{\varepsilon_{m,k}}$. For all $n > N_m$ we have $\sigma_n \not\in \calE^{\geq,\sharp}_{m,k,n}$, so for all large $n$ we have $\sigma_n \not\in \calF^{\geq,\sharp}(\mu_{m,k}, \varepsilon_{m,k}, \delta_m, n)$. Hence if $\varepsilon$ is small enough that $\ball{\mu}{\varepsilon} \subset \ball{\mu_{m,k}}{\varepsilon_{m,k}}$ then
			\[ \limsup_{n \to \infty} \frac{1}{\abs{V_n}} \log \abs*{\Omega(\sigma_n, \mu, \varepsilon)} \leq  \limsup_{n \to \infty} \frac{1}{\abs{V_n}} \log \abs*{\Omega(\sigma_n, \mu_{m,k}, \varepsilon_{m,k})} \leq h^\sharp(\mu_{m,k}) + \delta_m , \]
		where we take $-\infty + \delta_m = -\infty$.
		Taking $\varepsilon\to 0$ then gives
			\[ \overline\h_\Sigma(\mu) \leq h^\sharp(\mu_{m,k}) + \delta_m, \]
		and taking the lim sup as $m \to \infty$ gives $\overline\h_\Sigma(\mu) \leq h^\sharp(\mu)$ by upper semicontinuity of $h^\sharp$.
		
		If $h^\sharp(\mu) = -\infty$, then this gives $\overline\h_\Sigma(\mu) = h^\sharp(\mu)$.
		
		\item For each $m$, pick $k$ as in part 1. For all $n > N_m$ we have $\sigma_n \not\in \calE^{\geq,\flat}_{m,k,n}$. So for infinitely many $n$ we have $\sigma_n \not\in \calF^{\geq,\flat}(\mu_{m,k}, \varepsilon_{m,k}, \delta_m, n)$, and if $\varepsilon$ is small enough
			\[ \liminf_{n \to \infty} \frac{1}{\abs{V_n}} \log \abs*{\Omega(\sigma_n, \mu, \varepsilon)} \leq h^\flat(\mu_{m,k}) + \delta_m . \]
		Taking $\varepsilon \to 0$ then $m \to \infty$ gives $\underline\h_\Sigma(\mu) \leq h^\flat(\mu)$, by upper semicontinuity of $h^\flat$.
		
		If $h^\flat(\mu) = -\infty$, then this gives $\underline\h_\Sigma(\mu) = h^\flat(\mu)$.
		
		\item Given $\varepsilon>0$, pick $m$ large enough that if we pick $k$ as in part 1 then $\ball{\mu_{m,k}}{\varepsilon_{m,k}} \subset \ball{\mu}{\varepsilon}$; this is possible because we required all $\varepsilon_{m,k}$ to be uniformly small for large $m$, so whichever $\ball{\mu_{m,k}}{\varepsilon_{m,k}}$ contains $\mu$ must eventually be contained in $\ball{\mu}{\varepsilon}$. This ensures that $h^\sharp(\mu) \leq h^\sharp(\mu_{m,k}) + \delta_m$.
		
		For all $n > N_m$ we have $\sigma_n \not\in \calE^{\leq,\sharp}_{m,k,n}$. So for infinitely many $n$ we have $\sigma_n \not\in \calF^{\leq,\sharp}(\mu_{m,k}, \varepsilon_{m,k}, \delta_m, n)$, and
			\begin{align*}
				\limsup_{n \to \infty} \frac{1}{\abs{V_n}} \log \abs*{\Omega(\sigma_n, \mu, \varepsilon)}
					&\geq \limsup_{n \to \infty} \frac{1}{\abs{V_n}} \log \abs*{\Omega(\sigma_n, \mu_{m,k}, \varepsilon_{m,k})} \\
					&\geq h^\sharp(\mu_{m,k}) - \delta_m \\
					&\geq h^\sharp(\mu) - 2\delta_m.
			\end{align*}
		Taking $m \to \infty$ then $\varepsilon \to 0$ gives $\overline\h_\Sigma(\mu) \geq h^\sharp(\mu)$.
		
		\item Given $\varepsilon>0$, pick $m$ as in part 3. For all $n > N_m$ we have $\sigma_n \not\in \calE^{\leq,\flat}_{m,k,n}$. So for all large $n$ we have $\sigma_n \not\in \calF^{\leq,\flat}(\mu_{m,k}, \varepsilon_{m,k}, \delta_m, n)$, and
		\begin{align*}
			\liminf_{n \to \infty} \frac{1}{\abs{V_n}} \log \abs*{\Omega(\sigma_n, \mu, \varepsilon_m)}
				&\geq \liminf_{n \to \infty} \frac{1}{\abs{V_n}} \log \abs*{\Omega(\sigma_n, \mu_{m,k}, \varepsilon_{m,k})} \\
				&\geq h^\flat(\mu_{m,k}) - \delta_m \\
				&\geq h^\flat(\mu) - 2\delta_m.
		\end{align*}
		Taking $m \to \infty$ then $\varepsilon \to 0$ gives $\underline\h_\Sigma(\mu) \geq h^\flat(\mu)$.
	\end{enumerate}

	Finally, if we take $S_n = S_n' \cap S_n^\sofic$ for each $n$, then $\{S_n\}_{n=1}^\infty$ is a sequence with the desired property.
\printbibliography

\end{document}